\newtheoremstyle{definition}{1.0em}{1.0em}{\itshape}{}{\bfseries}{.}{0.5em}{}
\newtheoremstyle{satz}{1.0em}{0.5em}{\itshape}{}{\bfseries}{.}{0.5em}{}
\theoremstyle{definition}
\newtheorem{defn}{Definition}[section]
\theoremstyle{satz}
\newtheorem{prop}[defn]{Proposition}
\newtheorem{thm}[defn]{Theorem}
\newtheorem{lem}[defn]{Lemma}
\newtheorem{cor}[defn]{Corollary}
\newtheorem{propdefn}[defn]{Proposition/Definition}
\newtheorem{defnprop}[defn]{Definition/Proposition}
\DeclareRobustCommand*{\Cn}{$\mathbb{C}^n$}
\DeclareRobustCommand*{\Dheading}{{$\mathscr{D}$}}
\renewcommand{\C}{\mathbb{C}}%
\newcommand{\N}{\mathbb{N}}
\newcommand{\R}{\mathbb{R}}
\newcommand{\Z}{\mathbb{Z}}
\renewcommand{\a}{\! \! \! \!}
\newcommand{\set}[1]{\{ #1 \}}
\newcommand{\sbt}{{\,\begin{picture}(-1,1)(-1,-3)\circle*{2.5}\end{picture}\ }}
\newcommand{\xfrac}[2]{%
\mbox{\raisebox{0.3ex}{\ensuremath{\displaystyle #1}\hspace{-0.3ex}}%
{\raisebox{-0.1ex}{\Large /}\hspace{-0.4ex}}%
\raisebox{-0.8ex}{\ensuremath{\displaystyle #2}}}}
\newcommand{\D}{\mathscr{D}}
\newcommand{\M}{\mathscr{M}}
\newcommand{\V}{\mathcal{V}}
\newcommand{\gr}{\operatorname{gr}}
\newcommand{\can}{\operatorname{can}}
\newcommand{\var}{\operatorname{var}}
\newcommand{\Qui}{\mathcal{Q}ui}
\newcommand{\Quis}{\Qui^{\Sigma_1}}
\newcommand{\Cc}{\mathcal{C}}
\newcommand{\Id}{\operatorname{Id}}
\newcommand{\Hom}{\operatorname{Hom}}
\newcommand{\im}{\operatorname{im}}
\renewcommand{\Im}{\operatorname{Im}}
\renewcommand{\Re}{\operatorname{Re}}
\newcommand{\Spec}{\operatorname{Spec}}
\newcommand{\mult}{\operatorname{mult}}
\newcommand{\length}{\operatorname{length}}
\newcommand{\Oh}[1]{\mathcal{O}_{ #1 }}
\tikzset{>=stealth',every on chain/.append style={join}, tip/.style={>->}, every join/.style={->}}
\begin{document}

\title{\Large Quiver $\mathcal D$-Modules and the Riemann-Hilbert Correspondence}
\author{Stephanie Zapf}
\date{\vspace{-3ex}}
\maketitle
\begin{abstract}
In this paper, we show that every regular singular $\D$-module in $\C^n$ whose singular locus is a normal crossing is isomorphic to a quiver 
$\D$-module -- a $\D$-module whose definition is based on certain representations of the hypercube quiver. To be more precise we give an equivalence of the respective categories. Our definition 
of quiver $\D$-modules 
is based on the one of Khoroshkin and Varchenko. To prove the equivalence, we use an equivalence by Galligo, Granger and Maisonobe for regular 
singular $\D$-modules whose singular locus is a normal crossing which involves the classical Riemann-Hilbert correspondence.
\end{abstract}
\vskip 2ex

The classical version of the Riemann-Hilbert correspondence, as it was proven independently by Masaki Kashiwara \cite{Kashiwara-RH} and Zoghman 
Mebkhout \cite{Mebkhout-RH} in 1980, yields an equivalence \linebreak \mbox{$ \mathcal{M}od_{\text{rh}} (\D_X) \overset{\cong}{\longrightarrow}  \text{Perv}(X)$}~between regular holonomic 
$\D_X$-modules and perverse sheaves on $X$. In dimension one locally at 0 other equivalences are known which one might obtain 
from the classical version: The equivalence of the category $\mathcal{M}od_{\text{rh}} (\D)$ of regular 
holonomic $\D$-modules with the category $\Cc_1$ and the category $\Quis_1$, the categories
of finite quiver representations $\begin{tikzcd}[column sep=normal]
E  \arrow[yshift=0.5ex, sloped]{r}[above]{u} & 
F  \arrow[yshift=-0.5ex, sloped]{l}[below]{v} 
\end{tikzcd} \a $
over $\C$ fulfilling that $u \circ v + \Id$ is invertible and $\Spec (u \circ v)  \subset \Sigma_1$, respectively (see \cite{Malgrange}, 
\cite{Bjoerk} or \cite{Dimca}). In higher dimension André Galligo, Michel Granger and Philippe Maisonobe proved that in the case of a normal crossing 
divisor in dimension~$n$, the category of perverse sheaves with respect to the induced normal crossing stratification is equivalent to the category $\Cc_n$ 
(the generalization of $\Cc_1$). This means, using the Riemann-Hilbert correspondence, that the category of regular holonomic $\D$-modules whose singular 
locus is a normal crossing is equivalent to~$\Cc_n$ (see \cite{GGM1} and \cite{GGM2}). However, it is not that easy to assign a $\D$-module to a given 
quiver representation with respect to this equivalence concretely.

A contribution to the question of how to assign to a quiver representation a $\D$-module comes from Sergei Khoroshkin and Alexander Varchenko 
\cite{KV}. To a given hyperplane arrangement in~$\C^n$, they associate a quiver. And to each finite representation over $\C$ of such a 
quiver, they associate a $\D$-module in a rather intuitive way. This yields a functor $E$ from the category of representations over these quivers into the 
category of holonomic $\D$-modules. Using this definition in dimension one, one sees that this gives a functor from $\Quis_1$ to $\mathcal{M}od_{\text{rh}} 
(\D)$ one can use as quasi-inverse for the equivalence above. In particular one sees that every regular holonomic $\D$-module in dimension one locally at 0 
is isomorphic to a quiver $\D$-module.  This makes their construction very promising for higher dimensions. 

The main idea of our work is to use this construction of quiver $\D$-modules by Khoroshkin and Varchenko in the case of a normal crossing hyperplane 
arrangement and to combine it with the theorem of Galligo, Granger and Maisonobe. In \prettyref{sec:repr_hypercube_quiver} we give some general statements 
on representations of the hypercube quiver. In \prettyref{sec:quiver_dmodules} 
we define the category of quiver $\D$-modules and give their main properties. In our Main \prettyref{thm:main_theorem} we give 
the link between this category and the category $\Quis_n$ using the theorem of Galligo, Granger and Maisonobe.

\vspace{1ex}
\noindent 
\emph{Acknowledgements.} 
First and foremost, I would like to thank my supervisor Marco Hien for introducing me to this subject and supporting me
throughout my research. I am also grateful to Michel Granger for giving me the 
opportunity to do a residence for research in Angers with plenty of discussions and suggestions. Finally, I want to thank Giovanni Morando and Hedwig Heizinger for various helpful discussions.


\section{Finite representations of the hypercube quiver} \label{sec:repr_hypercube_quiver}

In the following let us consider finite representations over $\C$ of the following quiver:\\
Let $n \in \N^+$ and let $\mathcal{P}(\set{1, \ldots ,n })$ denote the power set of $\set{1, \ldots , n}$. The quiver consists of $2^n$ vertices which we 
denote by $I \in \mathcal{P}(\set{1, \ldots ,n }) $, and $n 2^n$ oriented edges\\[-2ex]
\begin{displaymath}
\begin{tikzcd}[column sep=normal]
I \arrow[yshift=0.6ex, sloped]{r}[above]{ } & {I \cup \set{i}} \arrow[yshift=-0.6ex, sloped]{l}[below, inner sep=0.8ex]{ }
\end{tikzcd}\vspace*{-1ex}
\end{displaymath}
for $i \in \set{1 , \ldots , n} \setminus I$. This gives us a kind of hypercube quiver by imaging that the vertices of the quiver lie on the vertices of 
a $n$-dimensional hypercube, and we have two edges exactly for every edge of the hypercube.


\subsection{Definitions and basic properties}

In the following we are going to define three standard categories of hypercube quiver representations, denoted $\Qui_n$, $\Cc_n$ and $\Quis_n$. Let us 
start with the definition of $\Qui_n$. This is basically just the category of finite representations over $\C$ of the above hypercube quiver.

\begin{defn} \label{defn:Qui_n}
The category $\Qui_n$ for $n \in \N^+$ is defined as follows:
\begin{itemize}
\item The objects consist of $2^n$ finitely generated $\C$-vector spaces denoted $V_I$ where $I \in \mathcal{P}(\set{1, \ldots ,n })$, equipped 
with $n2^{n}$ linear maps $u_{I , i}$ and $y_{I, i}$  for $i \in \set{1 , \ldots , n} \setminus I$,
\begin{displaymath}
\begin{tikzcd}[column sep=normal]
V_I \arrow[yshift=0.6ex, sloped]{r}[above]{u_{I ,i} } & V_{I \cup \set{i}}  \arrow[yshift=-0.6ex, sloped]{l}[below, inner sep=0.8ex]{y_{I, i} }
\end{tikzcd}
\end{displaymath}
and they satisfy the following commutativity conditions for $i,j \in \set{1, \ldots , n} \setminus I$:
\begin{gather*}
u_{I \cup \set{i}, j} \circ u_{I ,i} = u_{I \cup \set{j}, i} \circ u_{I ,j}   \ \ \ \ \ \ y_{I ,i} \circ y_{I \cup \set{i}, j}  =   y_{I ,j} \circ y_{I 
\cup \set{j}, i}  \\
y_{I \cup \set{i}, j} \circ u_{I \cup \set{j},i} = u_{I,i} \circ y_{I ,j} 
\end{gather*}
\item A morphism between two objects $(V_I, u_{I,i}, y_{I,i})$ and $(V^\prime_I, u^\prime_{I,i}, y^\prime_{I,i})$ is given by $2^n$ linear maps 
$h_I \colon V_I \rightarrow V^\prime_I$ such that $u^\prime_{I,i} \circ h_I =  h_{I \cup \set{i}} \circ u_{I,i}$ and $ h_I \circ y_{I,i} = 
y^\prime_{I,i} \circ h_{I \cup \set{i}}$ for $i \in \set{1 , \ldots , n} \setminus I$.
\end{itemize}
\end{defn}
Now, let us define the categories $\Cc_n$ and $\Quis_n$. They are full subcategories of $\Qui_n$ fulfilling an additional constraint on their objects.  
\begin{defn}
The category $\Cc_n$ is the full subcategory of $\Qui_n$ such that every object $(V_I, u_{I,i}, y_{I,i})$ additionally fulfils that $u_{I,i} 
\circ y_{I,i} + \Id_{V_{I \cup \set{i}}}$ and $y_{I,i} \circ u_{I,i} + \Id_{V_I}$ are invertible.
\end{defn}
\begin{defn}
The category $\Quis_n$ is the full subcategory of $\Qui_n$ such that every object $(V_I, u_{I,i}, y_{I,i})$ additionally fulfils that $\Spec (u_{I,i} \circ y_{I,i}) , \, \Spec (y_{I,i} \circ u_{I,i} 
) \subset \Sigma_1 \vcentcolon= \Sigma +1$ where \begin{displaymath}
 \Sigma \vcentcolon= \Bigg\{ \alpha \in \C \  \biggr| \, -1 \leq \Re (\alpha) \leq 0, \ \Im (\alpha) = \begin{cases}
                                                                                        \geq 0 & \text{if $ \Re (\alpha) = -1$,}\\
										    < 0 & \text{if $\Re (\alpha) = 0$,} \\
										    \text{arbitrary} & \text{otherwise.}
                                                                                       \end{cases} \ \ \Bigg\} 
\end{displaymath}
\end{defn}
We note that $ u_{I,i} 
\circ y_{I,i} + \Id_{V_{I \cup \set{i}}}$ is invertible iff $y_{I,i} \circ u_{I,i} + \Id_{V_I}$ is invertible, and  $\Spec ( u_{I,i} \circ y_{I,i} ) 
\subset \Sigma_1 $ iff $\Spec ( y_{I,i} \circ u_{I,i} ) \subset \Sigma_1$.\\

The last topic in this subsection is a dualizing functor acting on our quiver categories.

\begin{defnprop} 
The contravariant functor $D \colon \Qui_n \rightarrow \Qui_n$ is defined on objects $(V_I, u_{I,i}, y_{I,i})$ of $\Qui_n$ by 
\begin{displaymath}
D \left( \! \! \!
\begin{tikzcd}[column sep=normal]
V_I \arrow[yshift=0.6ex, sloped]{r}[above]{u_{I ,i} } & V_{I \cup \set{i}} \arrow[yshift=-0.6ex, sloped]{l}[below, inner sep=0.8ex]{y_{I, i} }
\end{tikzcd}
\! \! \right) \vcentcolon= 
\begin{tikzcd}[column sep=normal]
V^\ast_I \arrow[yshift=0.6ex, sloped]{r}[above]{y^\ast_{I, i} } & V^\ast_{I \cup \set{i}} \arrow[yshift=-0.6ex, sloped]{l}[below, inner sep=0.8ex]{ 
u^\ast_{I ,i}}
\end{tikzcd}
\end{displaymath}
where $V^\ast_I$ is the dual vector space of $V_I$ and $u^\ast_{I,i}$, $y^\ast_{I,i}$ are the dual/transpose maps of $u_{I,i}$, $y_{I,i}$. 
Let $(h_I)$ denote a morphism in $\Qui_n$. Then we set
\begin{displaymath}
 D((h_I)) \vcentcolon= (h_I^\ast)
\end{displaymath}
where $h_I^\ast$ is the dual map of $h_I$. This yields an equivalence of categories where $D$ is its own quasi-inverse, and it also establishes an equivalence from $\Quis_n$ to $\Quis_n$, and from 
$\Cc_n$ to $\Cc_n$ as well.
\end{defnprop}


\subsection{An equivalence of categories}

The goal of this subsection is to prove in \prettyref{thm:C_equivalent_Quis} an equivalence (or rather an isomorphism) of the categories 
$\Cc_n$ and $\Quis_n$. This is a principal component of the present work. We use the following pair of functors:

\begin{defn}
The covariant functors $ Q \colon \Quis_n \rightarrow \Cc_n $ and $\mathcal{G} \colon \Cc_n \rightarrow \Quis_n$ are defined by:\\
Let $(V_I, u_{I,i}, c_{I,i})$ denote an object in $\Quis_n$ and let $(h_I)$ denote a morphism in $\Quis_n$. We set
\begin{align*}
& Q ( (V_I, u_{I,i}, c_{I,i}) ) \vcentcolon=  (V_I, u_{I,i}, y_{I,i})  \ \text{ and } \ Q ((h_I) ) \vcentcolon= (h_I) \\
& \text{where } y_{I,i} \vcentcolon= \sum_{k=1}^\infty \frac{(2 \pi i)^k}{k!} (c_{I,i} \circ u_{I,i})^{k-1} \circ c_{I,i} = c_{I,i} \circ 
\sum_{k=1}^\infty \frac{(2 \pi i)^k}{k!} ( u_{I,i}  \circ c_{I,i})^{k-1}.
\end{align*}
Let $(V_I, u_{I,i}, w_{I,i})$ denote an object in $\Cc_n$ and let $(h_I)$ denote a morphism in $\Cc_n$. Then, set
\begin{align*}
& \mathcal{G} ( (V_I, u_{I,i}, w_{I,i}) ) \vcentcolon= (V_I, u_{I,i}, x_{I,i}) \ \text{ and } \ \mathcal{G} ( (h_I) ) \vcentcolon= (h_I) .
\end{align*}
The map $x_{I,i}$ is given as follows: Let $s_{I,i} \colon V_I \rightarrow V_I$ denote the unique linear map with eigenvalues in $\Sigma_1$ such that
\begin{displaymath}
e^{2 \pi i s_{I,i}} = w_{I,i} \circ u_{I,i} + \Id_{V_I} \ \ \text{ and set } \ \
 x_{I,i} \vcentcolon= \left( \sum_{k=1}^\infty \frac{(2 \pi i )^k}{k!} s_{I,i}^{k-1} \right)^{-1} \circ w_{I,i}.
\end{displaymath}
\end{defn}

\begin{thm} \label{thm:C_equivalent_Quis}
The category $\Cc_n$ is isomorphic to the category $\Quis_n$ using the pair of covariant functors $ Q \colon \Quis_n \rightarrow \Cc_n $ and $ 
\mathcal{G} \colon \Cc_n \rightarrow \Quis_n$.%
\end{thm}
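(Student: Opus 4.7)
The plan is to verify directly that $\mathcal{G}\circ Q = \Id_{\Quis_n}$ and $Q\circ\mathcal{G} = \Id_{\Cc_n}$, i.e.\ to prove the isomorphism on the nose rather than only up to natural isomorphism. The arithmetic backbone of the whole argument is the single formal identity $T\cdot f(T) = e^{2\pi i T} - \Id$ with $f(T) := \sum_{k\ge 1}(2\pi i)^k T^{k-1}/k!$. Combined with the fact that $e^{2\pi i \cdot}\colon \Sigma_1 \to \C^\times$ is a bijection (so that the operator $s_{I,i}$ in the definition of $\mathcal{G}$ is uniquely specified by the holomorphic functional calculus), this makes $f(T)$ and $g(T) := T/(e^{2\pi i T}-\Id)$ mutually reciprocal holomorphic functions on the relevant spectra.

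\textbf{Paragraph 2.} The substantive step is well-definedness of $Q$ and $\mathcal{G}$ on objects: the images must still satisfy the three $\Qui_n$-commutativity relations as well as the extra condition cutting out the target subcategory. The subcategory conditions are immediate: $y_{I,i}\circ u_{I,i} = f(c_{I,i}u_{I,i})\circ c_{I,i}u_{I,i} = e^{2\pi i (c_{I,i}u_{I,i})}-\Id$ shifts to an invertible operator, while $x_{I,i}\circ u_{I,i} = g(s_{I,i})\circ s_{I,i} f(s_{I,i}) = s_{I,i}$ has spectrum in $\Sigma_1$ by construction. For the commutativity relations, the $u$-$u$ relation is preserved trivially, and the mixed relation $y_{I\cup\{i\},j}\circ u_{I\cup\{j\},i} = u_{I,i}\circ y_{I,j}$ reduces by a short induction to the identities
\[
(c_{I\cup\{i\},j}u_{I\cup\{i\},j})^{k-1}\circ u_{I,i} \;=\; u_{I,i}\circ (c_{I,j}u_{I,j})^{k-1} \qquad (k\ge 1),
\]
obtained by alternately applying the $u$-$u$ and mixed $c$-$u$ relations. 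The main obstacle, and most delicate step, is the second relation $y_{I,i}\circ y_{I\cup\{i\},j} = y_{I,j}\circ y_{I\cup\{j\},i}$. My strategy is first to establish the sliding identity
\[
(u_{I,i}c_{I,i})\circ c_{I\cup\{i\},j} \;=\; c_{I\cup\{i\},j}\circ (u_{I\cup\{j\},i}c_{I\cup\{j\},i})
\]
by composing the $c$-$c$ and mixed $c$-$u$ relations, which allows $f(u_{I,i}c_{I,i})$ to be slid through $c_{I\cup\{i\},j}$; then to verify that $u_{I\cup\{j\},i}c_{I\cup\{j\},i}$ and $u_{I\cup\{i\},j}c_{I\cup\{i\},j}$ commute as endomorphisms of $V_{I\cup\{i,j\}}$, a computation that chains all three $\Qui_n$-relations, after which the remaining $f$-series factors can be rearranged freely. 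The parallel verifications for $\mathcal{G}$ have the same shape, with power series in $c$ and $u$ replaced by holomorphic functions of $wu$; the standard fact that $hA = A'h$ implies $h\varphi(A) = \varphi(A')h$ for any $\varphi$ holomorphic on a neighborhood of the spectra (via the Cauchy integral representation of $\varphi(A)$) takes care of both these commutativity checks and the morphism compatibility, since a morphism in $\Quis_n$ (resp.\ $\Cc_n$) is mapped to the same underlying family $(h_I)$ and the intertwining with the transported structure maps is exactly of this form.

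\textbf{Paragraph 3.} Given well-definedness, the mutual inverse property is a short computation. For $\mathcal{G}\circ Q$: starting from $(V_I,u_{I,i},c_{I,i})\in\Quis_n$, the identity $y_{I,i}u_{I,i}+\Id = e^{2\pi i (c_{I,i}u_{I,i})}$ together with uniqueness of the $\Sigma_1$-logarithm (and the hypothesis $\Spec(c_{I,i}u_{I,i})\subset\Sigma_1$) forces $s_{I,i} = c_{I,i}u_{I,i}$, whence $x_{I,i} = g(c_{I,i}u_{I,i})\circ f(c_{I,i}u_{I,i})\circ c_{I,i} = c_{I,i}$. For $Q\circ\mathcal{G}$: the identity $x_{I,i}u_{I,i} = s_{I,i}$ gives $y_{I,i} = f(s_{I,i})\circ g(s_{I,i})\circ w_{I,i} = w_{I,i}$. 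Both functors act as the identity on the underlying morphism data $(h_I)$, so the compositions are literally the identity functors on $\Quis_n$ and $\Cc_n$.
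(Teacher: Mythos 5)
Your proposal is correct, and its overall architecture --- check well-definedness of $Q$ and $\mathcal{G}$ on objects and morphisms, then verify $Q\circ\mathcal{G}=\Id_{\Cc_n}$ and $\mathcal{G}\circ Q=\Id_{\Quis_n}$ on the nose --- coincides with the paper's; your paragraphs 1 and 3 match the paper's steps (i) and (iv) almost verbatim. The one genuine divergence is how the commutativity relations and the morphism compatibility are transported through $\mathcal{G}$. The paper first rewrites the known $\Cc_n$-relations as intertwining relations between the exponentials $e^{2\pi i s_{I,i}}=w_{I,i}\circ u_{I,i}+\Id$ and then invokes a dedicated statement (\prettyref{cor:c_expa=expb_c_iff_ca=bc}) asserting that $\gamma\circ e^{2\pi i\alpha}=e^{2\pi i\beta}\circ\gamma$ forces $\gamma\circ\alpha=\beta\circ\gamma$ once $\Spec(\alpha),\Spec(\beta)\subset\Sigma_1$; proving that converse direction costs a reduction through $\ker\gamma$ and $\im\gamma$. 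You instead derive the intertwining relations directly for the products $w\circ u$ from the $\Cc_n$-relations and push them through the functional calculus $s_{I,i}=\varphi(w_{I,i}\circ u_{I,i})$ using only the easy direction $hA=A'h\Rightarrow h\varphi(A)=\varphi(A')h$. This is legitimate and bypasses the corollary, but it hides one point you should make explicit: the $\Sigma_1$-normalized branch of $\frac{1}{2\pi i}\log(1+\lambda)$ is not a single-valued holomorphic function on $\C\setminus\set{-1}$ (it jumps across $(-1,\infty)$), so for each intertwining pair $(A,A')$ you must exhibit \emph{one} function $\varphi$, holomorphic on a union of small disks around the finitely many points of $\Spec(A)\cup\Spec(A')$ and taking the $\Sigma_1$-branch values there, and then appeal to the uniqueness in \prettyref{prop:matrix_ln_unique} to identify $\varphi(A)=s$ and $\varphi(A')=s'$; only then does the quoted standard fact apply. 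On the other side, your treatment of the $Q$-direction commutativity (the sliding identity and the verification that $u_{I\cup\set{j},i}\circ c_{I\cup\set{j},i}$ commutes with $u_{I\cup\set{i},j}\circ c_{I\cup\set{i},j}$) is actually more explicit than the paper's, which dismisses these checks as following ``by direct computation.''
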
%
Note that $i$ in $2\pi i$ is the imaginary unit. Before proving the theorem, we verify two helpful statements from matrix analysis.

\begin{prop} \label{prop:matrix_ln_unique}
 Let $E$ denote a finite dimensional $\C$-vector space and let $f \colon E \rightarrow E$ denote an invertible linear map. Then there exists a unique 
linear map $g \colon E \rightarrow E$ with $\Spec (g) \subset \Sigma_1$ such that
\begin{displaymath}
f = e^{2 \pi i g} .  
\end{displaymath}
\end{prop}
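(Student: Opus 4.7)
The plan is to prove existence and uniqueness separately, using the generalized eigenspace decomposition of $f$ together with the key observation that $\Sigma_1$ is a fundamental domain for the translation $\alpha \mapsto \alpha + 1$ on $\C$. Concretely, the boundary conventions in the definition of $\Sigma$ (and hence of $\Sigma_1 = \Sigma + 1$) are precisely what is needed to ensure that the restriction of $\alpha \mapsto e^{2\pi i \alpha}$ to $\Sigma_1$ is a bijection onto $\C \setminus \set{0}$.

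For existence, I would decompose $E = \bigoplus_\lambda E_\lambda$ into the generalized eigenspaces of $f$, where every $\lambda$ lies in $\C \setminus \set{0}$ since $f$ is invertible. On each piece one has $f|_{E_\lambda} = \lambda \Id + N_\lambda$ with $N_\lambda$ nilpotent; pick the unique $\mu_\lambda \in \Sigma_1$ with $e^{2 \pi i \mu_\lambda} = \lambda$, and set
\[
g|_{E_\lambda} \vcentcolon= \mu_\lambda \Id + \frac{1}{2 \pi i} \log \bigl( \Id + \lambda^{-1} N_\lambda \bigr),
\]
where $\log(\Id + M) = M - M^2/2 + M^3/3 - \cdots$ terminates because $\lambda^{-1} N_\lambda$ is nilpotent. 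Since the two summands commute, a direct computation gives $e^{2 \pi i \, g|_{E_\lambda}} = e^{2 \pi i \mu_\lambda} \cdot (\Id + \lambda^{-1} N_\lambda) = f|_{E_\lambda}$. The spectrum of $g|_{E_\lambda}$ reduces to $\set{\mu_\lambda}$ because the log-term is nilpotent, so $\Spec(g) \subset \Sigma_1$.

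For uniqueness, suppose $g$ satisfies the stated conditions. Then $g$ commutes with $f = e^{2 \pi i g}$, so the generalized $\mu$-eigenspace $E'_\mu$ of $g$ is $f$-stable. Writing $g|_{E'_\mu} = \mu \Id + N'$ with $N'$ nilpotent, one gets $f|_{E'_\mu} = e^{2 \pi i \mu} \cdot e^{2 \pi i N'}$, and the unipotent factor shows that $f|_{E'_\mu}$ has only $e^{2 \pi i \mu}$ as an eigenvalue. Hence $E'_\mu \subseteq E_{e^{2 \pi i \mu}}$, and the fundamental-domain property forces the decomposition $E = \bigoplus_\mu E'_\mu$ to agree with $E = \bigoplus_\lambda E_\lambda$ via $\mu \leftrightarrow \lambda = e^{2 \pi i \mu}$; in particular each $\mu_\lambda$ is uniquely recovered from $f$. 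On $E_\lambda$, the only remaining unknown is the nilpotent $N' = g|_{E_\lambda} - \mu_\lambda \Id$, which satisfies $e^{2 \pi i N'} = \lambda^{-1} f|_{E_\lambda}$, a fixed unipotent map. Since $N \mapsto e^{2 \pi i N}$ is a bijection from nilpotent to unipotent endomorphisms, with inverse given by the truncating log-series, $N'$, and hence $g$, is uniquely determined.

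The main obstacle is the bookkeeping for $\Sigma_1$ as a fundamental domain — verifying that the boundary conventions at $\Re(\alpha) = 0$ and $\Re(\alpha) = 1$ are chosen so that every nonzero complex number has exactly one preimage in $\Sigma_1$ under $\alpha \mapsto e^{2 \pi i \alpha}$. Once that is in hand, the rest is standard, resting only on the fact that $\exp$ and $\log$ are mutually inverse as formal power series, applied here to nilpotent operators where both series terminate.
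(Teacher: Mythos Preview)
Your argument is correct. Both you and the paper hinge on the same observation, namely that $\Sigma_1$ is a fundamental domain for $\alpha \mapsto e^{2\pi i \alpha}$ on $\C \setminus \{0\}$, so conceptually the proofs agree. The difference is in how the matrix-logarithm step is handled: the paper simply invokes \cite[Corollary~6.2.12]{Horn-Johnson}, which packages both the existence of a logarithm and (implicitly) its uniqueness once a branch is fixed, whereas you unwind this by hand via the generalized eigenspace decomposition, writing down the logarithm blockwise with the terminating series $\log(\Id + \lambda^{-1}N_\lambda)$ and then arguing uniqueness from the bijection between nilpotent and unipotent endomorphisms. Your version is more self-contained and makes explicit exactly where the $\Sigma_1$-constraint is used (to pin down each $\mu_\lambda$ and to force the eigenspace decompositions of $f$ and $g$ to coincide); the paper's version is shorter but offloads the linear-algebra content to a reference. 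Either is fine for this statement.
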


\begin{proof}
We choose the branch of the logarithm defined on $\C \setminus  \R_{\geq 0}$ with image contained in $\{ \alpha \in \C \mid 0 < \Im ( \alpha) < 
2 \pi \}$, and use a unique extension to $\C \setminus \{ 0 \}$ with image in $2 \pi i \Sigma_1$. Note that every complex number has a unique 
representative in this strip up to $\pm 2 \pi i \N$. The existence and uniqueness of $2 \pi i g$ follows now with the aid of \cite[Corol\-lary 
6.2.12]{Horn-Johnson} which deals with finding a matrix~$A$ for a given invertible matrix $B$ such that $e^A = B$. 
\end{proof}

The next corollary will be auxiliary to prove commutativity conditions later.

\begin{cor} \label{cor:c_expa=expb_c_iff_ca=bc}
Let $E$, $F$ denote two finite dimensional $\C$-vector spaces and let $\gamma \colon E \rightarrow F$ denote a linear map. Furthermore, let $\alpha \colon E 
\rightarrow E$ and $\beta \colon F \rightarrow F$ denote two linear maps with $\Spec( \alpha), \Spec (\beta) \subset \Sigma_1$. Then:
\begin{displaymath}
 \gamma \circ e^{2 \pi i \alpha} = e^{2 \pi i \beta} \circ \gamma \ \ 
\Longleftrightarrow \ \ \gamma \circ \alpha = \beta \circ \gamma 
\end{displaymath}
\end{cor}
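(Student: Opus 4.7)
The implication $(\Leftarrow)$ is immediate: under $\gamma \circ \alpha = \beta \circ \gamma$ an induction gives $\gamma \circ \alpha^k = \beta^k \circ \gamma$ for every $k \geq 0$, and term-by-term summation against $(2 \pi i)^k / k!$ yields $\gamma \circ e^{2 \pi i \alpha} = e^{2 \pi i \beta} \circ \gamma$.

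For the converse, the plan is to transfer the problem to the finite-dimensional space $H := \Hom(E, F)$. I introduce the commuting operators $L, R \colon H \to H$ with $L(f) := \beta \circ f$ and $R(f) := f \circ \alpha$, and set $\phi := L - R$. Then $\phi(\gamma) = 0$ means precisely $\beta \circ \gamma = \gamma \circ \alpha$, while the hypothesis $\gamma \circ e^{2 \pi i \alpha} = e^{2 \pi i \beta} \circ \gamma$ reads $(e^{2 \pi i L} - e^{2 \pi i R})(\gamma) = 0$. Since $L$ and $R$ commute, $e^{2 \pi i L} = e^{2 \pi i R} \circ e^{2 \pi i \phi}$, and cancelling the invertible factor $e^{2 \pi i R}$ reduces the task to the kernel identity
\[
 \ker \bigl( e^{2 \pi i \phi} - \Id_H \bigr) \; = \; \ker \phi .
\]

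The heart of the proof is a spectral analysis of $\phi$. Viewing $H \cong F \otimes E^\ast$ one has $L = \beta \otimes \Id_{E^\ast}$ and $R = \Id_F \otimes \alpha^\ast$, so $\Spec(L) = \Spec(\beta)$, $\Spec(R) = \Spec(\alpha)$, and a simultaneous generalized-eigenspace decomposition gives $\Spec(\phi) \subset \{ \mu - \lambda \mid \mu \in \Spec(\beta), \, \lambda \in \Spec(\alpha) \} \subset \Sigma_1 - \Sigma_1$. I then verify the arithmetic lemma $(\Sigma_1 - \Sigma_1) \cap \Z = \{ 0 \}$ by case analysis on $\Re(\mu - \lambda) \in \{ -1, 0, 1 \}$ for $\mu, \lambda \in \Sigma_1$: the case $\Re(\mu - \lambda) = 0$ forces $\mu - \lambda \in i \R \cap \Z = \{ 0 \}$, while the cases $\Re(\mu - \lambda) = \pm 1$ pin $\mu$ and $\lambda$ to the two distinguished boundary lines of $\Sigma_1$, where the sign conventions on $\Im(\cdot)$ inherited from $\Sigma$ ($\Im \geq 0$ on the line $\Re = 0$, $\Im < 0$ on the line $\Re = 1$) force a strict nonzero sign on $\Im(\mu - \lambda)$, which is incompatible with integrality.

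With the lemma in hand I decompose $H = \bigoplus_\nu H_\nu$ into generalized $\phi$-eigen\-spaces. On $H_\nu$ with $\nu \neq 0$, every eigenvalue of $e^{2 \pi i \phi}$ equals $e^{2 \pi i \nu} \neq 1$, so $e^{2 \pi i \phi} - \Id_H$ is invertible there. On $H_0$ the map $\phi$ is nilpotent, and
\[
 e^{2 \pi i \phi} - \Id_{H_0} \; = \; 2 \pi i \, \phi \, \circ \, \Big( \sum_{k \geq 0} \tfrac{(2 \pi i)^k}{(k+1)!} \phi^k \Big) ,
\]
the bracketed factor being the identity plus a nilpotent and hence invertible, so $\ker(e^{2 \pi i \phi} - \Id_{H_0}) = \ker \phi|_{H_0}$. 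Assembling the contributions from all generalized eigenspaces yields the required identity. The real obstacle is the arithmetic lemma $(\Sigma_1 - \Sigma_1) \cap \Z = \{ 0 \}$ — it is exactly the point where the delicate boundary conventions in the definition of $\Sigma_1$ are used in an essential way; once it is in place, the Jordan-theoretic remainder is mechanical.
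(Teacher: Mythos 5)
Your proof is correct, but it follows a genuinely different route from the paper's. The paper argues in three steps: first the case where $\gamma$ is invertible, which is settled by the uniqueness of the logarithm with spectrum in $\Sigma_1$ (\prettyref{prop:matrix_ln_unique}) applied to $e^{2\pi i \beta} = e^{2\pi i (\gamma \alpha \gamma^{-1})}$; then an auxiliary observation that a subspace is $\alpha$-invariant iff it is $e^{2\pi i \alpha}$-invariant (since $\alpha$ is a polynomial in $e^{2\pi i \alpha}$); and finally the general case by factoring $\gamma$ through the isomorphism $E/\ker(\gamma) \to \im(\gamma)$ and applying the invertible case to the induced maps. You instead pass to the Sylvester operator $\phi = L - R$ on $\Hom(E,F)$ and reduce everything to the single kernel identity $\ker(e^{2\pi i \phi} - \Id) = \ker \phi$, which you obtain from $\Spec(\phi) \subset \Sigma_1 - \Sigma_1$ together with the arithmetic fact $(\Sigma_1 - \Sigma_1) \cap \Z = \{0\}$ — your case analysis of that fact is right, and it is of course the same "strict fundamental domain for $\C/\Z$" property that powers the paper's uniqueness statement in \prettyref{prop:matrix_ln_unique}. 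Your version buys a more self-contained and arguably cleaner argument: it needs neither the existence/uniqueness of the matrix logarithm nor the invariant-subspace lemma, only simultaneous triangularization of the commuting operators $L$ and $R$ and the generalized eigenspace decomposition of $\phi$; the paper's version buys economy within its own context, since \prettyref{prop:matrix_ln_unique} has already been established and is reused verbatim.
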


\begin{proof}
We only need to prove the direction ``$\Rightarrow$''. This proof is divided into three parts:
\begin{itemize}
 \item [(i)] Assume that $\gamma \colon E \rightarrow F$ is invertible. We receive 
\begin{displaymath}
\gamma \circ e^{2\pi i \alpha} = e^{2 \pi i \beta } \circ \gamma \ \ 
\Longleftrightarrow \ \ e^{2 \pi i \beta } = e^{2 \pi i \, ( \gamma \circ \alpha \circ 
\gamma^{-1} ) }.
\end{displaymath}
The eigenvalues of $\beta$ and $\gamma \circ \alpha \circ \gamma^{-1}$ are both contained in $\Sigma_1$. Using the uniqueness given in 
\prettyref{prop:matrix_ln_unique}, we receive the claimed equality.
\item [(ii)] To prove the general case, we need the following small statement:
\par
\begingroup
\leftskip=3em
 $\alpha$ preserves a linear subspace $\tilde{E}$ of E, i.\,e.\ $\alpha (\tilde{E}) \subset \tilde{E}$. $\Leftrightarrow$ $e^{2 \pi i \alpha}$ preserves 
$\tilde{E}$.
\par
\endgroup
The direction ``$\Rightarrow$'' is clear. To prove ``$\Leftarrow$'', use \cite[Corol\-lary 6.2.12]{Horn-Johnson} to receive that $\alpha$ is a polynomial in $e^{2 
\pi i \alpha}$ (the concrete form of the polynomial depends on the map $\alpha$). 
\item [(iii)] Now, let us prove the general case. Examining $\gamma \circ e^{2 \pi i \alpha} = e^{2 \pi i \beta} \circ \gamma$, we see that $e^{2 \pi 
i \beta}$ preserves $\im (\gamma)$ and that $e^{2 \pi i \alpha}$ preserves $\ker(\gamma)$. By part (ii), $\im(\gamma)$ and $\ker(\gamma)$ are preserved by $\beta$ and $\alpha$, respectively.  This 
gives us well-defined maps
\begin{displaymath}
\overline{e^{2 \pi i \alpha}} \colon \xfrac{E}{\ker (\gamma)} \rightarrow \xfrac{E}{\ker (\gamma)} \ \text{ and }  \
\overline{\alpha} \colon \xfrac{E}{\ker (\gamma)} \rightarrow \xfrac{E}{\ker (\gamma)}
\end{displaymath}
where $\overline{e^{2 \pi i \alpha}} = e^{2\pi i \overline{\alpha}}$. Consider the following diagram whose first and last square commute:
 \begin{displaymath}
\begin{tikzcd}[column sep=5.5ex, row sep=4.5ex]
E \arrow[two heads]{r}  \arrow{d}[left]{ \alpha} &  
 \xfrac{E}{\ker (\gamma)} 
\arrow{r}{\overline{\gamma}}
\arrow{d}[left]{\overline{ \alpha}} & 
\operatorname{im} (\gamma) 
\arrow[hook]{r}
\arrow{d}[left]{ \beta} & 
F
 \arrow{d}[left]{ \beta} \\
E \arrow[two heads]{r}  &  
 \xfrac{E}{\ker (\gamma)} 
\arrow{r}{\overline{\gamma}} &
\operatorname{im} (\gamma)
\arrow[hook]{r}  & 
F 
\end{tikzcd} 
\end{displaymath}
One has $\Spec (\overline{\alpha}) \subset \Spec(  \alpha)  \subset  \Sigma_1$. By part (i) we receive now that the commutativity of
 \begin{displaymath}
\begin{tikzcd}[column sep=normal, row sep=normal]
 \xfrac{E}{\ker (\gamma)} 
\arrow{r}{\overline{\gamma}}
\arrow{d}[left]{e^{2 \pi i \overline{ \alpha}}} & 
\im (\gamma) 
 \arrow{d}[left]{e^{2 \pi i \beta}}  \\
 \xfrac{E}{\ker (\gamma)} 
\arrow{r}{\overline{\gamma}} &
\im (\gamma)
\end{tikzcd}
\text{ implies that }
\begin{tikzcd}[column sep=normal, row sep=normal]
 \xfrac{E}{\ker (\gamma)} 
\arrow{r}{\overline{\gamma}}
\arrow{d}[left]{\overline{ \alpha}} & 
\im (\gamma)
 \arrow{d}[left]{ \beta}  \\
 \xfrac{E}{\ker (\gamma)} 
\arrow{r}{\overline{\gamma}} &
\im (\gamma)
\end{tikzcd}
\end{displaymath}
commutes. This yields the commutativity of the above diagram and proves the claim.\qedhere
\end{itemize}
\end{proof}

\begin{proof}[Proof of \prettyref{thm:C_equivalent_Quis}]
For simplicity, we set for a linear map $A$
\begin{displaymath}
\psi(A)\vcentcolon= \sum_{k=1}^\infty \frac{(2\pi i)^k}{k!} A^{k-1} .
\end{displaymath}
We need to check several small statements to obtain the theorem:
\begin{itemize}
\item [(i)] Verify that $x_{I,i}$ is well-defined: The map $w_{I,i} \circ u_{I,i} + \Id_{V_I}$ is invertible by assumption. The existence and uniqueness 
of $s_{I,i}$ follows from \prettyref{prop:matrix_ln_unique}. As $\psi (0) = 2 \pi i \neq 0$ and
\begin{displaymath}
\psi ( \lambda) = \frac{e^{2 \pi i \lambda} -1}{\lambda}\neq 0
\end{displaymath}
for $\lambda \in \Sigma_1 \setminus \set{0}$, we receive that the eigenvalues of $\psi ( s_{I,i})$ are non-zero and therefore $x_{I,i} = 
\psi(s_{I,i})^{-1} \circ w_{I,i}$ is well-defined.

\item [(ii)] We have to check that $Q \colon \Quis_n \rightarrow \Cc_n$ is a well-defined functor: \\ 
Let $(V_I, u_{I,i}, c_{I,i})$ denote an object in $\Quis_n$ and let $(V_I, u_{I,i}, y_{I,i})$ denote its image under $Q$. This is indeed an object in 
$\Cc_n$: The map
\begin{align*}
 y_{I,i} \circ u_{I,i} + \Id_{V_I} =  \psi(c_{I,i} \circ u_{I,i} ) \circ c_{I,i} \circ u_{I,i} + \Id_{V_I} =  e^{ 2 \pi i (c_{I,i} \circ u_{I,i} )}
\end{align*}
is obviously invertible. The commutativity conditions in $\Cc_n$ follow by direct computation from those in $\Quis_n$. 
Now, let  $(h_I)$ denote a morphism from $(V_I, u_{I,i}, c_{I,i})$ to $(V^\prime_I, u^\prime_{I,i}, c^\prime_{I,i})$ in $\Quis_n$. To prove that $Q((h_I)) 
= (h_I)$ is a morphism in $\Cc_n$ from $(V_I, u_{I,i}, y_{I,i})$ to $(V^\prime_I, u^\prime_{I,i}, y^\prime_{I,i})$, we need to check the equations
\begin{align*}
 u^\prime_{I,i} \circ h_I =  h_{I \cup \set{i}} \circ u_{I,i} \ \text{ and } \ h_I \circ y_{I,i} = y^\prime_{I,i} \circ h_{I \cup \set{i}} .
\end{align*}
Both equations follow directly from the fact that $(h_I)$ is a morphism in $\Quis_n$. 

\item [(iii)] We have to check that $\mathcal{G} \colon \Cc_n \rightarrow \Quis_n$ is a well-defined functor:\\
Let $(V_I, u_{I,i}, w_{I,i})$ denote an object in $\Cc_n$ and let $(V_I, u_{I,i}, x_{I,i})$ denote its image under $\mathcal{G}$. $(V_I, u_{I,i}, 
x_{I,i})$ is indeed an object in $\Quis_n$:

\begin{itemize}
 \item [$\bullet$] We have the equality
\begin{align*}
 x_{I,i} \circ u_{I,i} &= \psi( s_{I,i} )^{-1} \circ w_{I,i} \circ u_{I,i} = \psi( s_{I,i} )^{-1} \circ (e^{2 \pi i s_{I,i}} - \Id_{V_I}) = s_{I,i} 
\end{align*}
which gives us $\Spec ( x_{I,i} \circ u_{I,i} ) \subset \Sigma_1$.
 \item [$\bullet$] To prove the commutativity conditions, we use the following identities in $\Cc_n$:
 \begin{align*}
e^{2 \pi i s_{I,j}} \circ w_{I,i} &= w_{I,i} \circ e^{2 \pi i s_{I \cup \set{i},j}}   \\
u_{I,i} \circ e^{2 \pi i s_{I,j}} &= e^{2 \pi i s_{I \cup \set{i},j}} \circ u_{I,i}  \\
 e^{2 \pi i s_{I,i}} \circ e^{2 \pi i s_{I,j}} &=  e^{2 \pi i s_{I,j}} \circ e^{2 \pi i s_{I,i}}
 \end{align*}
These yield with the aid of \prettyref{cor:c_expa=expb_c_iff_ca=bc}:
 \begin{align*}
& \star \    s_{I,j} \circ w_{I,i} = w_{I,i} \circ  s_{I \cup \set{i},j}  & & \Longrightarrow \ w_{I,i} \circ  \psi (s_{I 
\cup \set{i},j} )^{-1} = \psi (s_{I,j})^{-1} \circ w_{I,i}\\
&\star \  s_{I,i} \circ  s_{I,j} =s_{I,j} \circ  s_{I,i} & & \Longrightarrow \
\psi(s_{I,i})^{-1} \circ \psi ( s_{I,j})^{-1} = \psi (s_{I,j})^{-1} \circ \psi ( s_{I,i})^{-1} \\
& \star \  u_{I,i} \circ  s_{I,j} = s_{I \cup \set{i},j} \circ u_{I,i} & & \Longrightarrow \ \psi(s_{I \cup \set{i},j})^{-1} \circ  u_{I,i}  = 
u_{I,i} \circ  \psi (s_{I,j})^{-1}
 \end{align*}
The commutativity conditions follow now immediately.
\end{itemize}

Now, let  $(h_I)$ denote a morphism from 
$(V_I, u_{I,i}, w_{I,i})$ to $(V^\prime_I, u^\prime_{I,i}, w^\prime_{I,i})$ in $\Cc_n$. To prove that $\mathcal{G}((h_I)) = (h_I)$ is a 
morphism from $(V_I, u_{I,i}, 
x_{I,i})$ to $(V^\prime_I, u^\prime_{I,i}, x^\prime_{I,i})$ in $\Quis_n$, we need to verify the identities
\begin{align*}
\star \ & u^\prime_{I,i} \circ h_I =  h_{I \cup \set{i}} \circ u_{I,i}  \\
 \star \ & h_I \circ x_{I,i} = x^\prime_{I,i} \circ h_{I \cup \set{i}} \ \Longleftrightarrow  \
h_I \circ \psi( s_{I,i} )^{-1} \circ w_{I,i} = \psi( 
s^\prime_{I,i} )^{-1} \circ h_I \circ w_{I,i} \, .
 \end{align*}
The first one follows directly. To prove the second equation we use the equality $$ e^{2 \pi i s^\prime_{I,i}} \circ h_I  =
h_I \circ e^{2 \pi i s_{I,i}} .$$
Now, \prettyref{cor:c_expa=expb_c_iff_ca=bc} yields
\begin{displaymath}
 s^\prime_{I,i} \circ h_I  =h_I \circ s_{I,i} \ \text{ and therefore } \ 
 h_I  \circ \psi(s_{I,i})^{-1} = \psi ( s^\prime_{I,i})^{-1} \circ  h_I .
\end{displaymath}

\pagebreak
\item [(iv)] We show that {$Q \circ \mathcal{G} = \Id_{\mathcal{C}_n} $}: For this we need to check for an object $(V_I, u_{I,i}, w_{I,i})$ in $\Cc_n$ that~$(Q \circ \mathcal{G}) ( (V_I, u_{I,i}, 
w_{I,i}) )  = (V_I, u_{I,i}, w_{I,i})$. Let
\begin{align*}
\mathcal{G} ( (V_I, u_{I,i}, w_{I,i}) ) =\vcentcolon  (V_I, u_{I,i}, x_{I,i}) \ &\text{ where } \ x_{I,i} = \psi ( s_{I,i} )^{-1} \circ w_{I,i} 
\ \text{ and}\\
 Q ( (V_I, u_{I,i}, x_{I,i}) ) =\vcentcolon (V_I, u_{I,i}, y_{I,i}) \ &\text{ where } \ y_{I,i} = \psi (x_{I,i} \circ u_{I,i}) \circ x_{I,i} \, 
.
\end{align*}
By part (iii) of the proof, $x_{I,i}$, $u_{I,i}$ and $s_{I,i}$ fulfil the equality $x_{I,i} \circ u_{I,i} = s_{I,i}$. Using the definition of $x_{I,i}$, 
this yields
\begin{displaymath}
 y_{I,i} = \psi (x_{I,i} \circ u_{I,i} ) \circ x_{I,i} = \psi( s_{I,i} ) \circ \psi ( s_{I,i})^{-1} \circ w_{I,i} = w_{I,i} \, .
\end{displaymath}

\item [(iv)]  We show that $\mathcal{G} \circ Q = \Id_{\mathcal{Q}ui^\Sigma_n}$:
We need to verify for an object $(V_I, u_{I,i}, c_{I,i})$ in $\Quis_n$ that~$( \mathcal{G} \circ Q) ( (V_I, u_{I,i}, c_{I,i}) ) 
 = (V_I, u_{I,i}, c_{I,i})$. We set
\begin{align*}
 Q ( (V_I, u_{I,i}, c_{I,i}) ) =\vcentcolon (V_I, u_{I,i}, y_{I,i}) \ &\text{ where } \ y_{I,i} = \psi (c_{I,i} \circ u_{I,i} ) \circ c_{I,i} \ 
\text{ and}\\
 \mathcal{G} ( (V_I, u_{I,i}, y_{I,i}) ) =\vcentcolon (V_I, u_{I,i}, x_{I,i}) \ &\text{ where }  \ x_{I,i} =  \psi ( s_{I,i} )^{-1} \circ 
y_{I,i} \, .
\end{align*}
We have the equality
\begin{displaymath}
e^{2 \pi i (c_{I,i} \circ u_{I,i})} = y_{I,i} \circ u_{I,i} + \Id_{V_I} =  e^{2 \pi i s_{I,i}} .
\end{displaymath}
The eigenvalues of $c_{I,i} \circ u_{I,i}$ and $ s_{I,i}$ are contained in $\Sigma_1$. Thus, the uniqueness of $s_{I,i}$ (cf.~\prettyref{prop:matrix_ln_unique}) yields $c_{I,i} \circ u_{I,i} = 
s_{I,i}$. Hence,
\begin{displaymath}
x_{I,i} =  \psi (s_{I,i} )^{-1} \circ y_{I,i} = 
\psi (c_{I,i} \circ u_{I,i} )^{-1} \circ \psi (c_{I,i} \circ u_{I,i} ) \circ c_{I,i} =  c_{I,i} \, .
\end{displaymath}
 \end{itemize}
All in all, this shows that $Q \colon \Quis_n \rightarrow \Cc_n$ and $\mathcal{G} \colon \Cc_n \rightarrow \Quis_n$ are inverse functors to each other and therefore 
they 
define an isomorphism between the categories $\Quis_n$ and $\Cc_n$.
\end{proof}


\section[\texorpdfstring{Quiver \Dheading-modules in \Cn{} whose singular locus is a normal crossing
}{Quiver D-modules in C\textasciicircum n whose singular locus is a normal crossing
}]{Quiver \Dheading-modules in \Cn{} whose singular locus is a normal crossing 
} \label{sec:quiver_dmodules}

From now on $\Oh{} = \Oh{X}$ will always denote the sheaf of analytic functions on $X= \C^n$ for a fixed integer $n \in \N^+$, and $\D = \D_X$ denotes the sheaf of rings of linear 
partial differential operators with analytic coefficients. Furthermore, we denote by $z_1, \ldots, z_n$ the coordinates of $\C^n$ and by  $\partial_i = 
\frac{\partial}{\partial z_i}$ the $i$-th 
partial derivation operator for $i =1, \ldots, n$.


\subsection{Definitions and basic properties}

Let us first define objects of quiver $\D$-modules. These are $\D$-modules defined in a very natural manner on the basis of certain 
quiver representations where we use the category $\Qui_n$ as starting point. Our definition is based on the one in \cite[Subsection 4.2]{KV} in the case of a normal crossing 
hyperplane arrangement whereas we use analytic $\D$-modules instead of algebraic ones.

\begin{defn}[Variant of \cite{KV}] \label{defn:quiver_dmodule}
Let $\V_n = (V_I, B_{I \cup \set{i}, I}, B_{I, I \cup \set{i}})$ denote an object in the category $\Qui_n$. We define the associated quiver $\D$-module $E \V_n$ as 
the quotient of 
\begin{align*}
\bigoplus_{I \in \mathcal{P}(\set{1, \ldots , n})} \left( \D \otimes_\C \overline{\Omega}_I \otimes_\C V_I \right)
\end{align*}
by the subsheaf $\mathcal{J}$. The sections of $\mathcal{J}$ over $U \subset \C^n$, open, are given by $\C$-linear combinations of the 
following elements 
\begin{align*}
&a \partial_i \otimes_\C \omega_I \otimes_\C v_I - 
a \otimes_\C \omega_{I \cup \set{i}} \otimes_\C B_{I \cup \set{i}, I}(v_I) \ \ \text{and} \\
&a z_i \otimes_\C \omega_{I \cup \set{i}} \otimes_\C v_{I \cup \set{i}} - 
a \otimes_\C \omega_I \otimes_\C B_{I, I \cup \set{i}} (v_{I \cup \set{i}})
\end{align*}
where $I \neq \set{1, \ldots ,n}$, $i \notin I$, $a \in \D(U)$, $v_J \in V_J$ for $J \in \mathcal{P}(\set{1, \ldots , n})$
and $\overline{\Omega}_J \vcentcolon= \set{ c \, \omega_J \mid c \in \C }$ is a 1-dimensional $\C$-vector space generated by the element 
$\omega_J$. The left $\D$-module structure on $E \V_n$ is given by left multiplication.
\end{defn}
$\overline{\Omega}_J$ is used here to clarify to which summand of $\bigoplus_I \left( \D \otimes \overline{\Omega}_I \otimes V_I \right)$ an element 
belongs to. Our next aim is to receive a functor from $\Qui_n$ into the category of $\D$-modules on $\C^n$.

\begin{cor}  \label{cor:mor_Qui_induces_mor_quiver_dmodules}
Let  $\V_n = (V_I, B_{I \cup \set{i}, I}, B_{I, I \cup \set{i}}) $ and $\V^\prime_n = (V^\prime_I, B^\prime_{I \cup \set{i}, I}, B^\prime_{I, I \cup 
\set{i}}) $ denote two objects in $\Qui_n$ and let 
\begin{displaymath}
\phi= (h_I) \colon \V_n \rightarrow \V_n^\prime
\end{displaymath}
denote a morphism from $\V_n$ to $\V_n^\prime$. Then $\phi $ induces naturally a morphism
\begin{displaymath}
 E \phi \colon E\V_n \rightarrow E \V_n^\prime.
\end{displaymath}
\end{cor}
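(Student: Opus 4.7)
The plan is to define $E\phi$ first at the level of the unreduced direct sum $\bigoplus_I \D\otimes_\C \overline{\Omega}_I \otimes_\C V_I$ and then verify that it descends to the quotient by $\mathcal{J}$. On the $I$-th summand I set
\begin{align*}
a\otimes_\C \omega_I \otimes_\C v_I \ \longmapsto\ a\otimes_\C \omega'_I \otimes_\C h_I(v_I),
\end{align*}
extending $\C$-linearly and taking the direct sum over $I\in\mathcal{P}(\{1,\ldots,n\})$. Here I reuse the symbol $\omega_I$ for the canonical generator of $\overline{\Omega}_I$ on both sides, identifying the one-dimensional $\C$-vector spaces in the evident way. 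Since the map only touches the rightmost tensor factor and leaves the $\D$-factor untouched, it is automatically a morphism of left $\D$-modules on $\bigoplus_I \D\otimes_\C \overline{\Omega}_I\otimes_\C V_I$.

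The essential step is to check that the image of $\mathcal{J}$ lies in $\mathcal{J}'$, so that the map descends to a well-defined morphism $E\phi\colon E\V_n\to E\V_n'$. It suffices to verify this on the two families of local generators listed in \prettyref{defn:quiver_dmodule}. The first generator is sent to
\begin{align*}
a\partial_i\otimes_\C \omega'_I\otimes_\C h_I(v_I) - a\otimes_\C \omega'_{I\cup\{i\}}\otimes_\C h_{I\cup\{i\}}\bigl(B_{I\cup\{i\},I}(v_I)\bigr),
\end{align*}
and the morphism condition $h_{I\cup\{i\}}\circ B_{I\cup\{i\},I}=B'_{I\cup\{i\},I}\circ h_I$ in $\Qui_n$ rewrites the second summand with $h_{I\cup\{i\}}(B_{I\cup\{i\},I}(v_I))$ replaced by $B'_{I\cup\{i\},I}(h_I(v_I))$. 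The resulting expression is precisely a local generator of $\mathcal{J}'$ applied to the element $h_I(v_I)\in V'_I$. The argument for the second family of generators is entirely analogous, using $h_I\circ B_{I,I\cup\{i\}}=B'_{I,I\cup\{i\}}\circ h_{I\cup\{i\}}$.

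Finally, the assignment $\phi\mapsto E\phi$ is manifestly functorial: the identity morphism in $\Qui_n$ produces the identity on $E\V_n$, and composition is preserved because $(h_I)\circ (g_I) = (h_I\circ g_I)$ acts termwise on each summand $\D\otimes_\C \overline{\Omega}_I\otimes_\C V_I$. There is no real obstacle here; the only thing to be careful about is bookkeeping of the two commutativity relations coming from the definition of a morphism in $\Qui_n$ so that the two types of generators of $\mathcal{J}$ both land in $\mathcal{J}'$.
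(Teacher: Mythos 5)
Your proposal is correct and follows essentially the same route as the paper: define the map on the free module $\bigoplus_I \D\otimes_\C\overline{\Omega}_I\otimes_\C V_I$ via $\Id_\D\otimes\Id_{\overline{\Omega}_I}\otimes h_I$, use the two morphism identities in $\Qui_n$ to check that the generators of $\mathcal{J}$ are sent into $\mathcal{J}'$, and descend to the quotient. The remark on functoriality is fine but belongs to the subsequent Proposition/Definition rather than to this corollary.
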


\begin{proof}
 Consider the following diagram whose rows are exact sequences:
\begin{displaymath}
\begin{tikzcd}[column sep=7ex, row sep=7ex]
0 \rar &  
\mathcal{J}
\arrow[hook]{r}
\arrow{d}{\tilde{h}} & 
\bigoplus\limits_{I \in \mathcal{P}(\set{1, \ldots , n})} \left( \D \otimes_\C \overline{\Omega}_I \otimes_\C V_I \right) 
\arrow[two heads]{r} 
\arrow{d}{\bigoplus_I (\Id_\D \otimes \Id_{\overline{\Omega}_I} \otimes h_I)} 
\arrow[color=white]{dl}[color=black, above, xshift=4.5ex, yshift=-2ex, font=\Large]{\circlearrowleft} & 
E \V_n 
\arrow[dashed]{d} 
\rar & 
0\\
0 \rar &  
\mathcal{J}^\prime
\arrow[hook]{r} &
\bigoplus\limits_{I \in \mathcal{P}(\set{1, \ldots , n})} \left( \D \otimes_\C \overline{\Omega}_I \otimes_\C V^\prime_I \right) 
\arrow[two heads]{r}  & 
E \V_n^\prime
\rar & 
0%
\end{tikzcd} 
\end{displaymath}%
$(h_I)$ induces naturally a $\D$-linear map $\tilde{h} \colon \mathcal{J} \rightarrow \mathcal{J}^\prime$ which fulfils on sections over $U \subset \C^n$, 
open,
\begin{align*}
\tilde{h} & \! \left(a \partial_i \otimes \omega_I \otimes v_I - 
a \otimes \omega_{I \cup \set{i}} \otimes B_{I \cup \set{i}, I}(v_I) \right) = \\
&= a \partial_i \otimes \omega_I \otimes h_I(v_I) - a \otimes \omega_{I \cup 
\set{I}}   \otimes h_{I\cup \set{i}} (B_{I \cup \set{i}, I}(v_I)) = \\
&= a \partial_i \otimes \omega_I \otimes h_I(v_I) - a \otimes \omega_{I 
\cup \set{I}}   \otimes B^\prime_{I \cup \set{i}, I}(h_I(v_I))  \hspace*{3em} \text{and}  
\\
\tilde{h} & \! \left( a z_i \otimes \omega_{I \cup \set{i}} \otimes v_{I \cup \set{i}} - 
a \otimes \omega_I \otimes B_{I, I \cup \set{i}} (v_{I \cup \set{i}}) \right) = \\
&  = a z_i \otimes \omega_{I \cup \set{i}} \otimes h_{I \cup 
\set{i}}(v_{I \cup \set{i}}) - 
a \otimes \omega_I \otimes h_I (B_{I, I \cup \set{i}} ( v_{I \cup \set{i}}))  = \\
&= a z_i \otimes \omega_{I \cup \set{i}} \otimes h_{I \cup \set{i}}(v_{I \cup \set{i}}) - 
a \otimes \omega_I \otimes B^\prime_{I, I \cup \set{i}} (h_{I \cup \set{i}}(v_{I \cup \set{i}})).
\end{align*}
This makes the square commute as indicated. In particular, it induces in a natural way a $\D$-linear morphism from $ E\V_n$ to $ E \V_n^\prime$.
\end{proof}

\begin{propdefn}
Let $\mathcal{M}od (\D)$ denote the category of $\D$-modules on $\C^n$. Then we receive a covariant functor, denoted $E$, from the category $\Qui_n$ into 
the category of $\D$-modules 
\begin{displaymath}
E \colon \Qui_n \rightarrow \mathcal{M}od (\D).
\end{displaymath}
$E$ associates to an object $\V_n$ in $\Qui_n$ the object $E \V_n$ in $\mathcal{M}od (\D)$ from \prettyref{defn:quiver_dmodule}, and it associates to a morphism 
$\phi \colon \V_n \rightarrow \V_n^\prime$ in $\Qui_n$ the morphism $E \phi \colon E \V_n \rightarrow E \V_n^\prime$ in $\mathcal{M}od (\D)$ from \prettyref{cor:mor_Qui_induces_mor_quiver_dmodules}. 
The category of quiver \mbox{$\D$-modules} is the essential image of the functor $E$. 
\end{propdefn}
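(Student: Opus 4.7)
The plan is to verify the two functoriality axioms, since the assignment on objects has already been supplied by \prettyref{defn:quiver_dmodule} and the assignment on morphisms by \prettyref{cor:mor_Qui_induces_mor_quiver_dmodules}; the final sentence (``category of quiver $\D$-modules is the essential image of $E$'') is purely definitional and requires no argument once $E$ is known to be a functor.

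First I would observe that if $\phi = (\Id_{V_I}) \colon \V_n \to \V_n$ is the identity morphism, then the induced map on the big direct sum
\begin{displaymath}
\bigoplus_{I} \bigl( \Id_{\D} \otimes \Id_{\overline{\Omega}_I} \otimes \Id_{V_I} \bigr)
\end{displaymath}
is literally the identity of $\bigoplus_I ( \D \otimes_\C \overline{\Omega}_I \otimes_\C V_I )$. The map $\tilde h \colon \mathcal J \to \mathcal J$ from the proof of \prettyref{cor:mor_Qui_induces_mor_quiver_dmodules} is then the identity as well, so the map induced on the quotient $E\V_n$ is $\Id_{E\V_n}$. Hence $E(\Id_{\V_n}) = \Id_{E\V_n}$.

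Next I would check compatibility with composition. Given morphisms $\phi = (h_I) \colon \V_n \to \V_n^\prime$ and $\psi = (g_I) \colon \V_n^\prime \to \V_n^{\prime\prime}$ in $\Qui_n$, their composite is $\psi \circ \phi = (g_I \circ h_I)$. Since tensor products of linear maps are functorial in each factor, we have the equality
\begin{displaymath}
\bigoplus_{I} \bigl( \Id_{\D} \otimes \Id_{\overline{\Omega}_I} \otimes (g_I \circ h_I) \bigr) = \Bigl( \bigoplus_{I} \Id_{\D} \otimes \Id_{\overline{\Omega}_I} \otimes g_I \Bigr) \circ \Bigl( \bigoplus_{I} \Id_{\D} \otimes \Id_{\overline{\Omega}_I} \otimes h_I \Bigr)
\end{displaymath}
of maps between the respective direct sums. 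Passing to the quotients, which is justified because each of the three maps restricts correctly to the subsheaves $\mathcal J$, $\mathcal J^\prime$, $\mathcal J^{\prime \prime}$ by the construction in \prettyref{cor:mor_Qui_induces_mor_quiver_dmodules}, yields $E(\psi \circ \phi) = E(\psi) \circ E(\phi)$ as $\D$-linear maps $E\V_n \to E\V_n^{\prime \prime}$.

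The only technical point worth being careful about is the uniqueness of the induced map on the quotient used in both steps: this is a standard consequence of the universal property of cokernels together with the fact that $\tilde h$ is already uniquely determined by its action on the generating local sections listed in \prettyref{defn:quiver_dmodule}. I do not foresee a genuine obstacle here; everything is formal once the well-definedness statement of \prettyref{cor:mor_Qui_induces_mor_quiver_dmodules} is in hand. Finally, the essential image of $E$ is a full subcategory of $\mathcal{M}od(\D)$ by construction, and declaring it to be the category of quiver $\D$-modules completes the proposition/definition.
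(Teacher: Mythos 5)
Your proposal is correct and follows essentially the same route as the paper: the paper's proof likewise verifies preservation of composition via the identity $(\Id_\D \otimes \Id_{\overline{\Omega}_I} \otimes h_I) \circ (\Id_\D \otimes \Id_{\overline{\Omega}_I} \otimes g_I) = (\Id_\D \otimes \Id_{\overline{\Omega}_I} \otimes (h_I \circ g_I))$ together with $\widetilde{h \circ g} = \tilde h \circ \tilde g$, and notes preservation of identities. Your write-up only spells out in more detail the (standard) passage to quotients via the universal property, which the paper leaves implicit.
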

\begin{proof}
Let $(h_I)$ and $(g_I)$ denote two morphisms in $\Qui_n$ with compatible source and target, respectively. Then $E$ preserves the composition of morphisms, 
using \prettyref{cor:mor_Qui_induces_mor_quiver_dmodules}, as
\begin{align*}
(\Id_\D \otimes \Id_{\overline{\Omega}_I} 
\otimes h_I) \circ (\Id_\D \otimes \Id_{\overline{\Omega}_I} \otimes g_I) = (\Id_\D \otimes \Id_{\overline{\Omega}_I} \otimes (h_I \circ g_I)) \ \text{ 
and } \ \widetilde{h \circ g } = \tilde{h} \circ \tilde{g} .
\end{align*} 
As $E$ also preserves the identity morphism, $E$ is indeed a functor from $\Qui_n$ to $\mathcal{M}od(\D)$.
\end{proof}

Now, we define the category $\mathcal{M}od_{\text{rh}}^S (\D)$ of regular singular holonomic $\D$-modules whose singular locus is contained in the 
normal crossing. However, we have to mention that this denomination is a little bit sloppy as in fact the objects in $\mathcal{M}od_{\text{rh}}^S (\D)$ 
need to fulfil a property on their characteristic variety from which the property on the singular locus follows.

\begin{defn} \label{defn:cat_Mod_rh_S}
Let $S \vcentcolon= \{ z_1 \cdot \ldots \cdot z_n = 0 \}$ denote the normal crossing in $\C^n$. $S$ induces naturally a (Whitney) 
stratification of $\C^n$ by $2^n$ disjoint strata $X_I \subset \C^n$ which are defined by
\begin{displaymath}
\overline{X}_I \vcentcolon= \{ z_i = 0 \, \mid \, i \in I \} , \ \ \ 
X_I \vcentcolon= \overline{X}_I \setminus \Bigl( \, \bigcup_{\substack{J \in \mathcal{P} ( \set{1, \ldots , 
n}) \\ \overline{X}_J \subsetneq \overline{X}_I }} \overline{X}_J \Bigr)
\end{displaymath}
for $I \in \mathcal{P} ( \set{1, \ldots , n})$. This fulfils $X_\varnothing = \C^n \setminus S$ and $S= \bigcup_{I, \dim X_I <n} X_I$.\\%
The category $\mathcal{M}od_{\text{rh}}^S (\D)$ is then defined to be the category of regular singular holonomic \mbox{$\D$-modules} whose 
characteristic variety is contained in
\begin{gather*}
 \bigcup_{I \in \mathcal{P} ( \set{1, \ldots , 
n})} T^\ast_{X_I} \C^n \\
\begin{aligned}
\text{where } \ 
 T^\ast_{X_I} \C^n &= \{ (z , \xi ) = (z_1, \ldots, z_n, \xi_1, \ldots, \xi_n ) \in T^\ast \C^n \mid z \in X_I , \,  \xi (v) = 0 \ \forall \ v \in 
T X_I \} = \\
 &= \{ (z_1, \ldots, z_n, \xi_1, \ldots, \xi_n) \in T^\ast \C^n \mid z_i = 0 \Leftrightarrow i \in I, \, \xi_i = 0 \text{ for } 
i \notin I \}.
\end{aligned}
\end{gather*}
\end{defn}

We note that 
\begin{gather*}
  \bigcup_{I \in \mathcal{P} ( \set{1, \ldots , 
n}} T^\ast_{X_I} \C^n  = \Delta_S \\
\text{ where } \  \Delta_S \vcentcolon= \{ (z_1, \ldots, z_n, \xi_1, \ldots, \xi_n ) \in T^\ast \C^n \mid z_i \xi_i = 0 \ \text{for all} \ i 
\in \set{1, \ldots, n} \}
\end{gather*}
which simplifies \prettyref{defn:cat_Mod_rh_S}. Thus, $\mathcal{M}od_{\text{rh}}^S (\D)$ is the category of regular singular holonomic $\D$-modules whose 
characteristic variety is contained in $\Delta_S$.

Let us explain how the property on the singular locus is implied by this: 
Let $\M$ denote a holonomic $\D$-module and let $\operatorname{Char}(\M)$ denote its characteristic variety. The singular locus of $\M$ is defined as the 
projection of $\operatorname{Char}(\M) \setminus T^\ast_{\C^n} \C^n$ onto $\C^n$ where $T^\ast_{\C^n} \C^n$ is the zero section of $T^\ast \C^n$. By direct 
computation one sees that the projection of $\Delta_S \setminus T^\ast_{\C^n} \C^n$ to $\C^n$ is equal to $S = \bigcup_{I, \dim X_I < n} X_I$. Thus, the 
singular locus of $\M$ is contained in $S$.

The stratification $S$ of $\C^n$ in fact also determines the characteristic variety of the quiver $\D$-modules as we will see now.

\begin{thm} \label{thm:E_functor}
The functor $E$ maps from the category $\Qui_n$ into the category $\mathcal{M}od_{\text{rh}}^S (\D)$.
\end{thm}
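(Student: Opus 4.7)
The plan is to verify the three ingredients required for membership in $\mathcal{M}od_{\text{rh}}^S(\D)$: coherence, containment of the characteristic variety in $\Delta_S$ (which together with coherence forces holonomicity), and regular singularities. Coherence is immediate from the construction: \prettyref{defn:quiver_dmodule} exhibits $E\V_n$ as the cokernel of a map between finitely generated free $\D$-modules, generated concretely by the (finitely many) images of $1\otimes\omega_I\otimes v_I$ as $I$ ranges over $\mathcal{P}(\set{1,\ldots,n})$ and $v_I$ over a basis of $V_I$.

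For the characteristic variety I would equip $E\V_n$ with the good filtration $F_k(E\V_n) = F_k\D\cdot\langle 1\otimes\omega_I\otimes v_I\rangle$ induced from the order filtration on $\D$, placing the above generators in degree zero. Passing to the associated graded over $\gr\D = \Oh{}[\xi_1,\ldots,\xi_n]$, the first defining relation of \prettyref{defn:quiver_dmodule} collapses (its right-hand side sits in strictly lower degree) to
\[
\xi_i\cdot[1\otimes\omega_I\otimes v_I]=0 \quad \text{for every } i\notin I,
\]
while the second relation preserves the order and reads
\[
z_i\cdot[1\otimes\omega_{I\cup\set{i}}\otimes v_{I\cup\set{i}}] = [1\otimes\omega_I\otimes B_{I,I\cup\set{i}}(v_{I\cup\set{i}})].
\]
A short calculation then shows that $z_i\xi_i$ annihilates every generator of $\gr E\V_n$: for $i\notin I$ the element is killed by $\xi_i$ alone, whereas for $i\in I$ one first uses the second relation to rewrite $z_i\cdot[1\otimes\omega_I\otimes v_I]$ as an element of the $(I\setminus\set{i})$-summand, on which $\xi_i$ now acts as zero. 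Hence $z_1\xi_1,\ldots,z_n\xi_n$ all lie in $\operatorname{Ann}(\gr E\V_n)$, so $\operatorname{Char}(E\V_n)\subset V(z_1\xi_1,\ldots,z_n\xi_n) = \Delta_S$; since $\dim\Delta_S = n$, this simultaneously yields holonomicity.

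The hardest part will be regular singularity. My approach is to exploit that a directly analogous calculation produces an explicit finite-rank action of each Euler operator on the generators,
\[
z_i\partial_i\cdot(1\otimes\omega_I\otimes v_I) =
\begin{cases}
1\otimes\omega_I\otimes (B_{I,I\cup\set{i}}\circ B_{I\cup\set{i},I})(v_I), & i\notin I,\\
1\otimes\omega_I\otimes (B_{I,I\setminus\set{i}}\circ B_{I\setminus\set{i},I} - \Id)(v_I), & i\in I,
\end{cases}
\]
so $z_i\partial_i$ preserves the finite-dimensional $\C$-subspace of $E\V_n$ spanned by all $1\otimes\omega_I\otimes v_I$ and acts on it by an endomorphism of $\bigoplus_I V_I$ whose spectrum is prescribed by the quiver datum. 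This gives specialisability of $E\V_n$ along each coordinate hyperplane $\set{z_i=0}$ with only finitely many indicial roots, which together with holonomicity is the standard local criterion for regular singularity along $\set{z_i=0}$. Iterating over $i=1,\ldots,n$, and invoking the commutativity conditions of \prettyref{defn:Qui_n} to guarantee that the Kashiwara-Malgrange $V$-filtrations built at distinct coordinate directions are mutually compatible, yields regularity along the full normal crossing $S$ and places $E\V_n$ in $\mathcal{M}od_{\text{rh}}^S(\D)$.
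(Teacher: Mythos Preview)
Your argument for coherence and for the containment $\operatorname{Char}(E\V_n)\subset\Delta_S$ is essentially the paper's: the same good filtration induced from the order filtration on $\D$, and the same computation that $z_i\xi_i$ kills the graded module for every $i$.

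The genuine divergence is in how you handle regularity. The paper does not invoke $V$-filtrations or specialisability at all. Instead it observes that the very computation you already carried out shows that each $z_i\xi_i$ annihilates $\gr^F E\V_n$ to the \emph{first} power; since the $z_i\xi_i$ generate the reduced ideal of $\Delta_S$, this means the annihilator of $\gr^F E\V_n$ is already radical, which is precisely Kashiwara's microlocal criterion for regular holonomicity (the paper cites \cite[Definition~5.2]{Kashiwara-microlocalcalc}). So regularity comes for free, in one line, from the characteristic-variety calculation.

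Your route via the Euler-operator action and specialisability is a legitimate alternative, and your formula for $z_i\partial_i$ on the generators is correct and even reappears later in the paper. But as written the argument is incomplete: specialisability alone is automatic for holonomic modules, so you would need to say precisely which regularity criterion along a hypersurface you are invoking (e.g.\ existence of an $\Oh{}$-coherent lattice stable under all $z_i\partial_i$) and then justify that regularity along each $\{z_i=0\}$ separately implies regular holonomicity of the module; the phrase ``iterating over $i$'' and the appeal to ``mutual compatibility of the $V$-filtrations'' are doing real work that you have not spelled out. The paper's approach sidesteps all of this by extracting regularity directly from the graded computation you already have.
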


\begin{proof}
Let $\V_n = (V_J, B_{J \cup \set{j}, J}, B_{J, J \cup \set{j}})$ denote an object in $\Qui_n$. We define the good filtration $F_k E \V_n$ on $E \V_n$ as 
the filtration induced by the exact sequence
\begin{displaymath}
 \D \otimes  \biggl( \bigoplus_{J} \overline{\Omega}_J \otimes V_J \biggr) \twoheadrightarrow 
 E \V_n \rightarrow 0
\end{displaymath}
using the standard filtration $F_\sbt \D$ of $\D$. Recall that $F_k \D$ is the subsheaf of $\D$ of differential operators of order at most $k 
\in \Z$. Set
\begin{displaymath}
\gr_k^F E \V_n \vcentcolon = \xfrac{F_k E \V_n}{F_{k-1} E \V_n} \ \text{ and } \ \gr^F \D \vcentcolon= \textstyle \bigoplus\limits_k 
\displaystyle \gr^F_k \D \cong \Oh{\C^n} [\xi_1, \ldots, \xi_n ] \, .
\end{displaymath}
Let $P \in F_k \D (U)$ for $U \subset \C^n$, open, and $v_I \in V_I$ for $I \in \mathcal{P}(\set{1, \ldots, n})$. We denote by $[P \otimes 
\omega_I \otimes v_I]$ the image of $P \otimes \omega_I \otimes v_I \in F_k \D (U) \otimes  \overline{\Omega}_I \otimes V_I$ in $F_k E\V_n$. 
Furthermore, let $\sigma_k [ P \otimes \omega_I \otimes v_I ]$ be the image of $[ P \otimes \omega_I \otimes v_I ]$ in 
$\gr_k^F E \V_n$. We will prove that $z_i \xi_i$ annihilates $\gr_k^F E \V_n$ for every~$k \in \Z$ and every $i \in \set{1 , \ldots, n}$.  We need to 
distinguish two cases:\\ If $i \notin I$, then 
\begin{align*}
 & z_i \xi_i \cdot \sigma_k [   P \otimes \omega_I \otimes v_I] =  
 \sigma_{k+1} [  P z_i \partial_i  \otimes \omega_I \otimes v_I]  = 
 \sigma_{k+1} [ P \otimes \omega_I \otimes B_{I, I \cup \set{i}} B_{I \cup \set{i}, I} (v_I) ] =0.
\end{align*}
If $i \in I$, then 
\begin{align*}
& z_i \xi_i \cdot \sigma_k [  P \otimes \omega_I \otimes v_I] =  
\sigma_{k+1} [ P \partial_i  z_i\otimes \omega_I \otimes v_I]  =
\sigma_{k+1} [ P \otimes \omega_I \otimes B_{I, I \setminus \set{i}} B_{I \setminus \set{i}, I} (v_I) ]=0 .
\end{align*}
In both cases $z_i \xi_i$ is an annihilator. Thus, the characteristic variety of $E \V_n$ is contained in $\Delta_S$. This also shows us that 
the dimension of the characteristic variety of $E \V_n$ is at most $n = \dim_\C X$ and therefore $E \V_n$ is holonomic. As well, we see that $E \V_n$ is 
a regular holonomic $\D$-module using the fact that $(z_i \xi_i)^1$ is an annihilator of $\gr^F E \V_n = \oplus_k \gr^F_k E \V_n$ (cf.\ \cite[Definition 5.2]{Kashiwara-microlocalcalc}).%
 \end{proof}

 We not that in \cite{KV} a similar but slightly different proof of the holonomicity and the statement on the characteristic variety is given.\\


\subsection[\texorpdfstring{An equivalence with regular singular \Dheading-modules in \Cn{} whose singular locus is a normal crossing
}{An equivalence with regular singular D-modules in C\textasciicircum n whose singular locus is a normal crossing
}]{An equivalence with regular singular \Dheading-modules in \Cn{} whose singular locus is a normal crossing 
}

Let us clarify some notational facts: Let $\iota \colon U \hookrightarrow X$ 
denote the inclusion for an open subset $U$ of $\C^n$. Then $\Gamma_U$ is the functor which maps sheaves on $\C^n$ to sheaves on $\C^n$ defined by
\begin{displaymath}
 \Gamma_U \vcentcolon= \iota_\ast \iota^{-1} .
\end{displaymath}
Moreover, let
\begin{displaymath}
 \C^n = \prod_{i=1}^n \C_i \ \ \text{ and } \ \ W_i \vcentcolon= \C_i \setminus \R^+_0 .
\end{displaymath}
And for $I \in \mathcal{P} (\set{1, \ldots, n})$ set 
\begin{displaymath}
\Lambda_I \vcentcolon= \sum_{k \in I} \Gamma_{\C_k  \times \prod_{\substack{i=1\\ i \neq k}}^n W_i} \Oh{} \ \ \text{ and } \ \ 
\Oh{I} \vcentcolon= \frac{\Gamma_{\prod_{i=1}^n W_i} \Oh{}}{\Lambda_I}.
\end{displaymath}

Note that $(\Gamma_{\prod_{i=1}^n W_i} \Oh{})_0$ and $\Lambda_{I,0}$ are unitary, commutative rings w.\,r.\,t.\ addition and 
multiplication of functions. And $\Oh{I,0}$ is an abelian additive group and a unitary left $\D_0$-module. But $\Oh{I,0}$ is not 
a ring, as in general the multiplication of functions is not well-defined.\\

The following theorem of A.\ Galligo, M.\ Granger and Ph.\ Maisonobe will be important for our computations:

\begin{thm}[\cite{GGM1}, \cite{GGM2}]
The contravariant functor $\mathscr{A}$ from $\mathcal{M}od_{\text{rh}}^S (\D)$ to $\Cc_n$
\begin{align*}
 \mathscr{A} \colon  \mathcal{M}od_{\text{rh}}^S (\D) & \longrightarrow \Cc_n\\
 \M & \longmapsto 
\xymatrixcolsep{2.5pc}\xymatrix{\Hom_{\D_{X,0}}( \M_0, \Oh{I,0}) \ar@<0.7ex>[r]^{\can_{I,i}}  & \Hom_{\D_{X,0}}( \M_0, \Oh{I,0})  \ar@<0.7ex>[l]^{\var_{I,i}}}
\end{align*}
establishes an equivalence of categories. $\can_{I,i}$ is the canonical map or quotient map which sends solutions with values in 
$\Oh{I,0}$ to solutions with values in $\Oh{I \cup \set{i},0}$. $\var_{I,i}$ is the variation around $z_i=0$, i.\,e.\ we have
\begin{displaymath}
\var_{I,i}(F) = M_i F - F \  \text{ for } \  F \in \Hom_{\D_{X,0}}( \M_0, \Oh{I \cup \set{i},0}) 
\end{displaymath}
where $M_i F$ is the class of a representative of $F$ after analytic continuation around the axis $z_i = 0$.\\
A $\D$-linear morphism $\phi \colon \M \rightarrow \mathscr{N}$ in $\mathcal{M}od_{\text{rh}}^S (\D)$ is mapped under $\mathscr{A}$ to the morphism
\begin{displaymath}
 \left( \Hom_{\D_{X,0}} (\phi_0, \Oh{I,0} ) \right)  \text{ in } \, \Cc_n
\end{displaymath}
where $\Hom_{\D_{X,0}} (\phi_0, \Oh{I,0} ) \colon \Hom_{\D_{X,0}} (\mathscr{N}_0, \Oh{I,0} ) \rightarrow \Hom_{\D_{X,0}} (\mathscr{M}_0, 
\Oh{I,0} ) $ 
is given by $g \mapsto g \circ \phi_0$.
\end{thm}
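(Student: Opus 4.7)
The plan is to reduce the theorem to a combinatorial description of perverse sheaves via the classical Riemann--Hilbert correspondence of Kashiwara--Mebkhout. Applying RH identifies $\mathcal{M}od_{\text{rh}}^S(\D)$ with the full subcategory $\text{Perv}_S(\C^n)$ of perverse sheaves constructible with respect to the normal crossing stratification of \prettyref{defn:cat_Mod_rh_S}; under this identification the solution functor $\M\mapsto \mathbf{R}\Hom_\D(\M,\Oh{})$ corresponds to passing to the de~Rham complex, so the vector spaces $\Hom_{\D_{X,0}}(\M_0,\Oh{I,0})$ appearing in the statement should be read as zeroth cohomologies of modified solution complexes adapted to the stratum $X_I$. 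The proof then reduces to showing that the collection of these spaces, together with the maps $\can_{I,i}$ and $\var_{I,i}$, forms a complete set of combinatorial invariants for $\text{Perv}_S(\C^n)$.

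To match the Hom spaces with the standard data attached to such a perverse sheaf, I would first observe that $\Gamma_{\prod_i W_i}\Oh{}$ is the pushforward of $\Oh{}$ along the inclusion of the polysector $\prod_i(\C_i\setminus\R^+_0)$, whose fundamental group near $0$ is generated by the loops around the axes $z_i=0$. Consequently $\Hom_{\D_{X,0}}(\M_0,(\Gamma_{\prod_i W_i}\Oh{})_0)$ computes the space of multivalued holomorphic solutions of $\M$ near the origin, together with the commuting monodromy operators $M_1,\ldots,M_n$. Quotienting by $\Lambda_I$, whose sections extend holomorphically across $z_k=0$ for $k\in I$, kills contributions not genuinely supported along $X_I$ and leaves exactly the vector space attached to this stratum in the combinatorial presentation of perverse sheaves. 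With this interpretation $\can_{I,i}$ is the projection $\Oh{I,0}\twoheadrightarrow\Oh{I\cup\set{i},0}$, i.\,e.\ specialisation from $X_I$ to the adjacent stratum $X_{I\cup\set{i}}$, while $\var_{I,i}(F)=M_iF-F$ lifts back to $\Oh{I,0}$ precisely because $M_i-\Id$ annihilates the classes killed on passing to $\Oh{I\cup\set{i},0}$. The commutativity relations defining $\Qui_n$ follow from commutativity of the $M_i$ and naturality of the projections, and the invertibility of $\can_{I,i}\circ\var_{I,i}+\Id=M_i$ corresponds to invertibility of the monodromies on multivalued flat sections, which places the image in $\Cc_n$.

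The main obstacle is essential surjectivity: starting from an object $(V_I,\can_{I,i},\var_{I,i})$ in $\Cc_n$, one has to exhibit a regular holonomic $\D$-module whose Hom-data recover it. I would perform this construction on the perverse sheaf side by gluing constructible sheaves along the normal crossing via a Deligne-type procedure, mirroring in dimension $n$ the classical Malgrange--Deligne lifting of a monodromy to a logarithm via the strip $\Sigma_1$, and then transport the result back through RH. Full faithfulness, by contrast, is a more formal matter: the family $\{\Oh{I,0}\}_I$ suffices to reconstruct the full solution complex on a neighbourhood of $0$, so two morphisms inducing the same map on every $\Hom_{\D_{X,0}}(\M_0,\Oh{I,0})$ must agree on $\mathbf{R}\Hom_\D(\M,\Oh{})$ and hence, by RH, on $\M$ itself. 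Contravariance of $\mathscr{A}$ via $g\mapsto g\circ\phi_0$ and functoriality in both variables are then direct.
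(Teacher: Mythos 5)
The paper does not actually prove this theorem: it is quoted from Galligo--Granger--Maisonobe, and the paper's only gloss on it is precisely the factorization you describe, namely $\mathscr{A}\cong\alpha\circ\text{Sol}$ with $\text{Sol}$ coming from the classical Riemann--Hilbert correspondence and $\alpha\colon\text{Perv}^\Xi(\C^n)\to\Cc_n$ the GGM equivalence for the normal crossing stratification. Your sketch therefore follows the same route as the cited source; the two genuinely hard steps you defer --- that the spaces $\Hom_{\D_{X,0}}(\M_0,\Oh{I,0})$ together with $\can_{I,i}$ and $\var_{I,i}$ form a complete invariant of the perverse sheaf, and the Deligne-type gluing needed for essential surjectivity --- are exactly the content of \cite{GGM1} and \cite{GGM2} and are not reproduced in this paper either.
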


In their paper \cite{GGM1}, Galligo, Granger and Maisonobe prove that the category of perverse sheaves in $\C^n$ with respect to the normal crossing 
stratification $\Xi$ is equivalent to the category of quiver representations $\Cc_n$. They establish a functor
\begin{displaymath}
 \alpha \colon \text{Perv}^\Xi (\C^n) \rightarrow \Cc_n.
\end{displaymath}
Composing the functor $\alpha$ with the solution functor $\text{Sol}$ one receives an equivalence of the categories $\mathcal{M}od_{\text{rh}}^S 
(\D)$ and $\Cc_n$ whereby the functor $\mathscr{A}$ is naturally isomorphic to $\alpha \circ \text{Sol}$ \cite{GGM2}.

Now, we are ready to state and prove the Main theorem:

\begin{thm} \label{thm:main_theorem}
The functors $\mathscr{A} \circ E$ and $Q \circ D$ are naturally isomorphic, i.\,e.\ the following diagram commutes up to a natural isomorphism
\begin{displaymath}
\begin{tikzcd}[column sep=7ex, row sep=6ex]
\mathcal{M}od_{\text{rh}}^S (\D)
\arrow[sloped]{r}[above]{\mathscr{A} \cong \alpha \circ \text{Sol} } & 
\Cc_n \\
\Quis_n
\arrow[]{u}[left]{E}
\arrow[sloped]{r}[above]{D } & 
\Quis_n \ .
\arrow[]{u}[right]{Q}
\end{tikzcd}
\end{displaymath}
In particular, $E \colon \Quis_n \rightarrow \mathcal{M}od_{\text{rh}}^S (\D)$ is an equivalence of categories with quasi-inverse $D \circ 
\mathcal{G} \circ \mathscr{A} $, and $E \circ D \circ \mathcal{G}$ is a quasi-inverse of $\mathscr{A}$.\\
Furthermore, $E$ is essentially surjective. This means the category of quiver $\D$-modules is exactly the category $\mathcal{M}od_{\text{rh}}^S (\D)$, and every $\D$-module in 
$\mathcal{M}od_{\text{rh}}^S (\D)$ is in fact isomorphic to a quiver $\D$-module 
as given in \prettyref{defn:quiver_dmodule}.
\end{thm}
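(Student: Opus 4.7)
The strategy is to establish the natural isomorphism $\mathscr{A} \circ E \cong Q \circ D$ as functors $\Quis_n \to \Cc_n$; all remaining conclusions follow formally. Indeed, $\mathscr{A}$ is an equivalence by Galligo--Granger--Maisonobe, $Q$ is an equivalence with quasi-inverse $\mathcal{G}$ by \prettyref{thm:C_equivalent_Quis}, and $D$ is its own quasi-inverse by construction. Consequently, once the diagram commutes up to natural isomorphism, the functor $E$ must be an equivalence with quasi-inverse $D \circ \mathcal{G} \circ \mathscr{A}$, and essential surjectivity of $E$ onto $\mathcal{M}od_{\text{rh}}^S(\D)$ is immediate -- so every $\D$-module in $\mathcal{M}od_{\text{rh}}^S(\D)$ is isomorphic to a quiver $\D$-module.

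For the main computation I fix $\V_n = (V_I, u_{I,i}, c_{I,i}) \in \Quis_n$ and compute $\mathscr{A}(E\V_n)$ directly. By the universal property of the quotient in \prettyref{defn:quiver_dmodule}, a $\D$-linear map $\phi \colon (E\V_n)_0 \to \Oh{I,0}$ is equivalent to a family of $\C$-linear maps $\phi_J \colon V_J \to \Oh{I,0}$ for $J \in \mathcal{P}(\set{1,\ldots,n})$ satisfying, for each $J$ and each $i \notin J$, the compatibility relations
\[
\partial_i \phi_J(v) = \phi_{J \cup \set{i}}(u_{J,i}(v)), \qquad z_i \phi_{J \cup \set{i}}(w) = \phi_J(c_{J,i}(w)).
\]
The central claim is that the assignment $\phi \mapsto \phi_I$, followed by a canonical extraction of the leading coefficient in $\Oh{I,0}$, is a natural $\C$-linear isomorphism $\Hom_{\D_{X,0}}((E\V_n)_0, \Oh{I,0}) \xrightarrow{\cong} V_I^\ast$. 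The enabling facts are: first, combining the defining relations with $\partial_i z_i = z_i \partial_i + \Id$ gives
\[
z_i \partial_i \otimes \omega_{I \cup \set{i}} \otimes w = 1 \otimes \omega_{I \cup \set{i}} \otimes (u_{I,i} \circ c_{I,i} - \Id)(w)
\]
in $E\V_n$, so the $\Quis_n$-condition $\Spec(u_{I,i}c_{I,i}) \subset \Sigma_1$ translates into $\Spec(z_i\partial_i) \subset \Sigma$ on the relevant part of $\Oh{I,0}$; second, \prettyref{prop:matrix_ln_unique} then selects a canonical basis of generalized eigenfunctions for $z_i\partial_i$ (the multi-valued monomials $z^\alpha \log^k z$), producing a well-defined leading-coefficient map. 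Finally, all $\phi_J$ with $J \neq I$ are determined from $\phi_I$ by iteratively applying the compatibility relations (using that $z_j$ is invertible in $\Oh{I,0}$ for $j \notin I$).

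Next I identify the two structural arrows under this isomorphism. The canonical map $\can_{I,i}$ is induced by post-composition with the quotient $\Oh{I,0} \twoheadrightarrow \Oh{I \cup \set{i},0}$; inserting the relation $z_i \phi_{I \cup \set{i}}(w) = \phi_I(c_{I,i}(w))$ into the leading-coefficient extraction identifies $\can_{I,i}$ with $c_{I,i}^\ast \colon V_I^\ast \to V_{I \cup \set{i}}^\ast$ -- exactly the forward arrow of $Q(D(\V_n))$. For the variation $\var_{I,i}$, one applies the monodromy operator $M_i = e^{2\pi i\, z_i \partial_i}$; using the computation above together with $e^{-2\pi i \Id} = \Id$, the monodromy becomes precomposition with $e^{2\pi i\, u_{I,i} c_{I,i}}$ on the $V_{I \cup \set{i}}$-component and with $e^{2\pi i\, c_{I,i} u_{I,i}}$ on the $V_I$-component. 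Expanding
\[
e^{2\pi i \, c_{I,i} u_{I,i}} - \Id = c_{I,i} \circ \psi(u_{I,i} c_{I,i}) \circ u_{I,i}
\]
via the series $\psi$ from the proof of \prettyref{thm:C_equivalent_Quis}, one reads off that $\var_{I,i}$ becomes $\psi(u_{I,i}^\ast \circ c_{I,i}^\ast) \circ u_{I,i}^\ast \colon V_{I \cup \set{i}}^\ast \to V_I^\ast$, which is precisely the backward arrow of $Q(D(\V_n))$; \prettyref{cor:c_expa=expb_c_iff_ca=bc} is the technical device that turns these exponential identities into the required $\psi$-identities on the nose.

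The principal obstacle will be the precise identification $\Hom_{\D_{X,0}}((E\V_n)_0, \Oh{I,0}) \cong V_I^\ast$: it requires enough control on $\Oh{I,0}$ as a $\D_{X,0}$-module -- in particular, its generalized eigenspace decomposition under the commuting operators $z_j \partial_j$ -- to solve the compatibility system uniquely from data at the vertex $I$. A natural approach is induction on $n$ (or on $|I|$), reducing to codimension-one strata where the one-variable theory of \cite{Malgrange}, \cite{Bjoerk}, \cite{Dimca} applies directly, and then propagating along the hypercube via the two compatibility relations. Naturality in morphisms of $\Quis_n$ is then routine: a morphism $(h_I)$ induces $E(h_I)$ functorially by \prettyref{cor:mor_Qui_induces_mor_quiver_dmodules}, and the leading-coefficient identification is manifestly compatible with pullback along $(h_I)$ since it is defined by universal evaluation on elements of $V_I$.
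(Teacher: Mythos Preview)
Your overall architecture matches the paper's: reduce $\Hom_{\D_0}((E\V_n)_0,\Oh{I,0})$ to compatible families $(\phi_J)_J$, show the family is determined by $\phi_I$ alone, identify the resulting space with $V_I^\ast$, and then check that $\can_{I,i}$ and $\var_{I,i}$ become $c_{I,i}^\ast$ and $u_{I,i}^\ast\circ\psi(c_{I,i}^\ast u_{I,i}^\ast)=\psi(u_{I,i}^\ast c_{I,i}^\ast)\circ u_{I,i}^\ast$. That endpoint is correct and is exactly $Q\circ D$.

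The genuine gap is the isomorphism $\Hom_{\D_0}((E\V_n)_0,\Oh{I,0})\cong V_I^\ast$, which you flag as the principal obstacle but leave as a sketch. Two concrete points. First, reducing all $\phi_J$ to $\phi_I$ needs not only invertibility of $z_j$ on $\Oh{I,0}$ for $j\notin I$ but also of $\partial_j$ for $j\in I$ (\prettyref{lem:zj_dj_bij}); you mention only the former, and without the latter you cannot reach $\phi_K$ for $K\not\supseteq I$. Second, your ``leading-coefficient extraction'' and induction on $n$ are not executed, and in particular you give no argument for \emph{surjectivity} of $V_I^\ast\to\Hom$. The paper proceeds quite differently here: it writes down an explicit injection $\eta_I(\alpha)=\alpha\cdot\mathscr{F}$ with $\mathscr{F}$ a product of commuting matrix powers $z_r^{A_r}$ (injectivity via \prettyref{lem:z_r^A}, which needs the $\Sigma_1$-hypothesis), and then obtains surjectivity not by solving anything but by a \emph{dimension count}: the GGM identity $\dim_\C\Hom=\mult_{z_I^\ast}(E\V_n)$ together with an explicit bound $\mult_{z_I^\ast}(E\V_n)\le\dim V_I$ computed via Hilbert--Samuel polynomials. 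No induction on $n$ and no eigenspace analysis of $\Oh{I,0}$ enter.

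One smaller discrepancy: the $\var$ computation in the paper does not use \prettyref{cor:c_expa=expb_c_iff_ca=bc} (that corollary lives entirely inside the proof of \prettyref{thm:C_equivalent_Quis}). It uses \prettyref{cor:phi_A_psi_A} instead, because at the vertex $I\cup\{i\}$ the component $\phi_{I\cup i}^{I}=\partial_i^{-1}\phi_{I\cup i}^{I\cup i}\circ B_{I\cup i,I}$ involves a primitive $\partial_i^{-1}z_i^{A-\Id}=\varphi_A(z_i)$ with genuine logarithms, and the monodromy of that term must be computed directly; your heuristic ``$M_i=e^{2\pi i\,z_i\partial_i}$'' is not literally available on $\Oh{I\cup\{i\},0}$.
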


In \cite[Proposition 4.4]{KV} an equivalence of categories of quiver $\D$-modules in the case of a central arrangement of hyperplanes is also stated. But 
the essential image of the equivalence is not completely clarified. As domain they use a full subcategory of the category of 
representations over the quiver corresponding to the arrangement. This subcategory is defined by restricting the eigenvalues of several maps involved 
in the quiver representation. In the case of a normal crossing, this restriction is much more rigid than our restriction from 
$\Qui_n$ to $\Quis_n$. This is a strong evidence that the essential image of their equivalence in our setting is not $\mathcal{M}od^S_\text{h} (\D)$ or 
$\mathcal{M}od^S_{\text{rh}} (\D)$.

The main parts of the proof of \prettyref{thm:main_theorem} are accomplished in \prettyref{prop:thm_proof_prop_1} and \prettyref{prop:thm_proof_prop_2}. 
Before applying the functor $\mathscr{A}$ to our quiver $\D$-modules we state some properties of $\Oh{I,0}$ in \prettyref{lem:zj_dj_bij} and \prettyref{lem:z_r^A} to simplify the arguments later.

\begin{lem} \label{lem:zj_dj_bij}
For $I \in \mathcal{P} ( \set{1, \ldots ,n})$ let $\Oh{I}$ as above. Then,
\begin{itemize}
 \item [(i)] $z_j$ acts bijective on $\Oh{I,0}$ if and only if $j \notin I$.
 \item [(ii)] $\partial_j$ acts bijective on $\Oh{I,0}$ if and only if $j \in I$.
\end{itemize}
\end{lem}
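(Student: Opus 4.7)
The plan is to reduce the four implications to two elementary geometric facts: first, $z_j$ vanishes nowhere on $\prod_i W_i$ (because $0 \in \R^+_0$, so $0 \notin W_j$); and second, $W_j$ is simply connected, so primitives in $z_j$ exist on any product $\prod_i W_i$.

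For the easy direction of $(i)$, with $j \notin I$: since $j \neq k$ for every $k \in I$, the factor $W_j$ is present in every open set appearing in the definition of $\Lambda_I$, so $1/z_j$ is holomorphic there. Multiplication by $z_j$ is therefore a sheaf automorphism of $\Gamma_{\prod_i W_i}\Oh{}$ that preserves each summand $\Gamma_{\C_k \times \prod_{i\neq k}W_i}\Oh{}$ of $\Lambda_I$ and is invertible on it, so it descends to a bijection on $\Oh{I,0}$. For the easy direction of $(ii)$, with $j \in I$: surjectivity of $\partial_j$ on $(\Gamma_{\prod_i W_i}\Oh{})_0$ follows by integrating in $z_j$ along a path in the simply connected $W_j$, and this descends to surjectivity on the quotient. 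For injectivity, given $\partial_j f = \sum_{k \in I} g_k \in \Lambda_{I,0}$ with each $g_k$ holomorphic across $z_k = 0$, I would antidifferentiate each $g_k$ in $z_j$: for $k \neq j$ the primitive still extends across $z_k = 0$, and for $k = j$, since $g_j$ is already holomorphic across $z_j = 0$, integrating from $z_j = 0$ yields a primitive $h_j$ that still extends across $z_j = 0$. The integration constant is a germ independent of $z_j$ and hence lies in $(\Gamma_{\C_j \times \prod_{i\neq j}W_i}\Oh{})_0 \subset \Lambda_{I,0}$ because $j \in I$; summing, $f \in \Lambda_{I,0}$.

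For the converse directions, the strategy is to exhibit explicit witnesses. For $(i)$ with $j \in I$, I would take the germ $f = \prod_{i \in I} z_i^{-1}$: then $z_j f = \prod_{i \in I \setminus \set{j}} z_i^{-1}$ extends across $z_j = 0$, so it lies in $(\Gamma_{\C_j \times \prod_{i\neq j}W_i}\Oh{})_0 \subset \Lambda_{I,0}$, while $f$ itself should not be in $\Lambda_{I,0}$. For $(ii)$ with $j \notin I$, I would take $f = \prod_{k \in I}\log z_k$ (interpreted as $f = 1$ when $I = \varnothing$): this satisfies $\partial_j f = 0$, and I claim $[f] \neq 0$ in $\Oh{I,0}$.

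The hard part will be proving these non-membership statements, since the sum defining $\Lambda_I$ a priori admits cancellations across summands. For the first witness, I would assume a decomposition $f = \sum_{k \in I} g_k$ with $g_k$ holomorphic across $z_k = 0$ and iteratively extract the polar order along each axis $z_k = 0$ to contradict the prescribed form of $f = \prod_{i \in I}z_i^{-1}$. For the second witness, the cleaner route is via monodromy: every $g_k$ in the $k$-th summand of $\Lambda_{I,0}$ is fixed by the monodromy operator $M_k$ around $z_k = 0$ (since it extends across that axis), so the composition $\prod_{k \in I}(M_k - \Id)$ annihilates all of $\Lambda_{I,0}$; whereas it sends $\prod_{k \in I}\log z_k$ to the nonzero constant $(2\pi i)^{|I|}$, yielding the required contradiction.
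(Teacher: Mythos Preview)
Your approach is sound and in several places more explicit than the paper's own argument, which is quite terse. The paper handles (i) by observing that multiplication by $z_j$ is bijective on a space $(\Gamma_{\prod Z_i}\Oh{})_0$ exactly when the $j$-th factor is $W_j$, and then asserts that the statement for the quotient follows ``immediately''; for (ii) it notes that primitives exist on the simply connected numerator and that the ambiguity --- a germ constant in $z_j$ --- lies in $\Lambda_{I,0}$ precisely when $j\in I$. Your injectivity argument for (ii) with $j\in I$, antidifferentiating each summand $g_k$ separately and absorbing the constant into the $j$-th summand, is correct and makes explicit what the paper leaves implicit. Your monodromy argument for the second witness is also correct and is in fact the very mechanism the paper invokes in the next lemma to establish nonvanishing in $\Oh{I,0}$; in that sense you are anticipating a tool the paper only introduces later.

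The one genuinely underspecified step is the non-membership of your first witness $f=\prod_{i\in I}z_i^{-1}$. The phrase ``iteratively extract the polar order along each axis'' is problematic: elements of $(\Gamma_{\C_k\times\prod_{i\neq k}W_i}\Oh{})_0$ need not admit Laurent expansions in the variables $z_l$ with $l\neq k$ (think of $\log z_l$), so ``polar order'' is not well defined there, and cancellations between summands cannot be ruled out on those grounds. The monodromy operator will not help directly either, since $\prod_i z_i^{-1}$ is single-valued. A clean fix is to reuse your monodromy idea with a modified witness: take $f=z_j^{-1}\prod_{k\in I\setminus\{j\}}\log z_k$. Then $z_j f$ lies in the $j$-th summand of $\Lambda_{I,0}$, while applying $\prod_{k\in I\setminus\{j\}}(M_k-\Id)$ (these commute, each kills its own summand, and each preserves the $j$-th summand) sends $\Lambda_{I,0}$ into the $j$-th summand but sends $f$ to $(2\pi i)^{|I|-1}/z_j$, which is not holomorphic across $z_j=0$; hence $f\notin\Lambda_{I,0}$. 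Alternatively you can keep your original $f$ and first apply $\partial_k^{-1}$ for $k\in I\setminus\{j\}$ --- these preserve $\Lambda_{I,0}$ by the very argument you gave for (ii) --- to reduce to this modified witness.
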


\begin{proof}
Let $j \in \mathcal{P} ( \set{1, \ldots, n})$.
\begin{itemize}
\item [(i)] \begin{spacing}{1.2} For $k = 1, \ldots  , n$ we use $Z_k$ as dummy for $\C_k$ or $W_k$. 
The inverse of $z_j$ fulfils that \mbox{$\frac{1}{z_j} \in (\Gamma_{ \prod_{i =1 }^n Z_i } \Oh{})_0$} if and only if $Z_j = 
W_j$. Thus $z_j$ acts bijective on $(\Gamma_{ \prod_{i =1 }^n Z_i } \Oh{})_0$ if and only if $Z_j = W_j$. As $\Lambda_{I,0}= (\sum_{k \in I} \Gamma_{\C_k  
\times \prod_{i=1, i \neq k}^n W_i} \Oh{})_0$, we immediately see that $z_j$ acts bijective on $\Oh{I,0} \cong \frac{(\Gamma_{\prod_{i=1}^n W_i} 
\Oh{})_0}{\Lambda_{I,0}}$ if and only if $j \notin I$. 
 \item [(ii)]  Let $f(z_1, \ldots , z_n) \in (\Gamma_{\prod_{i=1}^n W_i} \Oh{})_0$. As ${\prod_{i=1}^n W_i}$ is simply connected 
there exists a function  $F(z_1, \ldots, z_n) \in (\Gamma_{\prod_{i=1}^n W_i} \Oh{})_0$ such that $\partial_j F = f$. The other 
primitives of $f$ w.\,r.\,t.\ $\partial_j$ are given by $F(z_1, \ldots, z_n) + C(z_1, \ldots , z_{j-1}, z_{j+1}, \ldots ,z_n)$, where $C  \in  
(\Gamma_{\C_j  \times \prod_{i=1, i  \neq j}^n W_i} \Oh{})_0 $ as $C$ does not depend on $z_j$.
Clearly, $j \in I$ if and only if for any such $C$ it follows that $C \in \Lambda_{I,0}$. Now, we see that functions in $\Oh{I,0}$ have a uniquely defined 
primitive w.\,r.\,t.\ $\partial_j$ if and only if $j \in I$ (constants etc.\ move into the denominator of $\Oh{I,0}$). \qedhere 
\end{spacing}
\end{itemize}%
\end{proof}

\begin{lem} \label{lem:z_r^A}
For every $r \in \set{1, \ldots,n}$ fix a branch of the logarithm on $\C_r \setminus \R_{\geq 0}$. Let $M \in \N^+$ and let $A$ denote a $M \times 
M$-matrix with values in $\C$. We set
\begin{displaymath}
 z_r^A \vcentcolon= \exp ( A \cdot \ln (z_r)).
 \end{displaymath}
$z_r^A$ is considered as a matrix with entries in $(\Gamma_{\prod_{i=1}^n W_i} \Oh{})_0$ and all entries of $z_r^A$ are invertible w.\,r.\,t.\ multiplication of functions in $(\Gamma_{\prod_{i=1}^n 
W_i} \Oh{})_0$ and  $(\Gamma_{\C_{t} 
\times \prod_{i=1, i \neq t}^n W_i} \Oh{})_0$ for $t \neq r$. Then:
\begin{itemize}
 \item [(i)] The matrix $z_r^A$ is invertible in $(\Gamma_{\prod_{i=1}^n W_i} \Oh{})_0$ and $ (\Gamma_{\C_{t} \times 
\prod_{i=1, i \neq t}^n W_i } \Oh{})_0 $ for $t \neq r$.
 \item [(ii)] Let $I = \set{m_1 , \ldots , m_{|I|}} \in \mathcal{P} ( \set{1, \ldots, n})$ and $\set{l_1, \ldots, l_{n-|I|}} = 
\set{1, \ldots ,n} \setminus I$. Assume we are given pairwise commuting $M \times M$-matrices $ A_{m_1}, \ldots, A_{m_{|I|}}, A_{l_1}, \ldots, 
A_{l_{n-|I|}}$ with values in~$\C$, and the eigenvalues of $A_{m_1}, \ldots, A_{m_{|I|}}$ lie in $\Sigma$. 
Let $\lambda = (\lambda_1 , \ldots, \lambda_M)^T \in \C^M$  and 
\begin{gather*}
\tilde{\mathcal{F}} \vcentcolon= 
z_{l_1}^{A_{l_1}} \cdot \ldots \cdot  z_{l_{n-|I|}}^{A_{l_{n-|I|}}} \cdot z_{m_1}^{A_{m_1}} \cdot \ldots \cdot  z_{m_{|I|}}^{A_{m_{|I|}}}.\\
\begin{aligned}
\text{Then:} \hskip 1em &  & \partial_{m_1}^{-1}  \ldots  \partial^{-1}_{m_{|I|}} \tilde{\mathcal{F}} \cdot \lambda  \in (\Lambda_{I,0})^M \  \Longleftrightarrow \ \lambda 
= (0, \ldots, 0)^T  \hskip 1.5em
 \end{aligned}
 \end{gather*}
\end{itemize}
\end{lem}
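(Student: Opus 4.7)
For part (i), I take $z_r^{-A} := \exp(-A\ln z_r)$ as the candidate inverse. Since $A$ and $-A$ commute, $z_r^A z_r^{-A} = \exp(0) = I$, so $z_r^{-A}$ is indeed the two-sided inverse. Writing $A$ in Jordan form, every entry of $z_r^{\pm A}$ becomes a finite $\C$-linear combination of terms $z_r^\alpha (\ln z_r)^p/p!$ with $\alpha$ an eigenvalue of $\pm A$ and $p \geq 0$; each such term is holomorphic on $\prod_i W_i$ (since $\ln z_r$ is single-valued on $W_r$) and is independent of $z_t$ for $t \neq r$, so it also lies in $(\Gamma_{\C_t\times\prod_{i\neq t}W_i}\Oh{})_0$.

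For part (ii), the direction ``$\Leftarrow$'' is trivial. For ``$\Rightarrow$'', my first step is to reduce the statement to $\tilde{\mathcal{F}}\lambda$ itself: each summand $(\Gamma_{\C_{m_k}\times\prod_{i\neq m_k}W_i}\Oh{})_0$ of $\Lambda_{I,0}$ is stable under every $\partial_{m_j}$, so $\Lambda_{I,0}$ is stable under differentiation; hence $\partial_{m_1}^{-1}\cdots\partial_{m_{|I|}}^{-1}\tilde{\mathcal{F}}\lambda\in(\Lambda_{I,0})^M$ implies, after applying $\partial_{m_1}\cdots\partial_{m_{|I|}}$, that $\tilde{\mathcal{F}}\lambda\in(\Lambda_{I,0})^M$. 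It therefore suffices to show $\tilde{\mathcal{F}}\lambda\in(\Lambda_{I,0})^M \Rightarrow \lambda = 0$. By pairwise commutativity of the $A_r$, I decompose $\C^M$ into joint generalized eigenspaces $W_\mu$ and argue separately on each; on $W_\mu$, write $A_{m_k} = \alpha_k I + N_k$ with $\alpha_k\in\Sigma$ and $N_k$ nilpotent. The key ingredient is the monodromy identity
\[
(M_{m_k} - I)(\tilde{\mathcal{F}}\lambda) = \tilde{\mathcal{F}}(e^{2\pi i A_{m_k}} - I)\lambda,
\]
together with the observation that any $f = \sum_{k\in I}f_k \in (\Lambda_{I,0})^M$ (with $f_k$ holomorphic across $z_{m_k}=0$) satisfies $\prod_{k\in I}(M_{m_k}-I)f = 0$. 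Combining these and using the invertibility of $\tilde{\mathcal{F}}$ from (i) yields $\prod_{k\in I}(e^{2\pi i A_{m_k}}-I)^{r_k}\lambda = 0$ for all $r_k\geq 1$. When all $\alpha_k \neq -1$ on $W_\mu$, each factor $e^{2\pi i A_{m_k}}-I$ is invertible (since $e^{2\pi i\alpha_k}\neq 1$ for $\alpha_k\in\Sigma\setminus\{-1\}$, as $\Sigma$ contains no integer besides $-1$), so $\lambda_\mu = 0$.

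The main obstacle is the remaining case, where some $\alpha_k = -1$ on $W_\mu$: then $e^{2\pi i A_{m_k}}-I$ becomes nilpotent and the pure monodromy argument degenerates. Already the toy case $n=2$, $I=\{1,2\}$, $A_{m_1}=A_{m_2}=-1$ gives $\tilde{\mathcal{F}}\lambda = \lambda\,z_1^{-1}z_2^{-1}$, which has trivial monodromy around both axes yet cannot be written as $f_1+f_2$ with each $f_k$ regular along $z_{m_k}=0$ (a residue along $z_1=0$ of such an $f_2$ would have to be holomorphic in $z_2$, contradicting the required $z_2^{-1}$). To handle the general nilpotent case I plan to expand $\tilde{\mathcal{F}}\lambda_\mu$ as a Laurent-type series in $z_{m_k}$ along each axis with $\alpha_k=-1$, and extract the coefficient of $z_{m_k}^{-1}$ (suitably twisted by the polynomials in $\log z_r$ coming from the $N_k$'s) as an obstruction that cannot be cancelled by any contribution from $\Lambda_{I,0}$. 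Iterating this residue-style extraction over all $k\in I$ with $\alpha_k=-1$, and combining with the monodromy-based invertibility established above in the $\alpha_k\neq-1$ directions, should produce an injective linear map $\lambda_\mu\mapsto(\text{residue})$, forcing $\lambda_\mu=0$. Making this extraction precise in the presence of the nilpotent parts and the log-polynomials is the technical heart of the argument.
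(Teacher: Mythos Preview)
Your proof of (i) is fine and essentially equivalent to the paper's (the paper computes the determinant via Jordan form; you exhibit the inverse $z_r^{-A}$ directly).

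For (ii), your monodromy framework is exactly right, and your identity $\prod_{k\in I}(M_{m_k}-\Id)f = 0$ for $f \in (\Lambda_{I,0})^M$ is the same key observation the paper uses. The source of your $\alpha_k=-1$ obstacle is your very first reduction step: you apply $\partial_{m_1}\cdots\partial_{m_{|I|}}$ to remove the antiderivatives and then work with $\tilde{\mathcal{F}}\lambda$. This is valid, but it discards precisely the information needed for the eigenvalue $-1$.

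The paper does \emph{not} differentiate. It peels off the invertible factors $z_{l_j}^{A_{l_j}}$ using (i) and then applies $\var_{m_{|I|}}\circ\cdots\circ\var_{m_1}$ directly to $\partial_{m_1}^{-1}z_{m_1}^{A_{m_1}}\cdots\partial_{m_{|I|}}^{-1}z_{m_{|I|}}^{A_{m_{|I|}}}\lambda$. The point is that $\var_l\bigl(\partial_l^{-1} z_l^{A_l}\bigr)$ is an invertible matrix for \emph{every} $A_l$ with eigenvalues in $\Sigma$. For a Jordan block $J_a$ with $a\neq -1$ one gets $(J_a+\Id)^{-1}z_l^{J_a+\Id}(e^{2\pi i(J_a+\Id)}-\Id)$, invertible since $a+1\notin\Z$; for $a=-1$ the antiderivative $\partial_l^{-1}\exp(J_{-1}\ln z_l)$ is (up to a constant matrix) upper-triangular with $\ln z_l$ on the diagonal, so its variation is upper-triangular with $2\pi i$ on the diagonal and hence invertible. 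In effect the antiderivative shifts the exponent from $-1$ to $0$, turning trivial monodromy of $z_l^{-1}$ into the nontrivial monodromy of $\ln z_l$.

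So your proposed residue extraction is unnecessary: skip the differentiation step and run the same monodromy argument on the original expression with the $\partial_{m_k}^{-1}$'s in place. This handles all eigenvalues in $\Sigma$ uniformly and finishes the proof in one stroke.
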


\begin{proof}
\begin{itemize}
\item [(i)] This becomes clear by passing to the Jordan normal form $J$ of $A$. Let $\mu_1, \ldots , \mu_q \in \C$ denote the eigenvalues of $A$. Then,
\begin{displaymath}
 \det ( \exp (A \cdot \ln (z_r)) ) = \det ( \exp (J \cdot \ln (z_r))) = \prod_{i=1}^q ( z_r^{\mu_i} )^{p_i} \neq 0
\end{displaymath}
where $p_1, \ldots, p_q \in \N^+ $. This yields the invertibility of $\exp (A \cdot \ln (z_r))$.
\item [(ii)] We prove ``$\Rightarrow$''. For simplicity let $I = \set{1, \ldots, |I|}$. By part (i), the claim is equivalent to 
\begin{align*}
 \partial_{1}^{-1}    z_{1}^{A_{1}} \cdot \ldots \cdot   \partial^{-1}_{{|I|}}
z_{{|I|}}^{A_{{|I|}}} \cdot \left( \begin{smallmatrix}
                                \lambda_1 \\ \vdots \\ \lambda_M
                               \end{smallmatrix} \right)
                               \in (\Lambda_{I,0})^M .
\end{align*}
This means, for $l =1, \ldots, |I|$, we find $f_l (z_1, \ldots, z_n)  \in  \Big( (\Gamma_{\C_l  \times \prod_{i=1,  i \neq l}^n W_i} \Oh{})_0 
\Big)^M $ such that 
\begin{align*}
 \partial_{1}^{-1}    z_{1}^{A_{1}} \cdot \ldots \cdot   \partial^{-1}_{{|I|}}
z_{{|I|}}^{A_{{|I|}}} \cdot \left( \begin{smallmatrix}
                                \lambda_1 \\ \vdots \\ \lambda_M
                               \end{smallmatrix} \right) = \sum_{l=1}^{|I|} f_l (z_1, \ldots, z_n) .
\end{align*}
Let us apply $\var_{|I|} \circ \cdots \circ \var_{1}$ to both sides of the equation where $\var_l$ was given by $M_l - \Id$:
\begin{itemize}
 \item [$\bullet$] Let us treat the (LHS): We receive
\begin{displaymath}
 (\var_{|I|} \circ \ldots \circ \, \var_1) ( \text{LHS})= 
 \var_1 ( \partial^{-1}_1 z_1^{A_1} ) \cdot \ldots \cdot \var_{|I|} ( \partial^{-1}_{|I|} z_{|I|}^{A_{|I|}} ) \cdot \left( \begin{smallmatrix}
                                \lambda_1 \\ \vdots \\ \lambda_M
                               \end{smallmatrix} \right) .
\end{displaymath}
Let us prove that $\var_l ( \partial^{-1}_l z_l^{A_l})$ is invertible in $(\Gamma_{\prod_{i=1}^n W_i} 
\Oh{})_0$: We may pass to a single Jordan block $J_a$ with eigenvalue $a \in \Sigma$ for our arguments. 
 First, let $a \neq -1$. Then
\begin{align*}
\var_l \Big( \partial^{-1}_l \exp ( J_{a} \ln (z_l) ) \Big) = 
(J_{a} + \Id)^{-1} \cdot \exp ((J_{a} + \Id) \ln (z_l)) \cdot \big( e^{2\pi i (J_{a} + \Id)} - \Id \big) .
\end{align*}
Using part (i) and $\Spec (J_{a} + \Id) \subset \C \setminus \Z$, this is an invertible matrix in 
$(\Gamma_{\prod_{i=1}^n W_i} \Oh{})_0$.
Now, let $a=-1$.  
The matrix $\partial^{-1}_l \exp ( J_{-1} \ln (z_l))$ is (up to a matrix which is independent of $z_l$) an upper-triangular matrix with $\ln 
(z_l)$ on the diagonal. Hence, the matrix \mbox{$\var_l \big( \partial_l^{-1} \exp ( J_{-1} \ln (z_l) ) \big)$} is an upper-triangular matrix with $2 
\pi i$ as diagonal entry, and therefore it is invertible in~$(\Gamma_{\prod_{i=1}^n W_i} \Oh{})_0$.

\item [$\bullet$] Now, consider the (RHS): 
Using $M_1 f_1 = f_1$ and $M_1 \big( \sum_{l=1}^{|I|} f_l  \big) = M_1 f_1 + M_1 \big( \sum_{l=2}^{|I|} f_l  \big) $, we receive 
\begin{align*}
&(\var_{|I|} \circ \ldots \circ  \var_1) ( \text{RHS})=  
( \var_{|I|} \circ \ldots \circ  \var_2) \Big(  M_1 \big( \textstyle \sum\limits_{l=1}^{|I|} 
\displaystyle f_l  \big) - \textstyle \sum\limits_{l=1}^{|I|} \displaystyle f_l  \Big)
 =\\ &= ( \var_{|I|} \circ \ldots \circ  \var_2) \Big(  M_1 \big( \textstyle \sum\limits_{l=2}^{|I|} \displaystyle f_l  \big) - 
\textstyle \sum\limits_{l=2}^{|I|} \displaystyle  f_l \Big) = (\var_{|I|} \circ \ldots \circ \var_1) ( \textstyle  \sum\limits_{l=2}^{|I|} 
\displaystyle f_l).
\end{align*}
As the variations commute on the left hand side (LHS), we receive furthermore 
\begin{align*}
&(\var_{|I|} \circ \ldots \circ \var_1) \big( \textstyle \sum\limits_{l=2}^{|I|} \displaystyle f_l \big) = 
(\var_{|I|} \circ \ldots \circ \var_3 \circ 
\var_1 \circ \var_2 ) \big( \textstyle \sum\limits_{l=2}^{|I|}  \displaystyle  f_l \big) = \\
& =(\var_{|I|} \circ \ldots \circ \var_3 \circ \var_1 \circ \var_2) \big( \textstyle \sum\limits_{l=3}^{|I|} f_l \big)  .%
\end{align*}
Continuing this process, it yields
$ \displaystyle (\var_{|I|} \circ \ldots \circ  \var_1) ( \text{RHS}) = \left( 0, \ldots, 0 \right)^T$.
\end{itemize}
This leads to the equality
\begin{align*}
\var_1 ( \partial^{-1}_1 z_1^{A_1} ) \cdot \ldots \cdot \var_{|I|} ( \partial^{-1}_{|I|} z_{|I|}^{A_{|I|}} ) \cdot \left( \begin{smallmatrix}
                                \lambda_1 \\ \vdots \\ \lambda_M
                               \end{smallmatrix} \right) = \left( \begin{smallmatrix}
                                0 \\ \vdots \\ 0
                               \end{smallmatrix} \right) . 
\end{align*}
The invertibility of all the matrices $\var_l ( \partial^{-1}_l z_l^{A_l})$ gives  \mbox{$\lambda_1 = \ldots = \lambda_M = 0$} as claimed. \qedhere
\end{itemize}
\end{proof}

\vskip 1em
Now, let us consider how the quiver representation looks like after applying $\mathcal{A}$ to a quiver \mbox{$\D$-module}.

\begin{prop} \label{prop:thm_proof_prop_1}
Let $ \V_n = (V_I, B_{I \cup \set{i}, I}, B_{I, I \cup \set{i}})$ denote an object in $\Quis_n$ with $E \V_n$ as corresponding quiver 
$\D$-module. Then, for every $I \in \mathcal{P}(\set{1, \ldots , n})$, we are given a canonical isomorphism 
\begin{displaymath}
\mathfrak{a} \colon V_I^\ast \overset{\cong }{\longrightarrow} \Hom_{\D_{X,0}}( (E\V_n)_0, \Oh{I,0}) \, .
\end{displaymath}
\end{prop}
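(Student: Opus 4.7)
My plan has four steps, three structural and one verification.

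The first step is to unpack the Hom. By the presentation of $E\V_n$ in \prettyref{defn:quiver_dmodule}, a $\D$-linear morphism $\phi\colon (E\V_n)_0 \to \Oh{I,0}$ is equivalent to a family of $\C$-linear maps $\phi_J\colon V_J \to \Oh{I,0}$, $J \in \mathcal{P}(\set{1,\ldots,n})$, defined by $\phi_J(v_J) := \phi([1 \otimes \omega_J \otimes v_J])$, subject to the compatibility relations
\begin{align*}
 \partial_i\,\phi_J(v_J) &= \phi_{J \cup \set{i}}\bigl(B_{J \cup \set{i}, J}(v_J)\bigr),\\
 z_i\,\phi_{J \cup \set{i}}(v_{J \cup \set{i}}) &= \phi_J\bigl(B_{J, J \cup \set{i}}(v_{J \cup \set{i}})\bigr),
\end{align*}
for every $i \notin J$. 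By \prettyref{lem:zj_dj_bij}, $\partial_i$ is bijective on $\Oh{I,0}$ for $i \in I$ and $z_i$ is bijective for $i \notin I$, so I can invert the relations to express $\phi_J$ in terms of $\phi_I$ via iterations of $\phi_J(v) = \partial_i^{-1}\phi_{J \cup \set{i}}(B_{J \cup \set{i}, J}(v))$ (for $i \in I \setminus J$) and $\phi_J(w) = z_i^{-1}\phi_{J \setminus \set{i}}(B_{J \setminus \set{i}, J}(w))$ (for $i \in J \setminus I$). Every $J$ is reachable from $I$ by these single steps, so $\phi$ is uniquely determined by $\phi_I$.

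The second step constructs $\mathfrak{a}$ explicitly. A direct computation in $(E\V_n)_0$ (using $\partial_j z_j = z_j\partial_j + \Id$) shows that $z_j\partial_j$ acts on the subspace $[\omega_I \otimes V_I]$ as the $\C$-linear endomorphism
\begin{displaymath}
T_j := \begin{cases}
 B_{I, I \setminus \set{j}} \circ B_{I \setminus \set{j}, I} - \Id_{V_I} & \text{for } j \in I,\\
 B_{I, I \cup \set{j}} \circ B_{I \cup \set{j}, I} & \text{for } j \notin I,
\end{cases}
\end{displaymath}
whose spectrum lies in $\Sigma$ for $j \in I$ and in $\Sigma_1$ for $j \notin I$ by the hypothesis $\V_n \in \Quis_n$. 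The quiver commutativity conditions and the relation $[\partial_i,z_j] = \delta_{ij}\Id$ yield $T_i T_j = T_j T_i$, so the $\operatorname{End}(V_I)$-valued multivalued holomorphic function
\begin{displaymath}
 F(z) := z_1^{T_1} z_2^{T_2} \cdots z_n^{T_n} \in (\Gamma_{\prod_i W_i}\Oh{})_0 \otimes_\C \operatorname{End}(V_I)
\end{displaymath}
is well-defined and satisfies $z_j\partial_j F(z) = F(z)\,T_j$ for every $j$. For $\lambda \in V_I^*$ I set $\mathfrak{a}(\lambda)_I(v) := [\lambda(F(z)v)] \in \Oh{I,0}$ and extend to $\mathfrak{a}(\lambda)_J$ for all $J$ via the first step.

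In the third step I verify that $\mathfrak{a}$ is well-defined and bijective. Well-definedness amounts to showing that the extended family satisfies \emph{both} types of relations at every node; this propagates from the key identity $z_j\partial_j F(z) = F(z)\,T_j$ using the $\Qui_n$-commutativity conditions (e.g.\ $y_{I,i}\circ y_{I \cup \set{i},j} = y_{I,j}\circ y_{I \cup \set{j},i}$ ensures that iterating $z_i^{-1},z_j^{-1}$ is path-independent, and the mixed case is handled similarly). Injectivity: if $\mathfrak{a}(\lambda)=0$, then $\lambda(F(z)v) \in \Lambda_{I,0}$ for every $v \in V_I$; after passing to a simultaneous upper-triangularization of the $T_j$'s, \prettyref{lem:z_r^A}(ii) applied with $A_{m_k} = T_{m_k}$ for $m_k \in I$ and $A_{l_k} = T_{l_k}$ for $l_k \notin I$ forces $\lambda=0$. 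Surjectivity: a compatible family $(\phi_J)$ imposes the necessary condition $z_j\partial_j \phi_I(v) = \phi_I(T_j v)$ for all $j$; together with the uniqueness of matrix logarithms (\prettyref{prop:matrix_ln_unique}) and the spectral constraints from $\Quis_n$, this system of ODEs in $V_I^* \otimes \Oh{I,0}$ has solution space spanned by $v \mapsto \lambda(F(z)v)$ for $\lambda \in V_I^*$, yielding the unique $\lambda$ with $\phi_I = \mathfrak{a}(\lambda)_I$.

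The main obstacle I anticipate is the well-definedness verification. One must check that the inductive extension of $\mathfrak{a}(\lambda)_I$ is independent of the chosen path from $I$ to $J$, and that both families of relations hold at each intermediate node (not only the one used for the extension). This reduces to a careful combinatorial bookkeeping using the $\Qui_n$-commutativity conditions and the canonical commutator $[\partial_i,z_j] = \delta_{ij}\Id$.
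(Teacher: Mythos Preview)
Your first two steps and your injectivity argument track the paper's proof closely: the paper unpacks the Hom exactly as you do, runs the same reduction algorithm to express every $\phi^K$ through $\phi^I$, arrives at the reduced system $z_j\partial_j\phi^I=\phi^I\circ T_j$, builds the same fundamental matrix $\mathscr F=\prod_j z_j^{T_j}$, and proves injectivity via \prettyref{lem:z_r^A} (after first applying $\partial_{m_1}^{-1}\cdots\partial_{m_{|I|}}^{-1}$, which you should also do to match the hypothesis of that lemma).

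The gap is in your surjectivity argument. You assert that the system $z_j\partial_j\phi_I=\phi_I\circ T_j$ in $\Hom_\C(V_I,\Oh{I,0})$ has solution space exactly $\{\,v\mapsto\lambda(F(z)v)\mid\lambda\in V_I^\ast\,\}$, invoking \prettyref{prop:matrix_ln_unique} and the spectral constraint. But \prettyref{prop:matrix_ln_unique} is a statement about matrix logarithms of finite-dimensional linear maps; it says nothing about the solution space of a system of Euler operators acting on the \emph{quotient} space $\Oh{I,0}$. For the coordinates $j\in I$ you are solving $z_j\partial_j\phi=\phi\,T_j$ modulo $\Lambda_{I,0}$, and it is not automatic that every solution in the quotient lifts to one of the form $\lambda\cdot z_j^{T_j}$. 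Even the finiteness of the solution space is not obvious here; the paper in fact cites \cite{GGM2} for this.

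The paper does \emph{not} argue surjectivity by solving the ODE. Instead it proves the dimension bound $\dim_\C\Hom_{\D_0}((E\V)_0,\Oh{I,0})\le\dim_\C V_I$ by invoking the identity $\dim_\C\Hom_{\D_0}((E\V)_0,\Oh{I,0})=\mult_{z_I^\ast}E\V$ from \cite{GGM2} and then computing that multiplicity via a Hilbert--Samuel calculation: one localises $\gr^F E\V$ at a suitable point of $T^\ast\C^n$, strips the module down to the single summand indexed by $I$, and compares with $n_I$ copies of $\D/(z_1,\ldots,z_{|I|},\partial_{|I|+1},\ldots,\partial_n)$, which has multiplicity $1$. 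Injectivity of your map $\mathfrak a$ then forces equality. Your proposal is missing this entire step or a genuine substitute for it; a direct ODE argument in $\Oh{I,0}$ may be possible but would require real work (lifting solutions across the quotient, handling the interaction of the $n$ variables) that you have not supplied.
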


\begin{proof}
We abbreviate $\V = \V_n$. The proof of this lemma will be carried out in several steps:
\begin{itemize}
 \item [(i)] We are given the following natural isomorphism:
 \begin{gather*}
 \begin{aligned}
\Hom&_{\D_{0}}( (E \V)_0, \Oh{I,0}) = \biggl\{ 
\phi \in \Hom_{\D_{0}}  \Bigl( \textstyle \bigoplus\limits_{J } \displaystyle
( \D_0 \otimes \overline{\Omega}_J \otimes  V_J ) ,
\Oh{I,0} \Bigr) 
\, \Big| \, \\
 & \phi  ( \partial_j \otimes \omega_J \otimes v_J - 
1 \otimes \omega_{J  \cup \set{j}} \otimes B_{J \cup \set{j}, J}(v_J)) = 0, \\
 &\phi ( z_j \otimes \omega_{J \cup \set{j}} \otimes v_{J \cup \set{j}} - 
1 \otimes \omega_J \otimes B_{J, J \cup \set{j}} (v_{J \cup \set{j}}) ) =0 \text{  for } J \neq \set{1, \ldots, n}, j \notin J \biggr\} \cong \\
 \cong  \biggl\{ & \textstyle \bigoplus\limits_{J} \displaystyle \phi^J \in \textstyle \bigoplus\limits_{J } \displaystyle \Hom_{\C}  \left(   V_J , 
\Oh{I,0} \right) 
\, \Big| \, \partial_j \cdot \phi^J(  v_J )- 
\phi^{J \cup \set{j}} ( B_{J \cup \set{j}, J}(v_J)) = 0,  \\
 & z_j \cdot \phi^{J \cup \set{j}} ( v_{J \cup \set{j}}) - \phi^J (
 B_{J, J \cup \set{j}} (v_{J \cup \set{j}}) ) =0  \text{  for } J \neq \set{1, \ldots, n}, j \notin J \biggr\} .
\end{aligned}%
\end{gather*}

\item [(ii)] Consider the following system of equations from step (i)
\begin{align*}
\begin{split}
 \partial_j \cdot \phi^J -  \phi^{J \cup \set{j}} \circ B_{J \cup \set{j}, J} &= 0 \\
z_j \cdot \phi^{J \cup \set{j}} -  \phi^J \circ B_{J, J \cup \set{j}} &=0 
\end{split}  \tag{$\widetilde{\star}$} \label{eq:PDE_1}
 \end{align*}
where $\phi^J \in \Hom_{\C}  \left(   V_J , \Oh{I,0} \right)$ and $J \neq \set{1, \ldots, n}$, $j \in \set{1, \ldots, n} \setminus J $. 
We use the following algorithm \prettyref{alg:ALG} to express step by step every $\phi^K$ uniquely in 
terms of $\phi^I$ for $K \in \mathcal{P} ( \set{1, \ldots ,n}) \setminus I$: \label{alg:ALG}

 \par
\begingroup
\leftskip=1.1em
We can express $K \cup I$ as the union of the three disjoint sets $K_1 \vcentcolon= K \cap I$, $K_2 \vcentcolon= K \setminus K_1$, $K_3 \vcentcolon= I 
\setminus K_1$. Then $K_2 \neq \varnothing$ or $K_3 \neq \varnothing$, as $K \neq I$. 
\prettyref{lem:zj_dj_bij} yields that $z_l$, $\partial_m$ act bijective on $\Oh{I,0}$ for $l \in K_2$ and $m \in 
K_3$.\\
1.\ Step: If $K_2 = \varnothing$, skip this step. Otherwise we have for $l_1 \in K_2$: 
$$z_{l_1} \cdot  \phi^K - \phi^{K \setminus \set{l_1}} \circ B_{K \setminus \set{l_1}, K}= 0 \ \Longleftrightarrow \ \phi^K = \frac{1}{z_{l_1}} \cdot 
\left( \phi^{K \setminus \set{l_1}} \circ B_{K \setminus \set{l_1}, K} \right)  $$
For $l_2 \in K_2 \setminus \set{l_1}$ use the equation
$$z_{l_2} \cdot \phi^{K \setminus \set{l_1}} - \phi^{K \setminus \set{l_1,l_2}} \circ B_{K \setminus \set{l_1,l_2}, K \setminus \set{l_1}} =0 $$ 
to express $\phi^K$ in terms of $\phi^{K \setminus \set{l_1,l_2}}$. Continue until $\phi^K$ is expressed in terms of $\phi^{K_1}$.\\
2. Step: If $K_3 = \varnothing$, we already expressed $\phi^K$ in terms of $\phi^I$. Otherwise we have for $m_1 \in K_3$: 
\begin{displaymath} \hspace*{1.1em}
\partial_{m_1} \cdot \phi^{K_1}  -   \phi^{K_1 \cup \set{m_1}} \circ B_{K_1 \cup \set{m_1}, K_1}= 0\ \Longleftrightarrow \ \phi^{K_1} =  
\partial_{m_1}^{-1} \cdot \left(  \phi^{K_1 \cup \set{m_1}} \circ B_{K_1 \cup \set{m_1}, K_1} \right) 
\end{displaymath}
For $m_2 \in K_3 \setminus \set{m_1}$ use the equation 
\begin{gather*}
\partial_{m_2}  \cdot \phi^{K_1 \cup \set{m_1}}  -   \phi^{K_1 \cup \set{m_1, m_2}} \circ B_{K_1 \cup \set{m_1,m_2}, K_1 \cup \set{m_1}} = 0
\end{gather*}
to express $\phi^{K_1}$ in 
terms of $\phi^{K_1 \cup \set{m_1, m_2}}$. Continue until $\phi^{K_1}$ -- and therefore $\phi^K$ -- is expressed in terms of~$\phi^I$.\\
The order in which we solve for $\phi^I$ in \prettyref{alg:ALG} does not influence the result. This is ensured by the commutativity conditions on the maps $B_{\sbt,\sbt}$ and the fact that 
$z_l$, $\partial_m$ commute for 
$l \notin I$, $m \in I$. Therefore every $\phi^K$ can be uniquely expressed in terms of~$\phi^I$.
\par%
\endgroup%

So clearly \eqref{eq:PDE_1} implies that $\phi^I \in \Hom_{\C} \left(V_I , \Oh{I,0} \right)$ fulfils the system 
\begin{gather*}
 \begin{aligned}
z_l \partial_l \cdot \phi^I -  \phi^I \circ (B_{I , I \cup \set{l}} B_{I \cup \set{l}, I})&= 0 \\
z_m \partial_m \cdot \phi^I -  \phi^I \circ (B_{I , I \setminus \set{m}} B_{I \setminus \set{m}, I} - \Id)&= 0 \end{aligned}
\tag{$\star$} \label{eq:PDE_2}
\end{gather*}
 of $ n$ equations where $l \notin I$, $m \in I$. On the other hand \eqref{eq:PDE_1} is likewise implied 
by \eqref{eq:PDE_2} using \prettyref{alg:ALG} as definition of $\phi^K$ for all $K \in \mathcal{P}(\set{1, \ldots, n}) \setminus I$. 
This shows us that in fact
\begin{align*}
\Hom&_{\D_{0}}( (E\V)_0, \Oh{I,0}) \cong \biggl\{  \phi^I \in  \Hom_{\C}  \left(   V_I , \Oh{I,0} \right) 
\, \Big| \,   
z_l \partial_l \cdot \phi^I (v_I) - \phi^I (B_{I , I \cup \lbrace l \rbrace } B_{I \cup \set{l}, I} (v_I)) = 0 , \\
\begin{split}
&  z_m \partial_m \cdot \phi^I (v_I) - \phi^I ((B_{I , I \setminus \set{m}} B_{I \setminus \set{m}, I} - \Id) (v_I))   = 0  \text{for } l \in \set{1, \ldots,n} \setminus I, \, m \in I \biggr\}.
\end{split} \tag{1} \label{eq:hom}
\end{align*}

 \item [(iii)] The dimension of $\Hom_{\D_0}((E \V)_0, \Oh{I,0}))$ over $\C$ is finite (see \cite{GGM2}). 
We use the following proposition of \cite{GGM2} to give an upper bound hereof:
 \par
\begingroup
\leftskip=1em
\rightskip=1em
Let $z_I^\ast = (z_1 , \ldots , z_n, \xi_1, \ldots , \xi_n ) \in T^\ast \C^n $ verifying $z_i \xi_i = 0$ for all $i$, and \\ $z_i=0 
\Leftrightarrow i \in I$ and $\xi_i \neq 0 \Leftrightarrow i \in I$. Then $ \dim_\C  \Hom_{\D_{0}}( (E\V)_0, \Oh{I,0}) =  \mult_{z_I^\ast} E \V.$
\par%
\endgroup%
We use the definition of multiplicity as given in \cite[Chapter V]{GM-basiccourse}. Supplementary, we use \cite[Subsection II.B)4]{Serre}.
As the definition becomes clear during the following computations, we do not repeat it here.

We use the good filtration on $E \V$ from the proof of \prettyref{thm:E_functor}. Its sections over $U \subset \C^n$, open, are given by
\begin{displaymath}
 F_k E \V (U) = \frac{F_k \D(U) \otimes  \left( \bigoplus_{J} \overline{\Omega}_J \otimes V_J \right)}{ \left( F_k \D (U) \otimes  ( \bigoplus_{J} 
\overline{\Omega}_J \otimes V_J)  \right) \cap \mathcal{J}(U)}
\end{displaymath}
for $k \in \N_0$, and for $k \in \Z \setminus \N_0$ we have $F_k E \V = 0$. 
As before let
\begin{displaymath}
\gr_k^F E \V = \xfrac{F_k E \V}{F_{k-1} E \V}  \ \text{ and } \  \gr^F E\V = \textstyle \bigoplus\limits_{k \in \N_0} \displaystyle 
\gr_k^F E \V .
\end{displaymath}
Let $k \in \N_0$. Fix a point $\tilde{z}_I^\ast = (\tilde{z}_1, \ldots, \tilde{z}_n, \tilde{\xi}_1 , 
\ldots, \tilde{\xi}_n) =\vcentcolon (\tilde{z}_I, \tilde{\xi}_I)$ where $\tilde{z}_i=0 \Leftrightarrow i \in I$ and  
$\tilde{\xi}_i \neq 0 \Leftrightarrow i \in I$. Consider the stalk of $\gr^F E \V$ at $\tilde{z}_I$. Set
$$M \vcentcolon= (\gr^F E\V)_{\tilde{z}_I}.$$
$z_i$ is an invertible element in $F_k \D_{\tilde{z}_I} $ iff $i \notin I$. So let $i \notin I$ and $K \in \mathcal{P}( \set{1, 
\ldots, n} \setminus \set{i} ) $. We denote by $[ P \otimes 
\omega_{K \cup \set{i}} \otimes v_{K \cup \set{i}}]$ the image of
$P \otimes \omega_{K \cup \set{i}} \otimes v_{K \cup \set{i}} \in F_k \D_{\tilde{z}_I} 
\otimes \overline{\Omega}_{K \cup \set{i}} \otimes V_{K \cup \set{i}}$ 
in $(F_k E\V)_{\tilde{z}_I}$.
We have the following identity in $(F_k E \V)_{\tilde{z}_I}$:
\begin{displaymath}
 [P \otimes \omega_{K \cup \set{i}} \otimes v_{K \cup \set{i}} ] = [ z_i^{-1} P \otimes \omega_K \otimes B_{K, K \cup \set{i}} (v_{K \cup \set{i}}) ]
\end{displaymath}
This allows us to ``eliminate'' all summands $[F_k \D_{\tilde{z}_I} \otimes \overline{\Omega}_J \otimes V_J] $ in $(F_k E \V)_{\tilde{z}_I}$ with $J 
\setminus I \neq \varnothing$. Hence, we may assume that 
\begin{displaymath}
( F_k E \V )_{\tilde{z}_I} = 
\frac{F_k \D_{\tilde{z}_I} \otimes  \bigl( \bigoplus_{J \subseteq I} \overline{\Omega}_J \otimes V_J \bigr)}{ \bigl( F_k 
\D_{\tilde{z}_I} \otimes  ( \bigoplus_{J \subseteq I} \overline{\Omega}_J \otimes V_J)  \bigr) \cap \mathcal{J}_{\tilde{z}_I}} 
\ \text{ or } \ E \V = \frac{\D \otimes  \bigl( \bigoplus_{J \subseteq I} \overline{\Omega}_J \otimes V_J \bigr)}{ \bigl( \D \otimes  ( \bigoplus_{J 
\subseteq I} \overline{\Omega}_J \otimes V_J)  \bigr) \cap \mathcal{J}} \, .
 \end{displaymath}
We have $\gr^F \D_{\tilde{z}_I } \cong \Oh{{\tilde{z}_I}} [\xi_1, \ldots, \xi_n]$ and $M$ is a finitely generated $\Oh{{\tilde{z}_I}} [\xi_1, \ldots, \xi_n]$-module. We denote by $\mathcal{M}ax$ the 
maximal ideal of the local ring $\Oh{{\tilde{z}_I}}$. Let 
\begin{displaymath}
Q_{\tilde{\xi}_I} \vcentcolon= \mathcal{M}ax + ( \xi_1 - \tilde{\xi}_1, \ldots , \xi_n - \tilde{\xi}_n) . 
\end{displaymath}
This defines a maximal ideal in $\Oh{{\tilde{z}_I}} [\xi_1, \ldots, \xi_n]$.  
Thus, $\xfrac{M}{Q_{\tilde{\xi}_I} M}$ is a finitely generated $\xfrac{\Oh{{\tilde{z}_I}} [\xi_1, \ldots, \xi_n]}{Q_{\tilde{\xi}_I}} $-vector space. 
Therefore, there exists a polynomial $P_{M, Q_{\tilde{\xi}_I}} (N)$, called Hilbert-Samuel polynomial, and an integer $N_0 \in \N$ such that 
\begin{displaymath}
P_{M, Q_{\tilde{\xi}_I}} (N) = \length (\xfrac{M}{Q_{\tilde{\xi}_I}^{N} M})  \ \text{ for all } N \geq N_0.
\end{displaymath}
The highest degree term of $P$ has the form $\displaystyle \frac{e}{d!} \, N^d$ where $ e \in \N, \, d \in \N$ and by definition
$$e = \mult_{\tilde{z}_I^\ast} E \V.$$
Applying \cite[Proposition 11a) in Subsection II.B)4]{Serre}, we receive
\begin{displaymath}
 P_{M, Q_{\tilde{\xi}_I}} (N) =  
 P_{T^{-1} M, T^{-1} Q_{\tilde{\xi}_I} } (N) \ \text{ for } \ T \vcentcolon= \Oh{{\tilde{z}_I}} [\xi_1, \ldots, \xi_n] 
\setminus Q_{\tilde{\xi}_I}.
\end{displaymath}
So we need to consider the localisation of $M$ at $T$:
\begin{displaymath}
T^{-1} M = \textstyle \bigoplus\limits_{k \in 
\N_0} \displaystyle T^{-1} (\gr_k^F E \V)_{\tilde{z}_I} 
\end{displaymath}
Let $[P \otimes \omega_K \otimes v_K]$ denote the image of $P \otimes \omega_K \otimes v_K \in F_k \D_{\tilde{z}_I} \otimes \overline{\Omega}_K \otimes 
V_K $ in $(\gr^F_k E\V)_{\tilde{z}_I}$ for $K \subsetneqq I$. For every $i \in I \setminus K$ we have the following identity in $(\gr^F_{k+1} 
E\V)_{\tilde{z}_I}$:
\begin{displaymath}
\xi_i \cdot [ P \otimes \omega_K \otimes v_K ] =  [ P \otimes \omega_{K \cup \set{i}} \otimes B_{K \cup \set{i}, K} (v_K)] = 0
\end{displaymath}
Consider this identity in $T^{-1} M$: The map $\xi_i \cdot \_ \colon T^{-1} (\gr_k^F E \V)_{\tilde{z}_I} \rightarrow T^{-1} (\gr_{k+1}^F E 
\V)_{\tilde{z}_I}$ is bijective for $i \in I$, as $\tilde{\xi}_i \neq 0$ for $i \in I$. 
Therefore, $\frac{[ P \otimes \omega_K \otimes v_K ]}{1} =0$ and we may assume that 
\begin{displaymath}
T^{-1} ( F_k E \V )_{\tilde{z}_I} = 
\frac{T^{-1} F_k \D_{\tilde{z}_I} \otimes   \overline{\Omega}_I \otimes V_I }{ \left( T^{-1} F_k 
\D_{\tilde{z}_I} \otimes   \overline{\Omega}_I \otimes V_I  \right) \cap T^{-1} \mathcal{J}_{\tilde{z}_I} } .
\end{displaymath}
Using \cite[Proposition 11a)]{Serre} the other way round, we may assume that 
\begin{displaymath}
( F_k E \V )_{\tilde{z}_I} = 
\frac{F_k \D_{\tilde{z}_I} \otimes   \overline{\Omega}_I \otimes V_I }{ \left( F_k 
\D_{\tilde{z}_I} \otimes   \overline{\Omega}_I \otimes V_I \right) \cap \mathcal{J}_{\tilde{z}_I} } 
\ \text{ or } \
 E \V = \frac{\D \otimes \overline{\Omega}_I \otimes V_I }{\left( \D \otimes \overline{\Omega}_I \otimes V_I \right) \cap \mathcal{J}}.
\end{displaymath}
For simplicity let $I = \set{1, \ldots, |I|}$ for the moment. Set $n_I \vcentcolon = \dim_\C (V_I)$. Consider the following exact sequence of holonomic $\D$-modules
\begin{gather*}
 0 \longrightarrow \ker (\pi) \lhook\joinrel\longrightarrow \widetilde{\mathscr{N}} \overset{\pi}{\relbar\joinrel\twoheadrightarrow} E \V \longrightarrow 
0 \\
\begin{aligned}
\text{where } \widetilde{\mathscr{N}} & \vcentcolon = \xfrac{\D \otimes  V_I }{\left(z_{1} \otimes V_I, \ldots ,z_{{|I|}} \otimes V_I, \partial_{|I|+1}  \otimes V_I, 
\ldots, \partial_{{n} }  \otimes V_I \right)}  \cong \\ &\cong \textstyle \bigoplus\limits_{n_I\text{-times}} \displaystyle  \ \xfrac{\D}{(z_{1}, 
\ldots ,z_{{|I|}}, \partial_{|I|+1}, \ldots, \partial_{ {n} })}  = \vcentcolon \textstyle \bigoplus\limits_{n_I\text{-times}} 
\displaystyle  \ \mathscr{N} .
\end{aligned}
\end{gather*}
This sequence yields  $ \mult_{\tilde{z}_I^\ast} E \V \leq \mult_{\tilde{z}_I^\ast} 
\widetilde{\mathscr{N}}$. 
Furthermore, $ \mult_{\tilde{z}_I^\ast} \widetilde{\mathscr{N}} = n_I \cdot \mult_{\tilde{z}_I^\ast} \mathscr{N}$.
So let us compute $\mult_{\tilde{z}_I^\ast} \mathscr{N}$ where $\tilde{z}_I^\ast = ( 0, \ldots, 0, \tilde{z}_{|I|+1} , \ldots, \tilde{z}_n, \tilde{\xi}_1 , 
\ldots, \tilde{\xi}_{|I|}, 0, \ldots, 0) = \vcentcolon(\tilde{z}_I, \tilde{\xi}_I) $
with $\tilde{z}_{|I|+1} , \ldots, \tilde{z}_n, \tilde{\xi}_1 , 
\ldots, \tilde{\xi}_{|I|} \neq 0$: \\ 
{\setstretch{1.2}We use the good filtration $F_\sbt \mathscr{N}$ on $\mathscr{N}$ which  is induced by the standard filtration 
$F_\sbt \D$ of~$\D$. So, we consider $ (\gr^F \mathscr{N})_{\tilde{z}_I} \cong \C \{ z_{|I|+1} - \tilde{z}_{|I|+1} , \ldots, z_n - 
\tilde{z}_n \} [\xi_1, \ldots,  {\xi}_{|I|} ]$ as a module over $(\gr^F \D )_{\tilde{z}_I} \cong  \C \{ z_1, \ldots 
,z_{|I|}, z_{|I|+1} - \tilde{z}_{|I|+1} , \ldots, z_n - \tilde{z}_n \} [\xi_1, \ldots,  {\xi}_{n}]$. Let $\mathcal{M}ax$ be the maximal ideal of $\C \{ 
z_1, \ldots ,z_{|I|}, z_{|I|+1} - \tilde{z}_{|I|+1} , \ldots, z_n - \tilde{z}_n \}$. We need to compute 
the multiplicity of $(\gr^F \mathscr{N})_{\tilde{z}_I}$ with respect to the maximal ideal $\mathcal{M}ax + ( \xi_1 - \tilde{\xi}_1, \ldots , \xi_{|I|} - 
\tilde{\xi}_{|I|}, \xi_{|I|+1}, \ldots , \xi_n)$ of $(\gr^F \D )_{\tilde{z}_I}$. A shift of coordinates gives us that we equivalently have to treat \par } \vspace*{-1.2\parskip}
\begin{displaymath}
\C \{ z_{|I|+1}  , \ldots, z_n  \} [\xi_1, \ldots,  {\xi}_{|I|} ] \ \text{ as a module over } \ 
  \C \{ z_1, \ldots, z_n  \} [\xi_1, \ldots,  {\xi}_{n} ] ,
     \end{displaymath}
and compute its multiplicity with respect to the maximal ideal
\begin{displaymath}
Q\vcentcolon= (z_1, \ldots, z_n, \xi_1 - \tilde{\xi}_1, \ldots , \xi_{|I|} - 
\tilde{\xi}_{|I|}, \xi_{|I|+1}, \ldots , \xi_n).
\end{displaymath}
So we have to compute
\begin{align*}
\length \left( \frac{\C \{ z_{|I|+1}  , \ldots, z_n  \} [\xi_1, \ldots,  {\xi}_{|I|} ]}{(z_{|I|+1}, \ldots, z_n, \xi_1 - 
\tilde{\xi}_1, \ldots , \xi_{|I|} - 
\tilde{\xi}_{|I|} )^{N} \cdot  \C \{ z_{|I|+1}  , \ldots, 
z_n  \} [\xi_1, \ldots,  
{\xi}_{|I|} ]} \right) .
       \end{align*}
But this is the number of monomials of degree less than $N$ in  $\C \{ z_{|I|+1}  , \ldots, z_n  \} [\xi_1, \ldots,  
{\xi}_{|I|} ]$ which is equal to $\binom{N-1+n}{N-1} $.
This shows us that $\mult_{\tilde{z}_I^\ast} \mathscr{N} =1$ and $\mult_{\tilde{z}_I^\ast} E \V \leq n_I$.

\item [(iv)]  Now, we construct the canonical isomorphism~$\eta_I$ from~$V_I^\ast$ into~\eqref{eq:hom}. For this purpose let \mbox{$\alpha \in V_I^\ast$}. 
We define~$\eta_I ( \alpha)$ as follows:

Let $\set{m_1, \ldots, m_{|I|}} = I$, $\set{l_1, \ldots, l_{n-|I|} } = \set{1 , \ldots, n} \setminus I$. For a moment fix a basis of $V_I$ and 
denote it by $v_{I,1} , \ldots , v_{I,n_I}$. In abuse of notation we denote the matrices corresponding to the maps
$\alpha$, $\mathcal{B}_{I , I \cup \lbrace l \rbrace } \vcentcolon = B_{I , I \cup \lbrace l \rbrace } B_{I \cup \set{l}, I}$ and $ \mathcal{B}_{I , I 
\setminus \set{m}} \vcentcolon = B_{I , I \setminus \set{m}} B_{I \setminus \set{m}, I}$ w.\,r.\,t.\ this basis by the same symbols. 
 We set 
\begin{align*}
 \mathscr{F} & \vcentcolon= z_{l_1}^{\mathcal{B}_{I, I \cup \set{l_1}}} \cdot \ldots \cdot 
 z_{l_{n-|I|}}^{\mathcal{B}_{I, I \cup \set{l_{n-|I|}}}} \cdot
 z_{m_1}^{\mathcal{B}_{I, I \setminus \set{m_1}}-\Id} \cdot \ldots \cdot 
 z_{m_{|I|}}^{\mathcal{B}_{I, I \setminus \set{m_{|I|}}}-\Id} \ \text{ and} \\
 \eta_I (\alpha ) & \vcentcolon = \alpha \cdot \mathscr{F} .
\end{align*}
One verifies directly that $\eta_I (\alpha)$ is indeed an element in \eqref{eq:hom} by plugging it into~\eqref{eq:PDE_2}. 

We need to verify that this construction of $\eta_I$ is independent of the choice of basis of $V_I$. So, let $ \tilde{v}_{I,1}, \ldots, \tilde{v}_{I, 
n_I} $ denote another basis of $V_I$. Let $\tilde{\mathcal{B}}_{I , I \cup \lbrace l \rbrace }$, $\tilde{\mathcal{B}}_{I , I \setminus \set{m}}$ 
and $\tilde{\alpha}$ denote the ma\-tri\-ces corresponding to the linear maps $\mathcal{B}_{I , I \cup \lbrace l \rbrace }$, $\mathcal{B}_{I , I \setminus 
\set{m}}$ and $\alpha$ w.\,r.\,t.\ this new basis. Let~$R$ denote the matrix of the change of coordinates from  $\{ v_{I,1} , \ldots , v_{I,n_I} \}$ to $ 
\{\tilde{v}_{I,1}, \ldots, \tilde{v}_{I, n_I} \}$. 
Let $v_I \in V_I$. We denote by $v_I$ in abuse of notation the vector w.\,r.\,t.\ the basis $\{ v_{I,1} , \ldots , v_{I,n_I} 
\}$ and by $\tilde{v}_I$ the vector w.\,r.\,t.\ the basis $ \{\tilde{v}_{I,1}, \ldots, \tilde{v}_{I, n_I} \}$.  
We receive
\begin{align*}
&\eta_I (\alpha) (v_I) = \alpha \cdot  z_{l_1}^{\mathcal{B}_{I, I \cup \set{l_1}}} \cdot \ldots \cdot 
 z_{l_{n-|I|}}^{\mathcal{B}_{I, I \cup \set{l_{n-|I|}}}} \cdot
 z_{m_1}^{\mathcal{B}_{I, I \setminus \set{m_1}}-\Id} \cdot \ldots \cdot 
 z_{m_{|I|}}^{\mathcal{B}_{I, I \setminus \set{m_{|I|}}}-\Id}  \cdot v_I = \\ 
&=  \tilde{\alpha}  R R^{-1} z_{l_1}^{\tilde{\mathcal{B}}_{I, I \cup \set{l_1}}} R \ldots R^{-1}
 z_{l_{n-|I|}}^{\tilde{\mathcal{B}}_{I, I \cup \set{l_{n-|I|}}}} 
 z_{m_1}^{\tilde{\mathcal{B}}_{I, I \setminus \set{m_1}}-\Id}  R \ldots R^{-1}  
 z_{m_{|I|}}^{\tilde{\mathcal{B}}_{I, I \setminus \set{m_{|I|}}}-\Id}  R  R^{-1} \tilde{v}_I = \\ &= \eta_I (\tilde{\alpha}) (\tilde{v}_I)  .
\end{align*}
Hence, our construction is independent of the choice of basis of $V_I$.

Now, we want to check that $\eta_I$ is injective. So assume that $\eta_I (\alpha)$ is the zero mapping. 
As $\partial_m$ acts bijective on $\Oh{I,0}$ for $m \in I$ (see \prettyref{lem:zj_dj_bij}), this is equivalent to
\begin{displaymath}
 \partial_{m_1}^{-1}  \ldots  \partial_{m_{|I|}}^{-1}  \, z_{l_1}^{\mathcal{B}^T_{I, I \cup \set{l_1}}} \cdot \ldots \cdot 
 z_{l_{n-|I|}}^{\mathcal{B}^T_{I, I \cup \set{l_{n-|I|}}}} \cdot
 z_{m_1}^{\mathcal{B}^T_{I, I \setminus \set{m_1}}-\Id} \cdot \ldots \cdot 
 z_{m_{|I|}}^{\mathcal{B}^T_{I, I \setminus \set{m_{|I|}}}-\Id} \cdot \alpha^T = \left( \begin{smallmatrix}
                                0 \\ \vdots \\ 0
                               \end{smallmatrix} \right) .
\end{displaymath}
The eigenvalues of $\mathcal{B}^T_{I, I \setminus \set{m}}-\Id$ are contained in $\Sigma$ for $m \in I$, as $\V$ is an object in $\Quis_n$. Using \prettyref{lem:z_r^A}, we receive $\alpha \equiv 
0$ and $\eta_I$ is injective.

As $ \dim_\C  \Hom_{\D_{0}}( (E\V)_0, \Oh{I,0}) \leq n_I$ by part (iv), we immediately receive the bijectivity of~$\eta_I$ as claimed.

\end{itemize}

Composing the isomorphism from part (i) with \prettyref{alg:ALG}, we receive a natural isomorphism from \eqref{eq:hom} into $\Hom_{\D_{0}}( (E\V)_0, 
\Oh{I,0})$. Composing this with the  isomorphism $\eta_I$ from $V_I^\ast$ into \eqref{eq:hom}, this gives us the canonical isomorphism
\begin{displaymath}
\mathfrak{a} \colon V_I^\ast \overset{\cong}{\longrightarrow} \Hom_{\D_{0}}( (E\V)_0, \Oh{I,0}) . \qedhere
\end{displaymath}
\end{proof}

\vskip 1em
The following statement on the matrix polynomial will be used in the proof of \prettyref{prop:thm_proof_prop_2}.
\pagebreak

\begin{cor} \label{cor:phi_A_psi_A}
Let $A$ denote a square matrix with entries in $\C$ and let $i \in \set{1, \ldots ,n}$. We fix a branch of the logarithm defined on $\C_i \setminus 
\R^+_0$ and let $z_i^A = \exp ( A \cdot \ln (z_i))$ as before. Set
\begin{displaymath}
 \varphi_A (z_i) \vcentcolon= \sum_{k=0}^\infty \frac{A^k}{(k+1)!}  \cdot
{\ln (z_i)^{k+1}} \ \ \text{ and } \ \ \psi(A)\vcentcolon= \sum_{k=1}^\infty \frac{(2\pi i)^k}{k!} A^{k-1}.
\end{displaymath}
Then  
\begin{displaymath}
M_i \varphi_A ( z_i) - \varphi_A (z_i) =  \psi (A) \cdot z_i^A .
\end{displaymath}
\end{cor}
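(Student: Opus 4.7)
The plan is to reduce the identity to a one-line algebraic manipulation by recognising both $\varphi_A$ and $\psi(A)$ as formal shifts of $A^{-1}(e^{\sbt}-I)$, and then to extend from invertible $A$ to general $A$ by continuity.

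First I would reindex via $m = k+1$ to rewrite $\varphi_A(z_i) = \sum_{m \geq 1}\tfrac{A^{m-1}}{m!}(\ln z_i)^m$ and, correspondingly, $\psi(A) = \sum_{m \geq 1}\tfrac{(2\pi i)^m}{m!}A^{m-1}$. When $A$ is invertible these collapse to
\begin{displaymath}
\varphi_A(z_i) = A^{-1}\bigl(z_i^A - I\bigr), \qquad \psi(A) = A^{-1}\bigl(e^{2\pi i A} - I\bigr).
\end{displaymath}
Next, since the chosen logarithm satisfies $M_i \ln(z_i) = \ln(z_i) + 2\pi i$ and $A$ commutes with $A\ln(z_i)$, we have $M_i z_i^A = e^{A(\ln z_i + 2\pi i)} = e^{2\pi i A}\cdot z_i^A$. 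For invertible $A$ this immediately gives
\begin{displaymath}
M_i \varphi_A(z_i) - \varphi_A(z_i) = A^{-1}\bigl(M_i z_i^A - z_i^A\bigr) = A^{-1}\bigl(e^{2\pi i A} - I\bigr) z_i^A = \psi(A)\cdot z_i^A,
\end{displaymath}
which is the claim in this case.

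For general $A \in \operatorname{Mat}_M(\C)$, I would conclude by density: the invertible matrices are Zariski-dense in $\operatorname{Mat}_M(\C)$, and for each fixed $z_i$ every entry of $M_i\varphi_A(z_i) - \varphi_A(z_i) - \psi(A)z_i^A$ is an entire function of the entries of $A$ (the series converge locally uniformly in $A$, and the individual powers of $\ln z_i$ appearing on either side have polynomial coefficients in $A$). An identity of entire functions that holds on a dense set holds everywhere, giving the equality for arbitrary $A$.

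The only real obstacle is the non-invertible case: if one prefers to avoid the density step, one can instead expand $(\ln z_i + 2\pi i)^{k+1}$ by the binomial theorem directly in the defining series for $M_i \varphi_A - \varphi_A$ and swap the order of summation; matching the resulting double sum with the Cauchy product of $\psi(A)$ and $z_i^A = \sum_{l\geq 0} A^l (\ln z_i)^l/l!$ then proves the identity coefficient-by-coefficient in $\ln z_i$ and monomial-by-monomial in $A$, sidestepping any invertibility assumption.
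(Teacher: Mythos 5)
Your argument is correct, and it handles the degenerate case by a genuinely different route from the paper. For invertible $A$ the two proofs amount to the same observation: the paper establishes $A\cdot\bigl(M_i\varphi_A - \varphi_A\bigr) = A\,\psi(A)\,z_i^A$ by direct computation and cancels the $A$, whereas you make the closed forms $\varphi_A(z_i) = A^{-1}\bigl(z_i^A - \operatorname{Id}\bigr)$ and $\psi(A) = A^{-1}\bigl(e^{2\pi i A}-\operatorname{Id}\bigr)$ explicit and use $M_i z_i^A = e^{2\pi i A}z_i^A$. Where you diverge is in passing to non-invertible $A$: the paper reduces to a single Jordan block with eigenvalue $0$, argues that the two-sided identity after multiplication by $A$ pins down both sides up to a single corner entry, and then computes that entry by hand to be $2\pi i$; you instead fix $z_i$, observe that every entry of $M_i\varphi_A(z_i)-\varphi_A(z_i)-\psi(A)z_i^A$ depends continuously (indeed holomorphically, by locally uniform convergence of the defining series) on the entries of $A$, and let the identity propagate from the dense open set of invertible matrices to all of $\operatorname{Mat}_M(\C)$ --- in fact continuity alone suffices here, so the appeal to entire functions is mild overkill. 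This buys you a shorter argument that avoids the somewhat delicate bookkeeping of which entries are left undetermined by the kernels of left and right multiplication by a nilpotent Jordan block. Your fallback --- expanding $(\ln z_i + 2\pi i)^{k+1}$ binomially and matching the resulting double sum against the Cauchy product of $\psi(A)$ with $z_i^A=\sum_{l\ge 0}A^l(\ln z_i)^l/l!$ --- is also correct and is arguably the cleanest option of all, since it treats every $A$ uniformly with no case distinction and no limiting process.
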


\begin{proof}
We have
\begin{displaymath}
 M_i \varphi_A (z_i) = \sum_{k=0}^\infty \frac{A^k}{(k+1)!}  \cdot
{(\ln (z_i)+2\pi i)^{k+1}} .
\end{displaymath}
Furthermore direct computation yields
\begin{align*}
A \cdot \left( M_i \varphi_A ( z_i) - \varphi_A (z_i) \right ) = \left(  M_i \varphi_A ( 
 z_i) - \varphi_A (z_i) \right) \cdot A=\psi (A) \cdot  z_i^A \cdot  A =   A \cdot \psi(A) \cdot z_i^A  .
\end{align*}
We may assume for our arguments that $A = J_a$ where $J_a$ is a single Jordan block with eigenvalue $a \in \C$. 
If $a \neq 0$, our claim follows directly. So assume $a=0$. The above equation shows us however that $M_i \varphi_A (  z_i) - \varphi_A (z_i)$ and 
$\psi (A) \cdot z_i^A$ coincide up to a possible difference in the entry in the upper-left corner. The entry of $M_i \varphi_A (  z_i) - \varphi_A (z_i)$ 
in the upper-left corner is~$\ln(z_i) + 2 \pi i - \ln(z_i) = 2 \pi i$. The first column of $z_i^A$ is $(1 , 0 , \ldots, 0)^T$ and the entry in the 
upper-left corner of $\psi (A)$ is $2 \pi i$. Hence, the entry in the upper-left corner of $ \psi (A) \cdot z_i^A$ is~$2 \pi i$ as well 
which proves the claim. \qedhere
\end{proof}

In the following we prove that the quiver representation one receives after applying~$\mathscr{A}$ to a quiver $\D$-module 
is determined in a simple manner by the starting quiver representation. To do so, we ``extend'' the canonical isomorphism $\mathfrak{a}$ from 
\prettyref{prop:thm_proof_prop_1} to the whole quiver~representation.

\begin{prop} \label{prop:thm_proof_prop_2}
Let $ \V_n = (V_I, B_{I \cup \set{i}, I}, B_{I, I \cup \set{i}})$ be an object in $\Quis_n$ and $E \V_n$~the corresponding quiver 
$\D$-module. The image of $E \V_n$ under the functor $\mathscr{A}$ is canonically isomorphic~to
\begin{displaymath}
\begin{tikzcd}[column sep=normal]
V_I^\ast  
\arrow[yshift=0.6ex, sloped]{r}[above]{u_{I,i}}  & 
V_{I \cup \set{i}}^\ast 
\arrow[yshift=-0.6ex, sloped]{l}[below, inner sep=0.8ex]{w_{I,i} }
\end{tikzcd} 
\end{displaymath}
where 
\begin{displaymath}
 u_{I,i} = B_{I,I \cup \set{i}}^\ast  \ \  \text{and} \ \ w_{I,i} = B_{I \cup 
\set{i},I}^\ast  \circ \sum_{k=1}^\infty \frac{(2 \pi i)^k}{k!} (B_{I,I \cup \set{i}}^\ast \circ B_{I \cup 
\set{i},I}^\ast )^{k-1}  .
\end{displaymath}
\end{prop}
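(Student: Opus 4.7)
The plan is to transport $\mathscr{A}(E\V_n)$ through the canonical isomorphism $\mathfrak{a}_I \colon V_I^\ast \xrightarrow{\cong} \Hom_{\D_{X,0}}((E\V_n)_0, \Oh{I,0})$ of \prettyref{prop:thm_proof_prop_1} for each $I$, and to verify that the conjugates of $\can_{I,i}$ and $\var_{I,i}$ are precisely $u_{I,i} = B_{I, I\cup\set{i}}^\ast$ and $w_{I,i}$ as stated. Recall that under $\mathfrak{a}_I$ an element $\alpha \in V_I^\ast$ corresponds to the $\D$-linear map whose $I$-component is $\phi^I_\alpha(v) = \alpha(\mathscr{F}_I v)$, where $\mathscr{F}_I$ is the fundamental matrix from the proof of \prettyref{prop:thm_proof_prop_1}; the remaining components $\phi^J_\alpha$ are determined by Algorithm~\prettyref{alg:ALG}.

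For $u_{I,i}$: starting from $\phi_\alpha$, the PDE $z_i \phi^{I\cup\set{i}} = \phi^I \circ B_{I, I\cup\set{i}}$, together with invertibility of $z_i$ in $\Oh{I,0}$ (since $i \notin I$), gives $\phi^{I\cup\set{i}}_\alpha(v) = z_i^{-1}\alpha(\mathscr{F}_I \, B_{I,I\cup\set{i}}\, v)$ in $\Oh{I,0}$. The three commutativity conditions in $\Qui_n$ imply for every factor of $\mathscr{F}_I$ an intertwining of the form $\mathcal{B}_{I,J} \circ B_{I,I\cup\set{i}} = B_{I,I\cup\set{i}} \circ \mathcal{B}_{I\cup\set{i},J'}$ with the appropriate $J'$; exponentiating, moving $B_{I,I\cup\set{i}}$ through $\mathscr{F}_I$ and absorbing $z_i^{-1}$ into the $i$-factor $z_i^{\mathcal{B}_{I\cup\set{i},I}}$ so that it becomes $z_i^{\mathcal{B}_{I\cup\set{i},I}-\Id}$ should recast the expression as $(\alpha \circ B_{I,I\cup\set{i}}) \cdot \mathscr{F}_{I\cup\set{i}}(v)$. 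This gives $\mathfrak{a}_{I\cup\set{i}}^{-1} \circ \can_{I,i} \circ \mathfrak{a}_I(\alpha) = B_{I,I\cup\set{i}}^\ast \alpha$.

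For $w_{I,i}$: given $\beta \in V_{I\cup\set{i}}^\ast$, lift $\phi^{I\cup\set{i}}_\beta(v) = \beta(\mathscr{F}_{I\cup\set{i}} v)$ to a representative in $\Oh{I,0}$. The PDE $\partial_i \phi^I = \phi^{I\cup\set{i}} \circ B_{I\cup\set{i},I}$ forces a primitive $\tilde\phi^I_\beta(v) = \partial_i^{-1}\beta(\mathscr{F}_{I\cup\set{i}} B_{I\cup\set{i},I} v)$, well-defined modulo $z_i$-independent functions, which are annihilated by $M_i - \Id$. The same intertwining technique recasts the integrand as $(\beta \circ B_{I\cup\set{i},I}) \cdot \mathscr{G}_I'' \cdot z_i^{\mathcal{B}_{I,I\cup\set{i}} - \Id}\, v$, where $\mathscr{G}_I''$ collects the $z_i$-independent factors of $\mathscr{F}_I$. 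Pulling $\mathscr{G}_I''$ out of $\partial_i^{-1}$ and applying \prettyref{cor:phi_A_psi_A} with $A = \mathcal{B}_{I,I\cup\set{i}}$, so that $(M_i - \Id)\partial_i^{-1} z_i^{A-\Id} = \psi(A)\, z_i^A$, yields $\var_{I,i}(\phi^{I\cup\set{i}}_\beta)(v) = (\beta \circ B_{I\cup\set{i},I}) \cdot \psi(\mathcal{B}_{I,I\cup\set{i}}) \cdot \mathscr{F}_I v$. Dualizing the identity $B_{I\cup\set{i},I} \circ \mathcal{B}_{I,I\cup\set{i}} = \mathcal{B}_{I\cup\set{i},I} \circ B_{I\cup\set{i},I}$ and extending to the power series $\psi$ gives $\psi(\mathcal{B}_{I,I\cup\set{i}}^\ast) \circ B_{I\cup\set{i},I}^\ast = B_{I\cup\set{i},I}^\ast \circ \psi(\mathcal{B}_{I\cup\set{i},I}^\ast) = B_{I\cup\set{i},I}^\ast \circ \psi(B_{I,I\cup\set{i}}^\ast \circ B_{I\cup\set{i},I}^\ast)$, which is exactly $w_{I,i}$.

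The main obstacle is the careful bookkeeping of the intertwining identities between $\mathscr{F}_I$ and $\mathscr{F}_{I\cup\set{i}}$ across $B_{I,I\cup\set{i}}$ and $B_{I\cup\set{i},I}$, and the verification that $\psi(\mathcal{B}_{I,I\cup\set{i}})$ may be freely commuted past the other factors of $\mathscr{F}_I$ when recombining with $z_i^{\mathcal{B}_{I,I\cup\set{i}}}$. Both of these reduce to the pairwise commutativity of the endomorphisms $\mathcal{B}_{I,\sbt}$ of $V_I$ (and analogously on $V_{I\cup\set{i}}$), which follows from the three commutativity conditions defining $\Qui_n$ in the same fashion as in the proof of \prettyref{thm:C_equivalent_Quis}(iii).
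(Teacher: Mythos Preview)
Your proposal is correct and follows essentially the same route as the paper: transport $\can$ and $\var$ through the isomorphisms $\mathfrak a_I$ of \prettyref{prop:thm_proof_prop_1}, use the relation $z_i\phi^{I\cup\{i\}}=\phi^I\circ B_{I,I\cup\{i\}}$ (resp.\ $\partial_i\phi^I=\phi^{I\cup\{i\}}\circ B_{I\cup\{i\},I}$), push $B_{I,I\cup\{i\}}$ (resp.\ $B_{I\cup\{i\},I}$) through $\mathscr F$ via the intertwining identities, and invoke \prettyref{cor:phi_A_psi_A} for the variation. The only difference is organisational: the paper first argues (its step~(ii)) that it suffices to track the pair $(\phi^I,\phi^{I\cup\{i\}})$ and then verifies both components for $\can$ and for $\var$, whereas you check only the ``diagonal'' component in each case---which is enough, since $\eta_{I\cup\{i\}}$ (resp.\ $\eta_I$) is an isomorphism and hence that single component already determines the full solution.
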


\begin{proof}
Let $n_I = \dim_\C V_I$ as before. Set $\mathcal{B}_{K,L} \vcentcolon = B_{K,L} \circ B_{L,K}$ if $ K, L \in \mathcal{P}(\set{1, \ldots, n})$ are adjacent, 
i.\,e.\ $K=L \cup \set{l}$ or $L= K \cup \set{k}$.  The image of $(V_I, B_{I \cup \set{i}, I}, B_{I, I \cup \set{i}})$ under 
$\mathcal{A} \circ E$ is given by  $$(\Hom_{\D_{0}}( (E \V_n)_0, \Oh{I,0}) , \can_{I,i}, \var_{I,i}).$$ 
First, we reperform the first steps of the proof of \prettyref{prop:thm_proof_prop_1}. Then we compute $\can$ and~$\var$.
\begin{itemize}
 \item [(i)] First, note that the natural isomorphism we gave for $\Hom_{\D_{0}}( (E \V_n)_0, \Oh{I,0})$ in part~(i) of the proof of 
\prettyref{prop:thm_proof_prop_1} is compatible with the canonical map and the variation. 
Therefore, 
it extends to the entire object $(\Hom_{\D_{0}}( (E \V_n)_0, \Oh{I,0}) , \can_{I,i}, \var_{I,i})$. We receive:
\begin{gather*}
  \xymatrix@1@C=3em{ \Hom_{\D_{0}}( (E \V_n)_0, \Oh{I,0}) 
\ar@<0.7ex>[r]^-{\can_{I,i}}  & \Hom_{\D_{0}}( (E \V_n)_0, \Oh{I \cup \set{i},0}) \ar@<0.7ex>[l]^-{\var_{I,i}}}  \cong \\
\begin{aligned}
  \biggl\{ & \textstyle \bigoplus\limits_{J} \displaystyle \phi^J_I \in \textstyle \bigoplus\limits_{J} \displaystyle \Hom_{\C}  \left(   V_J , 
\Oh{I,0} \right) 
\, \Big| \, \partial_j \cdot \phi_I^J(  v_J )- 
\phi_I^{J \cup \set{j}} ( B_{J \cup \set{j}, J}(v_J)) = 0, \\
&  z_j \cdot \phi_I^{J \cup \set{j}} ( v_{J \cup \set{j}}) - \phi_I^J (
 B_{J, J \cup \set{j}} (v_{J \cup \set{j}}) ) =0  \text{ for } J \neq \set{1, \ldots, n}, \, j \in \set{1, \ldots, n} \setminus J \biggr\} \end{aligned} \\
\begin{aligned}
   \mathop{\oplus}_J {\can^J_{I,i}} \ \raisebox{1.8ex}{\text{\rotatebox{-90}{$\longrightarrow$}}} \ \  \ 
\raisebox{-1.8ex}{\text{\rotatebox{90}{$\longrightarrow$}}} \ \mathop{\oplus}_J \var^J_{I,i} \end{aligned} \\
\begin{aligned}
   \biggl\{& \textstyle \bigoplus\limits_{J} \displaystyle \phi^J_{I \cup \set{i}} \in \textstyle \bigoplus\limits_{J} \displaystyle \Hom_{\C}  
\left(   V_J , \Oh{{I \cup \set{i}},0} \right) 
\, \Big| \,  \partial_j \cdot \phi_{I \cup \set{i}}^J(  v_J )- 
\phi_{I \cup \set{i}}^{J \cup \set{j}} ( B_{J \cup \set{j}, J}(v_J)) = 0,  \\
&    z_j \cdot \phi_{I \cup \set{i}}^{J \cup \set{j}} ( v_{J \cup \set{j}}) - \phi_{I \cup \set{i}}^J (
 B_{J, J \cup \set{j}} (v_{J \cup \set{j}}) ) =0  \text{ for } J \neq \set{1, \ldots, n}, \, j \in \set{1, \ldots, n} \setminus J \biggr\} 
\end{aligned}
\end{gather*}
\item [(ii)]Let us fix~$I \in \mathcal{P} ( \set{1, \ldots, n}) \setminus 
\set{1, \ldots, n}$, $i \in \set{1, \ldots, n} \setminus I$ temporarily. 
We may consider only the behaviour under the canonical map and the variation of the two pairs
\begin{displaymath}
 \begin{pmatrix}
\phi_I^I \\[1ex] \phi_I^{I \cup \set{i}} 
 \end{pmatrix}
\begin{matrix} \leftrightarrow  \\[1ex] \leftrightarrow \end{matrix}
\begin{pmatrix}
                 \phi_{I \cup \set{i}}^I \\[1ex] \phi_{I \cup \set{i}}^{I \cup \set{i}}
                \end{pmatrix} .
\end{displaymath}%
For any $K \in \mathcal{P} ( \set{1, 
\ldots ,n})$, $\phi_I^K \leftrightarrow \phi_{I \cup \set{i}}^K$ will follow their behaviour under these two maps. This can be seen by 
adapting \prettyref{alg:ALG} in  the following way: 
We only use equations of~\eqref{eq:PDE_1} which 
involve $z_j$ for $j \notin I \cup \set{i}$, or $\partial_k$ for $k \in I$. That way we express $\phi_I^K$ in terms of $\phi_I^I$ if $i \notin K$, 
or in terms of $\phi_I^{I \cup \set{i}}$ if $i \in K$.  
This expression is unique with the same arguments as for \prettyref{alg:ALG}. One observes that $\phi_I^K$, $\phi_{I \cup \set{i}}^K$ are build up from 
$\phi_I^I$, $\phi_{I \cup \set{i}}^I$ if $i \notin K$ (from $\phi_I^{I \cup \set{i}}$, $\phi_{I \cup \set{i}}^{I \cup \set{i}}$ if $i \in K$) in a 
completely identical manner. This ensures that they behave in the same way under the canonical map and the variation.
 \item [(iii)] Using the isomorphisms~$\eta_I$ and $\eta_{I \cup \set{i}}$ from part (iv) of the proof of \prettyref{prop:thm_proof_prop_1}, we can uniquely identify $\phi^I_I$ and 
$\phi^{I \cup \set{i}}_{I \cup \set{i}}$ with elements of $V_I^\ast$ and $V_{I \cup \set{i}}^\ast$, respectively. After a choice of basis of $V_I$ and 
$V_{I \cup \set{i}}$, we may write for some $\alpha_I \in V_I^\ast$ and $\alpha_{I \cup \set{i}} \in V_{I \cup \set{i}}^\ast$ (we omit set braces for 
singletons in the following)
\begin{gather*}
 \begin{aligned}%
\phi_I^I &= \eta_I ( \alpha_I)= \alpha_I \cdot \mathscr{F}_I   &   \phi_{I \cup i}^I &= \partial_i^{-1} \cdot  \phi^{I \cup i }_{I 
\cup i} \cdot B_{I \cup i, I}  \\
 \phi_I^{I \cup i} &= z_i^{-1} \cdot  \phi_I^I \cdot B_{I, I \cup i}  
 & \phi^{I \cup i}_{I \cup i} &= \eta_{I \cup i} (\alpha_{I \cup i}) = \alpha_{I \cup i} \cdot \mathscr{F}_{I \cup i}
 \end{aligned}\\
\begin{aligned}
 \mathscr{F}_I &= z_{i}^{\mathcal{B}_{I, I \cup i}} \cdot z_{l_{2}}^{\mathcal{B}_{I, I \cup l_{2}}} \cdot \ldots \cdot 
 z_{l_{n-|I|}}^{\mathcal{B}_{I, I \cup l_{n-|I|}}} \cdot
 z_{m_1}^{\mathcal{B}_{I, I \setminus m_1}-\Id} \cdot \ldots \cdot 
 z_{m_{|I|}}^{\mathcal{B}_{I, I \setminus m_{|I|}}-\Id}\\
 \mathscr{F}_{I \cup i} &= z_{l_2}^{\mathcal{B}_{I \cup i, I \cup \set{i, l_2}}} \cdot \ldots \cdot 
 z_{l_{n-|I|}}^{\mathcal{B}_{I \cup i, I \cup \set{i ,l_{n-|I|}}}} \cdot 
 z_{i}^{\mathcal{B}_{ I \cup i, I}- \Id} \cdot
 z_{m_1}^{\mathcal{B}_{I \cup i, \set{ I \cup i} \setminus m_1}-\Id} \cdot \ldots \cdot 
 z_{m_{|I|}}^{\mathcal{B}_{I \cup i, \set{I \cup i} \setminus m_{|I|}}-\Id} 
\end{aligned}
\end{gather*}
where $\set{i,l_2,  \ldots, l_{n-|I|} } = \set{1 , \ldots, n} \setminus I$, $\set{m_1, \ldots, m_{|I|}} = I$. 
This description of~$\phi^\sbt_\sbt$ is independent of the choice of basis as we showed in part (iv) of the proof 
of \prettyref{prop:thm_proof_prop_1}.

 Let us give some helpful identities for the computations.  
We have for $i,l \notin I$, $l \neq i$, 
$m \in I$:
\begin{align*}
 B_{I , I \cup i} \cdot \mathcal{B}_{I \cup i, \set{I \cup i} \setminus m} &= \mathcal{B}_{I, I \setminus m} \cdot B_{I , I \cup i} \ & 
 B_{ I \cup i, I} \cdot \mathcal{B}_{I , I \cup l} &=  \mathcal{B}_{I \cup i, I \cup \set{i,l}} \cdot  B_{ I \cup i, I} \\
  B_{I, I \cup i} \cdot  
\mathcal{B}_{I \cup i, I \cup \set{i,l}} &= \mathcal{B}_{I, I \cup l} \cdot  B_{I, I \cup i}   &    
 B_{I \cup i, I} \cdot \mathcal{B}_{I, I \setminus m} &= \mathcal{B}_{I \cup i, \set{I \cup i} \setminus m} \cdot B_{I \cup i, I} 
\end{align*}
 \item [(iv)] We claimed that the canonical map from $ \begin{pmatrix}
\phi_I^I \\[1ex] \phi_I^{I \cup i} 
 \end{pmatrix}$ to $\begin{pmatrix}
                 \phi_{I \cup i}^I \\[1ex] \phi_{I \cup i}^{I \cup i}
                \end{pmatrix}$ is given by $B_{I,I \cup i}^\ast$. This means we have to check that the assignment
 \begin{displaymath}
  \alpha_I \mapsto \alpha_{I \cup i} \vcentcolon = \alpha_I \cdot B_{I , I \cup i} 
 \end{displaymath}
describes the canonical map. This follows by direct computations:
\begin{align*}
&\eta_{I \cup i} ( \alpha_I \cdot B_{I , I \cup i}) =  \alpha_I \cdot B_{I , I \cup i} \cdot \mathscr{F}_{I \cup i}  = 
  z_i^{-1} \cdot \alpha_I \cdot \mathscr{F}_I \cdot B_{I,I \cup i} = z_i^{-1} \cdot \phi_I^I \cdot B_{I,I \cup i} = \phi_I^{I \cup i}
\end{align*}
and therefore
\begin{align*}
 \partial_i^{-1} & \cdot \eta_{I \cup i} ( \alpha_I \cdot B_{I , I \cup i})  \cdot B_{I \cup i, I} = 
\partial_i^{-1} z_i^{-1} \cdot \alpha_I \cdot \mathscr{F}_I \cdot B_{I,I \cup i}  \cdot B_{I \cup i, I} = 
\alpha_I \cdot  \mathscr{F}_I   = \phi_I^I 
\end{align*}
With the same arguments as before, one can show that the 
description of the canonical map is independent of the choice of basis.
 \item [(v)] We are left with the computation of the variation 
$M_i  \phi_{I \cup i}^I - \phi_{I \cup i}^I$ and $ M_i \phi_{I \cup i}^{I \cup i} - \phi_{I \cup i}^{I \cup i}$. 
In particular, we need to check that the assignment
\begin{displaymath}
 \alpha_{I \cup i} \mapsto \alpha_I \vcentcolon = \alpha_{I \cup i} \cdot \biggl(\sum_{k=1}^\infty \frac{(2 \pi i)^k}{k!} ( B_{I \cup 
i,I}  B_{I,I \cup \set{i}})^{k-1} \! \biggr)  \cdot B_{I \cup 
i,I}   = \alpha_{I \cup i} \, \cdot \, \underbrace{ \psi(\mathcal{B}_{I \cup 
i,I}  ) \! \cdot \! B_{I \cup 
i,I} }_{\mathclap{ =\vcentcolon \, \Theta_{I,i}}}  
\end{displaymath}
describes the variation. For $\phi_{I \cup i}^{I \cup i}$ the correctness follows by direct computation:
\begin{align*} 
&M_i \phi_{I \cup i}^{I \cup i} -  \phi_{I \cup i}^{I \cup i} = 
  \alpha_{I \cup i} \cdot (e^{2 \pi i \mathcal{B}_{ I \cup i, I}}  - \Id) \cdot  \mathscr{F}_{I \cup i}   =  
  \alpha_{I \cup i} \cdot \Theta_{I,i} \cdot B_{I, I \cup i} \cdot  
\mathscr{F}_{I \cup i}   =\\
&= z_i^{-1}  \cdot \alpha_{I  \cup i } \cdot \Theta_{I,i} \cdot \mathscr{F}_I \cdot B_{I, I \cup i} = z_i^{-1} \cdot \eta_{I} ( \alpha_{I  \cup i } \cdot 
\Theta_{I,i}) \cdot B_{I, I \cup i}
\end{align*}
Now, let us compute $M_i  \phi_{I \cup i}^I - \phi_{I \cup i}^I$:
We use the identity $ \mathscr{F}_{I \cup i } \cdot B_{I \cup i, I}= 
 z_i^{-1} \cdot B_{I \cup i, I} \cdot \mathscr{F}_I$
to rearrange $\phi_{I \cup i}^I$. We receive 
\begin{gather*}
 \phi_{I \cup i}^I  
= \alpha_{I \cup i }  \, B_{I \cup i, I} \! \cdot \! \left(  \partial_i^{-1} z_i^{-1} z_{i}^{\mathcal{B}_{I, I \cup i}} \right)  \! \cdot \!
z_{l_{2}}^{\mathcal{B}_{I, I \cup l_{2}}} \cdot 
\ldots \cdot 
 z_{l_{n-|I|}}^{\mathcal{B}_{I, I \cup l_{n-|I|}}} \cdot
 z_{m_1}^{\mathcal{B}_{I, I \setminus m_1}-\Id} \cdot \ldots \cdot 
 z_{m_{|I|}}^{\mathcal{B}_{I, I \setminus m_{|I|}}-\Id} \\
\text{where } \displaystyle \ \ \partial_i^{-1} z_i^{-1} z_{i}^{\mathcal{B}_{I, I \cup i}} = 
\displaystyle \sum_{k=0}^\infty \frac{\mathcal{B}_{I, I \cup i}^k}{k!}  
\frac{\ln(z_i)^{k+1}}{k+1}  =\vcentcolon \varphi_{\mathcal{B}_{I, I \cup i}} (z_i) .
\end{gather*}
\prettyref{cor:phi_A_psi_A} yields
\begin{displaymath}
 M_i \varphi_{\mathcal{B}_{I, I \cup i}} (z_i) - \varphi_{\mathcal{B}_{I, I \cup i}} (z_i) = \biggl(
  \sum_{k=1}^\infty \frac{(2\pi 
i)^k}{k!} \cdot {\mathcal{B}^{k-1}_{I, I \cup i}} \biggr) \cdot z_i^{\mathcal{B}_{I, I \cup i}} .
\end{displaymath}
This gives us 
\begin{align*}
 &M_i  \phi_{I \cup i}^I - \phi_{I \cup i}^I = \\
 &=  \alpha_{I \cup i }  B_{I \cup i, I}     \left( \sum_{k=1}^\infty \frac{(2\pi i)^k}{k!} 
\cdot {\mathcal{B}^{k-1}_{I, I \cup i}} \right) z_i^{\mathcal{B}_{I, I \cup i}}  z_{l_{2}}^{\mathcal{B}_{I, I \cup l_{2}}} 
\ldots 
 z_{l_{n-|I|}}^{\mathcal{B}_{I, I \cup l_{n-|I|}}} 
 z_{m_1}^{\mathcal{B}_{I, I \setminus m_1}-\Id}  \ldots 
 z_{m_{|I|}}^{\mathcal{B}_{I, I \setminus m_{|I|}}-\Id}   = \\
 & = \alpha_{I \cup i } \cdot \Theta_{I,i} \cdot \mathscr{F}_I = \eta_{I } ( \alpha_{I \cup i} \cdot \Theta_{I,i}) .
\end{align*}
Once more, note that these computations are independent of the choice of basis.\qedhere
\end{itemize}
\end{proof}

Now, we have collected all the important pieces for the proof of our Main \prettyref{thm:main_theorem}:
\begin{proof}[Proof of \prettyref{thm:main_theorem}]
We need to examine if the family of isomorphisms from \prettyref{prop:thm_proof_prop_2} is natural: 
The isomorphism 
we gave in part (i) of the proof of \prettyref{prop:thm_proof_prop_2} is natural. So let $ \V = 
(V_J, B_{J \cup 
\set{j}, J}, B_{J, J \cup \set{j}})$ and \mbox{$ \tilde{\V} = (\tilde{V}_J, \tilde{B}_{J \cup \set{j}, J}, \tilde{B}_{J, J \cup 
\set{j}})$} denote two objects in $\Quis_n$ and let $\tau= (h_J)$ denote a morphism from $\V$ to $\tilde{\V}$. We need to check that the diagram

\begin{center}
\begin{tikzpicture}[baseline, >=to, font=\footnotesize]
 \node[matrix] (m) [matrix of math nodes, row sep=1.7em]
    { \{ \oplus_J \phi_I^J \in \oplus_J \Hom_\C ( V_J, \Oh{I,0}) \mid \ldots \}  \\
      \{ \oplus_J \phi_{I \cup \set{i}}^J \in \oplus_J \Hom_\C ( V_J, \Oh{{I \cup \set{i}},0}) \mid \ldots \}   \\ };
\node[matrix] (p) [matrix of math nodes, base right=7.8em of m, row sep=1.7em]
    {  \{ \oplus_J \tilde{\phi}_I^J \in \oplus_J \Hom_\C ( \widetilde{V}_J, \Oh{I,0}) \mid \ldots \} \\
       \{ \oplus_J \tilde{\phi}_{I \cup \set{i}}^J \in \oplus_J \Hom_\C ( \widetilde{V}_J, \Oh{{I \cup \set{i}},0}) \mid \ldots \}  \\ };
\node[matrix] (n) [matrix of math nodes, below=4em of m, row sep=1.7em]
    {  \Hom_\C ( V_I, \C)  \\
      \Hom_\C ( V_{I \cup \set{i}}, \C)  \\ };
\node[matrix] (o) [matrix of math nodes,  below=3.7em of p, row sep=1.7em]
    {  \Hom_\C ( \widetilde{V}_I, \C)  \\
      \Hom_\C ( \widetilde{V}_{I \cup \set{i}}, \C)  \\ };
\draw (m.east |- p.west) edge [<-] node [above, font=\normalsize] {$(\Hom_\C (\tau, \Oh{I,0}))$} (p.west);
\draw[->] (n.north)--(m.south);
\draw ([xshift=5.5em]n.east |- o.west) edge [<-] node [above, font=\normalsize]  {$(h_I^\ast)$} ([xshift=-1em]o.west);
\draw[->] (o.north)--(p.south);
\draw[transform canvas={xshift=-0.5em}, ->]  (m-1-1) -- (m-2-1) node[midway,left] {$\oplus_J \can^J_{I,i}$};
\draw[transform canvas={xshift=0.5em}, ->]  (m-2-1) -- (m-1-1) node[midway,right] {$\oplus_J \var^J_{I,i}$};
\draw[transform canvas={xshift=-0.5em}, ->]  (p-1-1) -- (p-2-1) node[midway,left] {$\oplus_J \widetilde{\can}^J_{I,i}$};
\draw[transform canvas={xshift=0.5em}, ->]  (p-2-1) -- (p-1-1) node[midway,right] {$\oplus_J \widetilde{\var}^J_{I,i}$};
\draw[transform canvas={xshift=-0.5em}, ->]  (n-1-1) -- (n-2-1) node[midway,left] {$B_{I,I \cup \set{i}}^\ast$};
\draw[transform canvas={xshift=0.5em}, ->]  (n-2-1) -- (n-1-1) node[midway,right] {$ \psi ( \mathcal{B}^\ast_{I,I \cup \set{i}}) \circ B_{I \cup \set{i}, I}^\ast $};
\draw[transform canvas={xshift=-0.5em}, ->]  (o-1-1) -- (o-2-1) node[midway,left] {$\tilde{B}_{I,I \cup \set{i}}^\ast$};
\draw[transform canvas={xshift=0.5em}, ->]  (o-2-1) -- (o-1-1) node[midway,right] {$\psi ( \tilde{\mathcal{B}}^\ast_{I,I \cup \set{i}}) \circ \tilde{B}_{I \cup 
\set{i}, I}^\ast $};
\end{tikzpicture}
\end{center}
commutes. The properties indicated by ``$\ldots$'' may be found in part (i) of the proof of \prettyref{prop:thm_proof_prop_2}. The morphisms in the 
horizontal rows are given by
\begin{align*}
h_I^\ast \colon \Hom_\C (\widetilde{V}_I, \C) &\rightarrow \Hom_\C ({V}_I, \C) \, , \ \ \ & \tilde{\alpha}_I &\rightarrow \tilde{\alpha}_I \circ h_I \\
\Hom_\C ( (h_J), \Oh{I,0} ) \colon \textstyle \bigoplus\limits_J \displaystyle  \Hom_\C ( \widetilde{V}_J, \Oh{I,0}) &\rightarrow \textstyle 
\bigoplus\limits_J \displaystyle \Hom_\C ( V_J, \Oh{I,0}) \,  , \ \ \ & \oplus_J \tilde{\alpha}^J_I &\rightarrow \oplus_J ( \tilde{\alpha}^J_I \circ h_J ) .
\end{align*}
The isomorphisms from the lower row into the upper row are given by 
\prettyref{alg:ALG} composed with $(\eta_I)$ and $(\tilde{\eta}_I)$, respectively. The commutativity of the diagram follows now easily using the 
commutativity conditions of the morphism $(h_I)$ with the $B_{\sbt, \sbt}$ and 
$\tilde{B}_{\sbt, \sbt}$-maps. Hence, the diagram of \prettyref{thm:main_theorem} commutes up to a natural isomorphism. The remaining claims 
follow directly from that.
\end{proof}

\addcontentsline{toc}{section}{References}

\vspace{1em}

\rule{100pt}{0.5pt}

\vspace{1em}

\textsc{Stephanie Zapf:} \begin{minipage}[t]{0.79\textwidth} \textit{Lehrstuhl für Algebra und Zahlentheorie\\ Universitätsstraße 14\\ 86159 Augsburg\\
Deutschland}\\
E-mail: \href{mailto:stephanie.zapf@math.uni-augsburg.de}{stephanie.zapf@math.uni-augsburg.de} \hfill \end{minipage}

\begin{thebibliography}{}

\bibitem[Bj{\"o}93]{Bjoerk}
J.-E. Bj{\"o}rk.
\newblock {\em Analytic {$\mathscr{D}$}-modules and applications}, volume 247
  of {\em Mathematics and its applications}.
\newblock Kluwer Academic Publishers, Dordrecht, 1993.

\bibitem[Dim04]{Dimca}
A.~Dimca.
\newblock {\em Sheaves in topology}.
\newblock Universitext. Springer-Verlag, Berlin, 2004.

\bibitem[GGM85a]{GGM1}
A.~Galligo, M.~Granger, and Ph. Maisonobe.
\newblock {${\mathscr{D}}$}-modules et faisceaux pervers dont le support
  singulier est un croisement normal.
\newblock {\em Ann. Inst. Fourier (Grenoble)}, 35(1):1--48, 1985.

\bibitem[GGM85b]{GGM2}
A.~Galligo, M.~Granger, and Ph. Maisonobe.
\newblock {${\mathscr{D}}$}-modules et faisceaux pervers dont le support
  singulier est un croisement normal. {II}.
\newblock {\em Ast\'erisque}, {}(130):240--259, 1985.

\bibitem[GM93]{GM-basiccourse}
M.~Granger and Ph. Maisonobe.
\newblock A basic course on differential modules.
\newblock In {\em \'{E}l\'ements de la th\'eorie des syst\`emes
  diff\'erentiels. {$\mathscr{D}$}-modules coh\'erents et holonomes}, volume~45
  of {\em Travaux en Cours}, pages 103--168. Hermann, Paris, 1993.

\bibitem[HJ91]{Horn-Johnson}
R.~A. Horn and C.~R. Johnson.
\newblock {\em Topics in matrix analysis}.
\newblock Cambridge University Press, Cambridge, 1991.

\bibitem[Kas84]{Kashiwara-RH}
M.~Kashiwara.
\newblock The {R}iemann-{H}ilbert problem for holonomic systems.
\newblock {\em Publ. Res. Inst. Math. Sci.}, 20(2):319--365, 1984.

\bibitem[Kas03]{Kashiwara-microlocalcalc}
M.~Kashiwara.
\newblock {\em {$D$}-modules and microlocal calculus}, volume 217 of {\em
  Translations of mathematical monographs}.
\newblock American Mathematical Society, Providence, R.I., 2003.

\bibitem[KV06]{KV}
S.~Khoroshkin and A.~Varchenko.
\newblock Quiver {$D$}-modules and homology of local systems over an
  arrangement of hyperplanes.
\newblock {\em IMRP Int. Math. Res. Pap.}, 2006:1--116, 2006.
\newblock Art. ID 69590.

\bibitem[Mal91]{Malgrange}
B.~Malgrange.
\newblock {\em Equations diff\'erentielles \`a coefficients polynomiaux},
  volume~96 of {\em Progress in mathematics}.
\newblock Birkh\"auser, Boston, Mass., 1991.

\bibitem[Meb84]{Mebkhout-RH}
Z.~Mebkhout.
\newblock {Une \'equivalence de cat\'egories}, {Une autre \'equivalence de
  cat\'egories}.
\newblock {\em Compos. Math.}, 51(1):51--62, 63--88, 1984.

\bibitem[Ser75]{Serre}
J.-P. Serre.
\newblock {\em Alg\`ebre locale, multiplicit\'es}, volume~11 of {\em Lecture
  notes in mathematics}.
\newblock Springer-Verlag, Berlin, 3rd edition, 1975.

\end{thebibliography}
\end{document}